\pgfplotsset{compat=1.18}
\newcommand{\Mod}[1]{\ (\mathrm{mod}\ #1)}
\definecolor{uuuuuu}{rgb}{0.27,0.27,0.27}
\definecolor{sqsqsq}{rgb}{0.1255,0.1255,0.1255}
\newtheorem{definition}{Definition} [section]
\newtheorem{theorem}[definition]{Theorem}
\newtheorem{lemma}[definition]{Lemma}
\newtheorem{proposition}[definition]{Proposition}
\newtheorem{claim}[definition]{Claim}
\newtheorem{fact}[definition]{Fact}
\theoremstyle{remark}
\def\qed{\hfill \rule{4pt}{7pt}}
\newcommand{\CF}{\mathcal{C}} 
\newcommand{\CC}{\mathcal{C}'} 
\newcommand{\heavy}{h} 
\newcommand{\C}[1]{\mathcal{#1}}
\newcommand{\I}[1]{{\mathbbm #1}}
\newcommand{\eval}[1]{[\![#1]\!]} 
\newcommand{\ra}[1]{\cite[#1]{Razborov10}}
\newcommand{\e}{\varepsilon}
\newcommand{\hide}[1]{}
\begin{document}
\title{\bf\Large The Tur\'{a}n density of short tight cycles}
\author{Levente Bodn\'ar}
\author{Jared Le\'on}
\author{Xizhi Liu}
\author{Oleg Pikhurko}
\affil{Mathematics Institute and DIMAP,
             University of Warwick,
             Coventry, 
             UK
}
\date{\today}
\maketitle
\begin{abstract}
The $3$-uniform \textbf{tight $\ell$-cycle} $C_\ell^{3}$ is the $3$-graph on $\{1,\dots,\ell\}$ consisting of all $\ell$ consecutive triples in the cyclic order. Let 
$\CF$ be either the pair $\{C_{4}^{3}, C_{5}^{3}\}$ or the single tight $\ell$-cycle  $C_{\ell}^{3}$ for some $\ell\ge 7$ not divisible by~$3$. 

We show that the \textbf{Tur\'an density} of $\CF$, that is, the asymptotically maximal edge density of a large $\CF$-free $3$-graph, is equal to $2\sqrt{3} - 3$.  We also establish the corresponding Erd\H{o}s--Simonovits-type stability result, informally stating that all almost maximum $\CF$-free graphs are close in the edit distance to a 2-part recursive construction.  This extends the earlier analogous results of Kam{\v c}ev--Letzter--Pokrovskiy~[%
\emph{``The Tur\'an density of tight cycles in three-uniform hypergraphs",}
Int.\ Math.\ Res.\ Not.\ 6 (2024), 4804–4841]
that apply for sufficiently large $\ell$ only.  

Additionally, we prove a finer structural result that allows us to determine the maximum number of edges in a $\{C_{4}^{3}, C_{5}^{3}\}$-free $3$-graph with a given number of vertices up to an additive $O(1)$ error term.
\end{abstract}


\section{Introduction}\label{SEC:Intorduction}
Given an integer $r\ge 2$, an \textbf{$r$-uniform hypergraph} (henceforth an \textbf{$r$-graph}) $\mathcal{H}$ is a collection of $r$-subsets of some set $V$. We call $V$ the \textbf{vertex set} of $\C H$ and denote it by $V(\C H)$. When $V$ is understood, we usually identify a hypergraph $\mathcal{H}$ with its set of edges. 
For a subset $S \subseteq V(\mathcal{H})$, we use $\mathcal{H}[S]$ to denote the \textbf{induced subgraph} of $\mathcal{H}$ on $S$ which consists of all edges $e\in\mathcal{H}$ with $e\subseteq S$.

Given a family $\mathcal{F}$ of $r$-graphs, we say an $r$-graph $\mathcal{H}$ is \textbf{$\mathcal{F}$-free}
if it does not contain any member of $\mathcal{F}$ as a subgraph.
The \textbf{Tur\'{a}n number} $\mathrm{ex}(n, \mathcal{F})$ of $\mathcal{F}$ is the maximum number of edges in an $\mathcal{F}$-free $r$-graph on $n$ vertices. 
The \textbf{Tur\'{a}n density} of $\mathcal{F}$ is defined as $\pi(\mathcal{F})\coloneq \lim_{n\to\infty}\mathrm{ex}(n,\mathcal{F})/\binom nr$. 
The existence of this limit follows from the simple averaging argument of Katona--Nemetz--Simonovits~\cite{KNS64} which shows that $\mathrm{ex}(n,\mathcal{F})/\binom{n}{r}$ is non-increasing in $n$.

For $r=2$, the value $\pi(\mathcal{F})$ is well understood thanks to the classical work of Erd\H{o}s--Stone~\cite{ES46} (see also~\cite{ES66}), which extends Tur\'{a}n's seminal theorem~\cite{Tur41}.
For $r \ge 3$, determining $\pi(\mathcal{F})$ is notoriously difficult in general, despite significant effort devoted to this area. 
For results up to~2011, we refer the reader to the excellent survey by Keevash~\cite{Kee11}.

In this work, we focus on the Tur\'{a}n density of $3$-uniform tight cycles. 
For an integer $\ell \ge 4$, the \textbf{tight $\ell$-cycle} $C_{\ell}^{3}$ is the $3$-graph on $[\ell]\coloneqq \{1,\dots,\ell\}$ with edge set 
\begin{align*}
    \big\{\,\{1,2,3\}, \{2,3,4\}, \cdots, \{\ell-2,\ell-1,\ell\},\{\ell-1,\ell,1\},\{\ell,1,2\}\,\big\},
\end{align*}
which consists of all consecutive triples in the cyclic order on $[\ell]$.
The \textbf{tight $\ell$-cycle minus one edge} $C_{\ell}^{3-}$ is the $3$-graph 
obtained from  $C_{\ell}^{3}$ by removing one edge. 
Note that $C_{4}^{3}$ is the complete $3$-graph on $4$ vertices, also denoted by $K_{4}^{3}$. 

If $\ell \equiv 0 \Mod{3}$ (i.e.\ $\ell$ is divisible by $3$) then  both $C_{\ell}^{3}$ and $C_{\ell}^{3-}$ are $3$-partite and thus it holds that $\pi(C_{\ell}^{3}) = \pi(C_{\ell}^{3-})=0$ by the classical general result of Erd\H{o}s~\cite{Erd64KST}.
Very recently, using a sophisticated stability argument combined with results from digraph theory, Kam{\v c}ev--Letzter--Pokrovskiy~\cite{KLP24} proved that $\pi(C_{\ell}^{3}) = 2\sqrt{3}-3$ for all sufficiently large $\ell$ satisfying $\ell \not\equiv 0 \Mod{3}$. 
Later, being partially inspired by~\cite{KLP24}, Balogh--Luo~\cite{BL24C5Minus} proved that $\pi(C_{\ell}^{3-}) = 1/4$ for all sufficiently large $\ell$ satisfying $\ell \not\equiv 0 \Mod{3}$. 
Very recently, Lidick\'y--Mattes--Pfender~\cite{LMP24} and, independently, the authors of the present work~\cite{BLLP24} proved that $\pi(C_{\ell}^{3-}) = 1/4$ for every integer $\ell \ge 5$ satisfying $\ell \not\equiv 0 \Mod{3}$. 

We call a $3$-graph $\mathcal{H}$ \textbf{semi-bipartite} if there exists a partition $V_1 \cup V_2 = V(\mathcal{H})$ such that every edge in $\mathcal{H}$ contains exactly two vertices from $V_1$. 
%
Denote by $\mathbb{B}[V_1,V_2]$ \textbf{the complete semi-bipartite} $3$-graph with parts $V_1, V_2$, i.e.,
\begin{align*}
    \mathbb{B}[V_1,V_2]
    \coloneqq \left\{e\in \binom{V_1 \cup V_2}{3} \colon |e\cap V_1| = 2\right\}.
\end{align*}

In~{\cite{MR02}}, Mubayi--R\"{o}dl provided the following construction, which establishes the lower bound $\pi(C_{\ell}^{3})\ge 2\sqrt{3} - 3$ for every $\ell \ge 4$ not divisible by $3$.

For $n \in \{0, 1,2\}$, the (unique) $n$-vertex \textbf{$\mathbb{B}_{\mathrm{rec}}$-construction} is the empty $3$-graph on $n$ vertices. 
For $n \ge 3$, an $n$-vertex $3$-graph $\mathcal{H}$ is a \textbf{$\mathbb{B}_{\mathrm{rec}}$-construction} if there exists a partition $V_1 \cup V_2 = V(\mathcal{H})$ into non-empty parts such that $\mathcal{H}$ is obtained from $\mathbb{B}[V_1,V_2]$ by adding a copy of $\mathbb{B}_{\mathrm{rec}}$-construction into $V_2$. 
Additionally, a $3$-graph is called a \textbf{$\mathbb{B}_{\mathrm{rec}}$-subconstruction} if it is a subgraph of some $\mathbb{B}_{\mathrm{rec}}$-construction.

Let $b_{\mathrm{rec}}(n)$ denote the maximum number of edges in an $n$-vertex $\mathbb{B}_{\mathrm{rec}}$-construction. 
It is clear from the definition that, for each $n\ge 3$,
\begin{align*}
    b_{\mathrm{rec}}(n)
    = \max\left\{ \binom{n_1}{2} n_2 + b_{\mathrm{rec}}(n_2) \colon n_1 + n_2 = n~\text{and}~n_1 \ge 1\right\}. 
\end{align*}
Simple calculations show that, as $n\to\infty$, 
the optimal part ratio $n_1/n$ is~$\gamma+o(1)$ and it holds that $b_{\mathrm{rec}}(n)= \left(\alpha+o(1)\right) \binom{n}{3}$, where we define
\begin{align}\label{equ:gamma-alpha-def}
    \alpha \coloneqq 2\sqrt{3} - 3 
    \quad\text{and}\quad 
    \gamma \coloneqq \frac{3-\sqrt{3}}{2}. 
\end{align}

Let us assume for the rest of this paper, that
\begin{align}\label{equ:def-C}
    \CF \coloneqq \{C_{4}^{3}, C_{5}^{3}\} \quad \mathrm{or}\quad \CF \coloneqq\{C_{\ell}^{3}\}\mbox{ for some $\ell\ge 7$ with $\ell\not\equiv 0\Mod3$}.
\end{align}

It is easy to see that every $\mathbb{B}_{\mathrm{rec}}$-construction is $\CF$-free, thus giving that $\pi(\CF) \ge  \alpha$.
Our first main result shows that this is in fact equality.

\begin{theorem}\label{THM:turan-density-C5K4} 
    Let $\CF$ be as in~\eqref{equ:def-C}. Then it holds that $\pi(\CF) \le \alpha$.
\end{theorem}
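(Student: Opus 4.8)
The plan is to use the flag algebra method (or a closely related density-increment / local-structure argument) to establish the upper bound $\pi(\CF)\le\alpha$, combined with a stability-type analysis that extracts the semi-bipartite recursive structure. The starting point is that it suffices to bound the Tur\'an density, so I would work with a large $\CF$-free $3$-graph $\mathcal H$ on $n$ vertices with edge density close to $\pi(\CF)$ and aim to show this density cannot exceed $\alpha=2\sqrt3-3$. The key qualitative input is that forbidding $C_4^3$ and $C_5^3$ (or a single short tight cycle $C_\ell^3$ with $\ell\not\equiv0\pmod3$) severely restricts how triples can "chain together" along tight paths; concretely, one wants to show that the link structure of a typical vertex, or the shadow graph, is forced toward a bipartite-like pattern consistent with the $\mathbb B_{\mathrm{rec}}$-construction.

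The main steps I would carry out are: (1) Set up the flag algebra semidefinite program on $3$-graphs with the forbidden family $\CF$, and either exhibit an exact SDP certificate proving $\pi(\CF)\le\alpha$, or — if a clean human-readable argument is sought — prove a structural lemma saying that in any $\CF$-free $3$-graph, after removing a small fraction of vertices/edges, there is a vertex partition $V_1\cup V_2$ such that almost all edges meet $V_1$ in exactly two vertices. (2) Given such an approximate partition, bound the number of edges: the "cross" edges with two vertices in $V_1$ contribute at most $\binom{|V_1|}{2}|V_2|$, the edges inside $V_2$ are again $\CF$-free so we may recurse, and edges with other intersection patterns with $V_1$ must be shown to be $o(n^3)$ using the cycle-freeness (a tight path alternating in a bad way would close up into a short tight cycle of the forbidden length). (3) Solve the resulting recursion: maximizing $\binom{n_1}{2}n_2 + (\text{recursion on }n_2)$ over $n_1+n_2=n$ gives exactly the ratio $\gamma$ and density $\alpha$ as recorded in the excerpt, so the asymptotic bound follows. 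Throughout, one uses the fact (from the averaging argument of Katona--Nemetz--Simonovits) that it is enough to handle $n\to\infty$.

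The hard part will be Step (1)–(2): showing that $\CF$-freeness actually forces the approximate semi-bipartite structure. The difficulty is that "short" cycles are weaker constraints than long ones — the Kam\v cev--Letzter--Pokrovskiy approach crucially used that $\ell$ is large to run a regularity/removal-plus-digraph argument, so for $\ell\ge7$ fixed (and especially for the pair $\{C_4^3,C_5^3\}$) one cannot appeal to those tools directly. I expect the resolution to go through a careful analysis of the link graphs: for a vertex $v$, the link $\mathcal H_v$ is a $\CF$-free-compatible graph, and chains of edges through several vertices that are not "bipartitely organized" produce tight walks that can be shortened or lengthened (using $\ell\not\equiv0\pmod3$ to adjust parity/length modulo $3$) into a genuine copy of a forbidden cycle. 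Making this walk-to-cycle reduction quantitative — ruling out $\Omega(n^3)$ "non-bipartite" edge configurations — is the crux; once it is in place, the edge-count optimization and the recursion are routine and yield $\pi(\CF)\le\alpha$.
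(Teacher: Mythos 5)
Your high-level outline (partition $V_1\cup V_2$, recurse inside $V_2$, optimise the recursion to get $\alpha$) matches the paper's skeleton, but the crux of the proof is missing in both of the branches you offer for Step (1). A direct SDP certificate for $\pi(\CF)\le\alpha$ is not available at feasible flag sizes: because the extremal example is a recursive (iterated) construction, plain flag algebra computations only give a crude bound (the paper's Proposition~\ref{pr:1} yields roughly $0.4646$--$0.4647$, strictly above $2\sqrt3-3\approx 0.4641$). Your alternative branch asserts that, given an approximate partition, the ``bad'' edges (those meeting $V_1$ in one or three vertices) can be shown to be $o(n^3)$ by a walk-to-cycle argument. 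That is not what the paper proves, and there is no evident combinatorial mechanism for it at the density stage: a near-extremal $\CF$-free graph could a priori trade $\Omega(n^3)$ bad edges against missing cross edges. The paper's key statement is the weighted comparison $|B|\le\frac{3999}{4000}|M|+o(n^3)$ (Proposition~\ref{pr:3}), and establishing it requires three ingredients you do not supply: near-regularity of the host graph (obtained by deleting low-degree vertices and using blowup-invariance to cap the maximum degree, Proposition~\ref{PROP:blowup-invariant-bounded}); a \emph{locally maximal} partition, whose defining inequalities are manually added to the SDP (the ``local refinement'' idea, which the authors stress is not captured by standard flag algebra computations); and a large computer-assisted flag algebra computation in the theory of $2$-vertex-coloured $\CC$-free $3$-graphs. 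The authors note that even with all this they could not prove $|B|\le|M|+o(n^3)$ for $C_5^3$ alone, which is strong evidence that the hand-waved link/walk analysis you propose (which, as you note, is the part of Kam\v{c}ev--Letzter--Pokrovskiy that genuinely needs $\ell$ large) will not close the gap for short cycles.

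A second, more mechanical omission: for the case $\CF=\{C_\ell^3\}$ with arbitrary $\ell\ge7$, $\ell\not\equiv0\Mod 3$, you cannot run a separate computation for each $\ell$, and your sketch gives no uniform treatment. The paper first replaces $C_\ell^3$ by the fixed finite family $\{C_4^3,F_1,F_2\}$ of homomorphic-image targets (Claim~\ref{cl:hom}), using that $C_\ell^3$ embeds in blowups of these graphs together with the hypergraph removal lemma, so that $\pi(C_\ell^3)\le\pi(\{C_4^3,F_1,F_2\})$ and a single blowup-invariant family suffices for all $\ell\ge7$. Without this reduction (or some substitute), the single-cycle half of the theorem is not addressed by your argument. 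The final recursion step (your Step (3)) is fine and matches the paper's Lemma~\ref{LEMMA:recursion-upper-bound} plus Fact~\ref{FACT:ineqality}, but it only works once the $|B|$ versus $|M|$ comparison is in place.
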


We also establish the Erd{\H o}s--Simonovits-type stability property~\cite{Erdos67a,Sim68} for each case from Theorem~\ref{THM:turan-density-C5K4}. 
\begin{theorem}\label{THM:C5K4-stability}
 Let $\CF$ be as in~\eqref{equ:def-C}.  Then for every $\varepsilon > 0$, there exist $\delta > 0$ and $n_0$ such that every $\CF$-free $3$-graph with $n\ge n_0$ vertices and  at least $\alpha\binom{n}{3} - \delta n^3$ edges is a $\mathbb{B}_{\mathrm{rec}}$-subconstruction after removing at most $\varepsilon n^3$ edges.      
\end{theorem}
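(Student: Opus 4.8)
The plan is to reduce Theorem~\ref{THM:C5K4-stability} to a single \emph{first-level stability} statement and then bootstrap. The statement I would aim for is: there is a function $\psi$ with $\psi(\delta)\to 0$ as $\delta\to 0$ such that every $\CF$-free $3$-graph $\mathcal{H}$ on $n$ vertices with at least $\alpha\binom n3-\delta n^3$ edges admits a partition $V(\mathcal{H})=V_1\cup V_2$ with $V_1,V_2\neq\emptyset$, $\bigl||V_1|-\gamma n\bigr|\le\psi(\delta)\,n$, and such that all but at most $\psi(\delta)\,n^3$ of the edges $e$ with $e\not\subseteq V_2$ satisfy $|e\cap V_1|=2$. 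Granting this, I would iterate: put $W_0\coloneqq V(\mathcal{H})$; given $W_{i-1}$ with $\mathcal{H}[W_{i-1}]$ still near-extremal, apply the first-level statement to $\mathcal{H}[W_{i-1}]$ to obtain $W_{i-1}=U_i\cup W_i$, delete every edge $e\subseteq W_{i-1}$ with $e\not\subseteq W_i$ and $|e\cap U_i|\neq 2$, and continue with $W_i$; stop once $|W_i|<\eta n$ for a threshold $\eta=\eta(\varepsilon)$, and delete the remaining (fewer than $\binom{\eta n}3$) edges inside $W_i$. Every surviving edge lies in some $\mathbb{B}[U_i,W_i]$, and $\bigcup_i\mathbb{B}[U_i,W_i]$ is a $\mathbb{B}_{\mathrm{rec}}$-construction (at the innermost step the empty $3$-graph on $W_i$ is itself a $\mathbb{B}_{\mathrm{rec}}$-construction), so after the deletions $\mathcal{H}$ is a $\mathbb{B}_{\mathrm{rec}}$-subconstruction.

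The reason this iteration is harmless is that it runs for only a \emph{bounded} number of rounds: the first-level statement forces $|U_i|\ge(\gamma/2)|W_{i-1}|$, so $|W_i|\le(1-\gamma/2)|W_{i-1}|$ and $|W_i|<\eta n$ after $i^\ast=O(\log(1/\eta))$ steps. A routine double count --- using the recursive definition of $b_{\mathrm{rec}}$ (so the semi-bipartite part between $U_i$ and $W_i$ contributes at most $b_{\mathrm{rec}}(|W_{i-1}|)-b_{\mathrm{rec}}(|W_i|)$ edges) together with $\pi(\CF)\le\alpha$ from Theorem~\ref{THM:turan-density-C5K4} --- shows that if $\mathcal{H}[W_{i-1}]$ is within $\delta_{i-1}|W_{i-1}|^3$ of the extremal number then $\mathcal{H}[W_i]$ is within $\delta_i|W_i|^3$ of it with $\delta_i\le C(\delta_{i-1}+\psi(\delta_{i-1}))$ for an absolute constant $C$. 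Iterating this map a bounded number of times keeps all the $\delta_i$ below any prescribed threshold, provided the initial $\delta$ was small enough (even if $\psi(\delta)$ decays only like $\sqrt\delta$). So, given $\varepsilon$: first fix $\eta$ with $\binom{\eta n}3<(\varepsilon/2)n^3$, which determines $i^\ast$ and the threshold $\delta_{\max}$ the defects must respect; then pick $\delta$ with $i^\ast\psi(\delta_{\max})<\varepsilon/2$ and small enough that $\delta_i\le\delta_{\max}$ for all $i\le i^\ast$. The total number of deleted edges is at most $\sum_{i\le i^\ast}\psi(\delta_{\max})|W_{i-1}|^3+\binom{\eta n}3<\varepsilon n^3$.

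It remains to prove the first-level stability statement, and this is where the structure of $\CF$ is used. After a standard cleaning (iteratively discarding vertices and pairs of small degree, which loses $o(n^3)$ edges and preserves near-extremality and $\CF$-freeness), I would pass to the reduced $3$-graph $\mathcal{R}$ produced by the hypergraph regularity method. By the accompanying counting/removal lemma, $\mathcal{R}$ contains no \emph{robust} copy of any member of $\CF$ --- else $\mathcal{H}$ would contain a genuine copy --- while density transfers so that $\mathcal{R}$ has weighted edge density $\alpha-o(1)$ and is therefore near-extremal in the weighted sense (using the density form of Theorem~\ref{THM:turan-density-C5K4}, obtained by blowing up). One is thus reduced to a finite problem: classify the near-extremal, robustly $\CF$-free reduced $3$-graphs and show each is close to a $\mathbb{B}_{\mathrm{rec}}$-construction on its clusters. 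The induced partition of the clusters pulls back to $V_1,V_2$; near-extremality forces the part ratio to be $\gamma+o(1)$ (a semi-bipartite-plus-recursive configuration with the wrong ratio has too few edges, again by $\pi(\CF)\le\alpha$), and the regularity estimates bound the number of edges of $\mathcal{H}$ of the wrong link type. For $\CF=\{C_4^3,C_5^3\}=\{K_4^3,C_5^3\}$, where $C_5^3$ is small enough for flag algebras to apply, one can instead bypass regularity and argue directly on the cleaned $3$-graph: $K_4^3$-freeness is a strong local restriction on the links of a near-extremal $3$-graph, and combined with $C_5^3$-freeness one shows that almost every link is close to complete bipartite or close to a complete graph on a clique (both with a distinguished side of size $\approx\gamma n$), after which a majority vote over vertices produces the partition with few violating edges.

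The main obstacle is precisely this classification step. For $\CF=\{C_\ell^3\}$ with $\ell\ge 7$, $\ell\not\equiv 0\Mod 3$, I would follow and extend the digraph-theoretic strategy of Kam{\v c}ev--Letzter--Pokrovskiy~\cite{KLP24}: encode $\mathcal{R}$ by an auxiliary digraph on ordered pairs of clusters recording admissible extensions, translate the absence of tight $\ell$-cycles into the absence of certain closed walks together with divisibility constraints modulo $3$, and deduce that in a dense configuration the $V_1/V_2$-membership pattern along every tight walk must be periodic $1,1,2,1,1,2,\dots$ --- that is, the top-level structure is semi-bipartite. The delicate points are to make this robust against the $o(1)$ slack coming from regularity and, above all, to handle \emph{all} admissible $\ell\ge 7$ uniformly rather than only sufficiently large $\ell$ as in~\cite{KLP24}; the $\{K_4^3,C_5^3\}$ case needs its own, more elementary but still careful, link analysis. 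The remaining bookkeeping --- that cleaning changed $\mathcal{H}$ by $o(n^3)$ edges, that the regularity error is absorbed, and that $|V_1|=(\gamma+o(1))n$ --- is routine.
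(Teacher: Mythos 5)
Your outer skeleton is essentially the paper's: the ``first-level stability'' statement you formulate is (up to cosmetic differences) Lemma~\ref{LEMMA:weak-stability}, and your bounded-depth bootstrap with a chain of slowly growing defect parameters is exactly how Theorem~\ref{THM:C5K4-stability'} is deduced there, so that part of your plan is sound. The genuine gap is that you never actually prove the first-level statement, and the route you sketch for it is precisely the one this paper had to abandon. For $\CF=\{C_\ell^3\}$ you propose to ``follow and extend'' the digraph/regularity strategy of Kam{\v c}ev--Letzter--Pokrovskiy~\cite{KLP24}; but that argument is only known to work for sufficiently large $\ell$ (its quantitative losses in the reduced hypergraph and in the walk/divisibility analysis blow up for short cycles), and extending it ``uniformly to all admissible $\ell\ge 7$'' is exactly the open difficulty this paper is about --- you give no concrete mechanism for overcoming it, only the assertion that one should. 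Similarly, for $\{C_4^3,C_5^3\}$ your ``classification of links as near-complete-bipartite or near-clique plus majority vote'' is a hope, not an argument: near-extremal $\{C_4^3,C_5^3\}$-free $3$-graphs have genuinely recursive (not two-level) structure, and no human-checkable local classification of their links is known. The paper instead obtains the first-level statement from computer-assisted flag algebra results: Proposition~\ref{pr:2} (any near-extremal $\CC$-free $3$-graph has max-cut ratio at least $\al2$) and, crucially, Proposition~\ref{pr:3} (for a \emph{locally maximal} partition of a near-regular such $3$-graph, $|B|\le\frac{3999}{4000}|M|+o(n^3)$), combined in Lemma~\ref{LEMMA:recursion-upper-bound}; the new idea making the SDP strong enough is to feed the local-maximality inequalities~\eqref{eq:LM-a}--\eqref{eq:LM-b} into the flag algebra computation, together with the preliminary reduction of $\CF$ to $\CC$ in~\eqref{equ:def-C'} via homomorphic images and the removal lemma.

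So, concretely: your reduction of Theorem~\ref{THM:C5K4-stability} to a single first-level stability lemma plus iteration is correct and matches the paper, but the heart of the matter --- establishing that first-level lemma for \emph{short} cycles and for the pair $\{C_4^3,C_5^3\}$ --- is missing. As written, your proposal would at best reprove the large-$\ell$ case already covered by~\cite{KLP24}; to close the gap you would need either to carry out the (non-routine) strengthening of the digraph method to small $\ell$, or to supply something playing the role of Propositions~\ref{pr:2} and~\ref{pr:3}.
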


Using Theorem~\ref{THM:C5K4-stability} and additional arguments, we establish the following refined structural property for maximum $\{C_{4}^{3}, C_{5}^{3}\}$-free $3$-graphs on sufficiently large vertex sets.  
\begin{theorem}\label{THM:K4C5-first-level-exact}
    For every $\varepsilon > 0$ there exists $n_0$ such that the following holds for every $n \ge n_0$. 
    Suppose that $\mathcal{H}$ is an $n$-vertex $\{C_{4}^{3}, C_{5}^{3}\}$-free $3$-graph with $\mathrm{ex}(n,\{C_{4}^{3}, C_{5}^{3}\})$ edges. Then there exists a partition $V_1 \cup V_2 = V(\mathcal{H})$ such that 
    \begin{enumerate}[label=(\roman*)]
        \item\label{THM:K4C5-first-level-exact-1} $\left|\frac{|V_1|}{n} - \frac{3-\sqrt{3}}{2}\right| \le \varepsilon$, and 
        \item\label{THM:K4C5-first-level-exact-2} $\mathcal{H} \setminus \mathcal{H}[V_2] = \mathbb{B}[V_1, V_2]$. 
    \end{enumerate}
\end{theorem}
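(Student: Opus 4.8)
The plan is to bootstrap from the approximate structure given by Theorem~\ref{THM:C5K4-stability} to an exact first-level description. Fix a small $\varepsilon>0$ and let $\mathcal{H}$ be an extremal $n$-vertex $\{C_4^3,C_5^3\}$-free $3$-graph. Since $\mathrm{ex}(n,\{C_4^3,C_5^3\})\ge b_{\mathrm{rec}}(n) = (\alpha+o(1))\binom n3$, the hypothesis of Theorem~\ref{THM:C5K4-stability} is satisfied, so there is a partition $V(\mathcal{H})=W_1\cup W_2$ witnessing that $\mathcal{H}$ is $\varepsilon'$-close in edit distance to a $\mathbb{B}_{\mathrm{rec}}$-subconstruction, where $\varepsilon'$ is chosen tiny relative to $\varepsilon$; in particular $|W_1|/n=\gamma+o(1)$. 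The first step is a cleaning argument: because $\mathcal{H}$ has the maximum number of edges, no vertex can have ``too small'' a link, since deleting a low-degree vertex and re-embedding a fresh $\mathbb{B}_{\mathrm{rec}}$-construction on the freed-up vertex would beat $\mathcal{H}$. Quantitatively, every vertex $v\in V_1$ has degree at least $(\gamma^2/2 - o(1))n^2$ and every $v\in V_2$ has degree at least $(\text{something positive})n^2$; combined with the stability bound this forces all but $o(n)$ vertices to ``behave correctly'' with respect to the partition, and then a local-move argument reassigns the exceptional vertices, so we may assume the partition $V_1\cup V_2$ is such that $\mathcal{H}\setminus\mathcal{H}[V_2]$ and $\mathbb{B}[V_1,V_2]$ differ in only $o(n^3)$ edges and $||V_1|/n-\gamma|\le\varepsilon$, giving~\ref{THM:K4C5-first-level-exact-1} immediately.

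The heart of the matter is upgrading ``$o(n^3)$ edges of difference'' to ``zero edges of difference'', i.e.\ establishing~\ref{THM:K4C5-first-level-exact-2}. For this I would argue by local exchange. First, suppose some triple $e=\{x,y,z\}$ with $|e\cap V_1|=2$ is \emph{missing} from $\mathcal{H}$. Using that $\mathcal{H}[V_2]$ is itself (close to) a $\mathbb{B}_{\mathrm{rec}}$-construction and hence $\{C_4^3,C_5^3\}$-free, and using the abundance of edges in the ``correct'' part of the construction, one shows that adding $e$ creates no copy of $C_4^3$ or $C_5^3$: a putative short tight cycle through $e$ would have to use edges violating the semi-bipartite pattern across the $(V_1,V_2)$ cut, but the tight-cycle link structure forces at most one ``bad'' transition, which is not enough to close up a cycle of length $4$ or $5$ given the parity/divisibility constraints that underlie the whole construction (this is exactly the phenomenon that makes $\mathbb{B}_{\mathrm{rec}}$ work). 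Hence $\mathcal{H}+e$ is still $\{C_4^3,C_5^3\}$-free with one more edge, contradicting extremality; therefore $\mathbb{B}[V_1,V_2]\subseteq\mathcal{H}$. Conversely, suppose $\mathcal{H}$ contains an edge $f$ with $|f\cap V_1|\in\{0,1,3\}$ (a ``defect'' edge not of the form counted by $\mathbb{B}[V_1,V_2]$ and not inside $V_2$, so $|f\cap V_1|=1$ or $3$). I would show such an $f$ can be deleted and compensated: either $f$ lies in few copies of forbidden cycles and its presence blocks many would-be edges of $\mathbb{B}[V_1,V_2]$ (so a swap strictly increases the count), or one reassigns the relevant vertices between $V_1$ and $V_2$ to absorb $f$ into the semi-bipartite pattern without net loss. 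Iterating, all defect edges are removed, which forces $\mathcal{H}\setminus\mathcal{H}[V_2]=\mathbb{B}[V_1,V_2]$ exactly.

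The main obstacle I anticipate is the local-exchange step for the \emph{missing} edges: one must verify that adding any single cross-edge $e$ to the (now nearly-clean) $\mathcal{H}$ genuinely creates no $C_4^3$ or $C_5^3$, and this requires a careful case analysis of how a short tight cycle can meet the $(V_1,V_2)$ cut, tracking how many vertices of each edge of the cycle lie in $V_1$. The semi-bipartite pattern ``exactly two vertices in $V_1$ per edge'' has a rigidity: consecutive edges of a tight cycle share two vertices, so going around a tight $\ell$-cycle the count $|e_i\cap V_1|$ changes in a constrained way, and a contradiction with $\ell\in\{4,5\}$ (or $\ell\ge 7$, $\ell\not\equiv 0\bmod 3$) should pop out — but making this airtight in the presence of the $o(n^3)$ residual defect edges, rather than in the idealized construction, is where the real work lies. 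I would handle this by first doing a second, finer cleaning (removing a bounded number of vertices incident to residual defects, using extremality again to show these vertices don't exist) so that the local analysis can be carried out against a genuinely exact $\mathbb{B}[V_1,V_2]\,\cup\,(\text{clean }\mathbb{B}_{\mathrm{rec}}\text{ inside }V_2)$ backdrop. A secondary technical point is ensuring the recursion into $V_2$ does not interfere: since $\mathcal{H}[V_2]$ is $\{C_4^3,C_5^3\}$-free and a short tight cycle using $e$ plus edges inside $V_2$ would still need to respect the cut structure, the argument localizes and the induction-flavored recursion on $V_2$ is not actually needed for statement~\ref{THM:K4C5-first-level-exact-2} — only the first level is claimed here.
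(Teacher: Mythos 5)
Your reduction to the approximate structure (stability plus near-regularity, giving conclusion~\ref{THM:K4C5-first-level-exact-1} and a partition with $|B|+|M|=o(n^3)$, where $B$ and $M$ are the bad and missing triples of \eqref{equ:def-bad-triple} and \eqref{equ:def-missing-triple}) matches the paper. The genuine gap is in your central step for~\ref{THM:K4C5-first-level-exact-2}: the one-edge-at-a-time exchange does not work. Your claim that adding a single missing cross triple $e$ creates no $C_4^3$ or $C_5^3$ is only valid against the \emph{idealized} backdrop $\mathbb{B}[V_1,V_2]\cup\mathcal{H}[V_2]$; in the actual extremal $\mathcal{H}$ there may be up to $o(n^3)$ bad edges, and a forbidden cycle through $e$ can perfectly well use one or two of them, so $\mathcal{H}+e$ need not be $\{C_4^3,C_5^3\}$-free and no contradiction with extremality follows. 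Your proposed repair --- a ``finer cleaning'' that removes the residual defects by ``using extremality again to show these vertices don't exist'' --- is circular: the nonexistence of bad edges is precisely assertion~\ref{THM:K4C5-first-level-exact-2}, and bad edges can touch essentially all vertices (e.g.\ $n^{2.5}$ of them), so they cannot be absorbed by deleting a bounded set of vertices or by reassigning vertices across the cut. Likewise, the converse direction (``each defect edge blocks many would-be edges of $\mathbb{B}[V_1,V_2]$, so a swap strictly increases the count'') is exactly the hard quantitative statement, and you give no mechanism for it.

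The paper avoids single-edge exchanges entirely and instead proves a global counting inequality $|B|\le\frac45|M|$, followed by one global swap: $\mathcal{H}'\coloneqq(\mathcal{H}\cup M)\setminus B$ satisfies $\mathcal{H}'\setminus\mathcal{H}'[V_2]=\mathbb{B}[V_1,V_2]$ and $\mathcal{H}'[V_2]=\mathcal{H}[V_2]$, hence is automatically $\{C_4^3,C_5^3\}$-free, and extremality then forces $|M|=|B|=0$. The real work is the inequality $|B|\le\frac45|M|$: it needs a locally maximal partition from Lemma~\ref{LEMMA:weak-stability}, the bound $\max\{\Delta(B),\Delta(M)\}\le\delta_3 n^2$ obtained by analysing type-1 and type-2 paths in vertex links via Proposition~\ref{PROP:P4-free}, and a case split of $B$ into $B_{111}$, light $B_{122}$ and heavy-pair $B_{122}$ triples, where each bad triple is charged to a pair of large missing codegree (with heavy pairs handled separately because the naive charging is far from injective there). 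None of this machinery, which is what actually converts ``each bad edge forces missing triples'' into a usable global inequality, appears in your outline, so as written the proposal does not close the argument.
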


Theorem~\ref{THM:K4C5-first-level-exact}~\ref{THM:K4C5-first-level-exact-2} states that $\mathcal{H}$ is the complete semi-bipartite $3$-graph $\mathbb{B}[V_1, V_2]$ plus possibly some edges inside $V_2$. It is easy to see that $\C H$ remains $\{C_{4}^{3}, C_{5}^{3}\}$-free if we replace $\C H[V_2]$ by any $\{C_{4}^{3}, C_{5}^{3}\}$-free $3$-graph on~$V_2$. Thus, $\C H[V_2]$ is maximum $\{C_{4}^{3}, C_{5}^{3}\}$-free and Theorem~\ref{THM:K4C5-first-level-exact} applies to it provided $|V_2|\ge n_0$, and so on. We see that $\C H$ follows exactly the structure of a $\mathbb{B}_{\mathrm{rec}}$-construction, except for parts of size less than $n_0$. It follows that there is an absolute constant $C$ such that 
\begin{align}
    b_{\mathrm{rec}}(n)
    \le \mathrm{ex}(n,\{C_{4}^{3}, C_{5}^{3}\})
    \le b_{\mathrm{rec}}(n) + C,\quad\mbox{for every $n \ge 1$}. \label{eq:O(1)}
\end{align}
Thus, we know the function $\mathrm{ex}(n,\{C_{4}^{3}, C_{5}^{3}\})$ within additive $O(1)$-error.

Let us show that Theorem~\ref{THM:K4C5-first-level-exact} no longer holds if we forbid only a single tight cycle $C_{\ell}^{3}$ for any $\ell \ge 5$ with $\ell \not\equiv 0 \Mod{3}$. For this, let $\mathcal{H}$ be an $n$-vertex $\mathbb{B}_{\mathrm{rec}}$-construction with $b_{\mathrm{rec}}(n)$ edges and let $V_1 \cup V_2$ be the nontrivial partition such that $\mathcal{H}\setminus \mathcal{H}[V_2] = \mathbb{B}[V_1, V_2]$. 
First, suppose that $\ell = 5$. Then the addition of any linear\footnote{A $3$-graph is linear if every pair of edges shares at most one vertex.} $3$-graph into $V_1$ does not create a copy of $C_{5}^{3}$. 
Moreover, we can repeat this operation within $\mathcal{H}[V_2]$, and continue recursively. 
Since there exists a linear $3$-graph on $n$ vertices with $(1/6 - o(1))n^2$ edges (see e.g.~\cite{Will03}), we obtain 
\begin{align*}
    \mathrm{ex}(n,C_{5}^{3}) -  b_{\mathrm{rec}}(n)
    & \ge \sum_{i \ge 0} \frac{1}{6} \left(\left(\frac{\sqrt{3}-1}{2}\right)^{i} \cdot \frac{3-\sqrt{3}}{2} \cdot n\right)^2 -o(n^2) \\
    & = 
    \left(\frac{2\sqrt{3} - 3}{6} - o(1)\right) n^2.
\end{align*}
Next, let $V_{2,1} \cup V_{2,2} = V_{2}$ be the nontrivial partition such that $\mathcal{H}[V_2]\setminus \mathcal{H}[V_{2,2}] = \mathbb{B}[V_{2,1}, V_{2,2}]$ and 
fix an arbitrary vertex $v_{\ast} \in V_1$.
Suppose that $\ell \ge 7$ and $\ell \equiv 1 \Mod{3}$. Define $\mathcal{G}$ as the $3$-graph obtained from $\mathcal{H}$ by adding all triples that include $v_{\ast}$ and two vertices from $V_{2,2}$ as edges.
It is straightforward to verify that $\mathcal{G}$ is $C_{\ell}^{3}$-free and satisfies $|\mathcal{G}| = b_{\mathrm{rec}}(n) + \Omega(n^2)$. Finally, suppose that $\ell \ge 8$ and $\ell \equiv 2 \Mod{3}$. Here we can add to $\mathcal{H}$ some $3$-subsets of $V_1$ that contain $v_{\ast}$ and are disjoint otherwise. The obtained $3$-graph is $C_\ell^3$-free and has size $b_{\mathrm{rec}}(n)+\Omega(n)$. (We did not see a way to improve $b_{\mathrm{rec}}(n)$ by more than a linear function of $n$ in this case.)

\medskip\noindent\textbf{Proof outline.} As we will observe in Section~\ref{SEC:Prelim}, some standard general results on the Tur\'an densities of blowups give that it is enough to prove the versions of  Theorems~\ref{THM:turan-density-C5K4} and~\ref{THM:C5K4-stability}  when we forbid $\CC$, where
\begin{align}\label{equ:def-C'}
    \CC \coloneqq \{C_{4}^{3}, C_{5}^{3}\} \quad \mathrm{or}\quad \CC \coloneqq \{C_{4}^{3}, F_1,F_2\},
\end{align}
and we define
\begin{align*}
    F_1 
    & \coloneqq \left\{\{1,2,3\}, \{1,3,4\}, \{1,4,2\}, \{1,4,5\}, \{2,3,4\}\right\} 
    \quad\text{and}\quad 
    \\[0.3em]
    F_2 
    & \coloneqq \left\{\{1,2,3\}, \{1,3,4\}, \{1,4,5\}, \{1,5,6\}, \{1,6,2\}, \{2,3,4\}\right\}
\end{align*}
 to be some two non-trivial homomorphic images of $C_7^3$, see Claim~\ref{cl:hom}.

Our proofs of the corresponding results for $\CC$ crucially use the flag algebra machinery developed by Razbo\-rov~\cite{Raz07} and are computer-assisted. 
More specifically, we adopt the strategy used in the previous work~\cite{BLLP24} for determining the Tur\'{a}n density of $C_{5}^{3-}$, which is inspired by the strategy of Balogh--Hu--Lidick{\'y}--Pfender used in~\cite{BHLP16C5} for determining the inducibility of the $5$-cycle $C_5$ (where asymptotically extremal graphs are also obtained via a recursive construction). 
Our proof is based on the following two crucial claims about a $\CC$-free $3$-graph $\C H$ with $n\to\infty$ vertices and at least $\left(\alpha - o(1)\right)\binom{n}{3}$ edges, where $\alpha$ and $\gamma$ are as in~\eqref{equ:gamma-alpha-def}
\begin{enumerate}
     \item\label{it:A} Proposition~\ref{pr:2} shows that there exists a vertex partition $V_1\cup V_2 =V(\C H)$ such that $|\C H\cap \mathbb{B}[V_1,V_2]|\ge(0.408...+o(1))\binom{n}{3}$ (which is not too far from the upper bound $\binom{\gamma n}{2} (1-\gamma) n = (0.441...+o(1)) \binom{n}{3}$ coming from a maximum $\mathbb{B}_{\mathrm{rec}}$-construction).
     \item\label{it:B} Proposition~\ref{pr:3} shows that if, in addition, $\mathcal{H}$ is near-regular and the partition in Item~\ref{it:A} is assumed to be \textbf{locally maximal} (i.e. by moving any vertex between parts we cannot increase the number of edges  in $\C H\cap \mathbb{B}[V_1,V_2]$), then when comparing $\mathcal{H} \setminus \mathcal{H}[V_2]$ with the complete semi-bipartite $3$-graph $\mathbb{B}[V_1,V_2]$, the number of additional edges, up to additive $o(n^3)$, is at most $1-\frac{1}{4000}$ times the number of missing triples.
\end{enumerate}
 
Thus, we have identified a top-level partition such that, ignoring triples inside $V_2$, the $3$-graph $\C H$ does not perform better than a copy of $\mathbb{B}_{\mathrm{rec}}$ that uses the same parts. We can recursively apply this result to $\C H[V_2]$ as long as $|V_2|$ is sufficiently large. Now, routine calculations imply that $|\C H|\le (\alpha + o(1))\binom{n}{3}$. 

The main new idea of this paper (and of~\cite{BLLP24}), when compared  to~\cite{BHLP16C5,LMP24}, is a trivial combinatorial observation that for every partition $V_1\cup V_2$ of $V(\C H)$ there is also a locally maximal one with at least as many edges with exactly two vertices in $V_1$. It seems that the standard flag algebra calculations do not ``capture'' this kind of argument and doing an intermediate \textbf{local refinement} step (when  the flag algebra version  of the local maximality is manually added to the SDP program) may improve the power of the method. 

\medskip \textbf{Organisation of the paper.}
Section~\ref{SEC:Prelim} contains various definitions and auxiliary results.
The proofs of Theorems~\ref{THM:turan-density-C5K4'} and~\ref{THM:C5K4-stability'} which are the (stronger) $\CC$-versions of Theorems~\ref{THM:turan-density-C5K4} and~\ref{THM:C5K4-stability} are presented in Sections~\ref{SEC:Proof-turan-density-K4C5} and \ref{SEC:Proof-K4C5-stability} respectively. The derivation of Theorem~\ref{THM:K4C5-first-level-exact} from Theorem~\ref{THM:C5K4-stability} can be found in Section~\ref{SEC:proof-C5-exact}. Section~\ref{se:concluding} contains some concluding remarks.

\section{Preliminaries}\label{SEC:Prelim}

Given a graph $G$ and two disjoint vertex subsets $S, T \subseteq V(G)$, we use $G[S,T]$ to denote the \textbf{induced bipartite subgraph} of $G$ with parts $S$ and $T$. 
For a pair of disjoint sets $V_1, V_2$, we use $K[V_1, V_2]$ to denote the complete bipartite graph with parts $V_1$ and $V_2$. 

The following two facts on graphs can be derived by straightforward greedy arguments.
\begin{fact}\label{FACT:min-deg-cleaning}
 Every $n$-vertex graph with more than $d n$ edges has  an induced (non-empty) subgraph $G'$ with minimum degree at least $d$ such that $|G'| \ge |G| - d n$.

\end{fact}

A \textbf{matching} in a graph is a collection of pairwise vertex-disjoint edges. 
\begin{fact}\label{FACT:matching-number-lower-bound}
    Every $n$-vertex graph $G$ contains a matching of size at least $\frac{|G|}{2n}$.
\end{fact}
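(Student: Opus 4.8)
The plan is to run the standard maximal-matching greedy argument. First I would take $M$ to be an inclusion-maximal matching in $G$ (maximum size is not needed), and let $U \coloneqq V(M)$ be the set of the $2|M|$ vertices covered by $M$. The key observation is that, by maximality of $M$, every edge of $G$ has at least one endpoint in $U$: if some edge $e$ had both endpoints outside $U$, then $M \cup \{e\}$ would be a strictly larger matching, contradicting maximality.

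Next I would bound $|G|$ by counting (edge, endpoint) incidences at the vertices of $U$. Every vertex of an $n$-vertex graph lies in at most $n-1$ edges, so the number of such incidences is at most $2|M|(n-1)$. On the other hand, since every edge of $G$ contributes at least one incidence at $U$ by the previous paragraph, this count is at least $|G|$. Combining, $|G| \le 2|M|(n-1) < 2|M| n$, which rearranges to $|M| \ge \frac{|G|}{2n}$, as claimed. The degenerate cases cause no trouble: if $G$ has no edges the inequality is trivial, and $|M| = 0$ forces $G$ to be edgeless.

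There is essentially no obstacle here. The only point requiring a moment's care is the double counting: an edge with both endpoints in $U$ is counted twice in the incidence sum, but this only strengthens the inequality $|G| \le 2|M|(n-1)$, so the bound is valid regardless. One could also replace $n-1$ by $n$ throughout without affecting the conclusion, which is all the statement asks for.
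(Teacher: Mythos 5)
Your proof is correct, and it is exactly the kind of ``straightforward greedy argument'' the paper alludes to (the paper gives no explicit proof of this fact): a maximal matching covers every edge, and the degree bound $n-1$ per covered vertex yields $|G|\le 2|M|(n-1)\le 2|M|n$, hence $|M|\ge |G|/(2n)$. Your handling of the degenerate edgeless case and the remark about double-counted edges are both fine, so nothing is missing.
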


For a set $X$ and an integer $k\ge 0$, let $\binom{X}{k}:=\{S \subseteq X \colon |S|=k\}$ denote the family of all $k$-subsets of~$X$.

Let $\C H$ be a $3$-graph. Its \textbf{order} is $v(\C H):=|V(\C H)|$.
For a vertex $v \in V(\mathcal{H})$, 
the \textbf{link} of $v$ is the following 2-graph on $V(\C H)\setminus\{v\}$:
\begin{align*}
    L_{\mathcal{H}}(v)
    \coloneqq \left\{e\in \binom{V(\mathcal{H})\setminus \{v\}}{2} \colon e \cup \{v\} \in \mathcal{H}\right\}.
\end{align*}
The \textbf{degree} $d_{\mathcal{H}}(v)$ of $v$ in $\mathcal{H}$ is given by $d_{\mathcal{H}}(v) \coloneqq |L_{\mathcal{H}}(v)|$.
We use $\delta(\mathcal{H})$, $\Delta(\mathcal{H})$, and $d(\mathcal{H})$ to denote the \textbf{minimum}, \textbf{maximum}, and \textbf{average degree} of $\mathcal{H}$, respectively.
For a pair of vertices $\{u,v\} \subseteq V(\mathcal{H})$, let 
\begin{align*}
    N_{\mathcal{H}}(uv)
    \coloneqq \left\{w\in V(\mathcal{H}) \colon uvw \in \mathcal{H} \right\}. 
\end{align*}
The \textbf{codegree} $d_{\mathcal{H}}(uv)$ of $\{u,v\}$ is defined by $d_{\mathcal{H}}(uv) \coloneqq |N_{\mathcal{H}}(uv)|$. 
We use $\Delta_{2}(\mathcal{H}):=\max\left\{ d_{\mathcal{H}}(uv)\colon uv\in \binom{V(\mathcal{H})}{2}\right\}$ to denote the \textbf{maximum codegree} of $\mathcal{H}$.

For an integer $m \ge 1$, the \textbf{$m$-blowup} $\C H^{(m)}$ of $\C H$ is the $3$-graph whose vertex set is the union $\bigcup_{v\in V(\C H)} U_v$ of some disjoint $m$-sets $U_v$, one per each vertex $v\in V(\C H)$, and whose edge set is the union of the complete $3$-partite $3$-graphs with parts $U_{x}, U_y, U_z$ over all edges $\{x,y,z\}\in\C H$. Informally speaking, $\C H^{(m)}$ is obtained from $\C H$ by cloning each vertex $m$ times.

\begin{claim}\label{cl:hom} For every $\ell\ge 8$ not divisible by $3$, the tight $\ell$-cycle is a subgraph of some blowup of $C_4^3$ and of $C_5^3$. Also, $C_{7}^{3}$ appears in a blowup of each of $C_4^3$, $F_1$ and $F_2$, where $F_1$ and $F_2$ are as in~\eqref{equ:def-C'}.%
\end{claim}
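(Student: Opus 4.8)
The plan is to exhibit, in each case, an explicit homomorphism from the tight cycle in question into the target $3$-graph, since a tight $\ell$-cycle embeds into a blowup of $\C G$ precisely when there is a homomorphism $C_\ell^3 \to \C G$ (map vertices of $C_\ell^3$ to vertices of $\C G$ so that every edge maps to an edge; the preimages of a vertex of $\C G$ then form the parts of the blowup, and one inflates the parts to make them disjoint). So the whole statement reduces to a finite set of combinatorial ``colouring'' problems: colour the cyclic sequence $1,2,\dots,\ell$ with vertices of $\C G$ so that every three cyclically consecutive labels form an edge of $\C G$.

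First I would handle the blowups of $C_4^3=K_4^3$. Since $K_4^3$ contains every triple on its $4$ vertices, a homomorphism $C_\ell^3 \to K_4^3$ is just a cyclic sequence of length $\ell$ over a $4$-element alphabet in which no two cyclically consecutive symbols are equal (so that each consecutive triple has $3$ distinct symbols). Such sequences exist for every $\ell \ge 3$ except one needs to be slightly careful about the wrap-around; concretely, for $\ell \not\equiv 0 \Mod 3$ one can take the periodic pattern $1,2,3,1,2,3,\dots$ and then locally patch the last one or two symbols using the fourth symbol $4$ to fix the cyclic boundary — this is where the hypothesis $3\nmid\ell$ enters (if $3\mid\ell$ the pure period-$3$ pattern already works but then $C_\ell^3$ is $3$-partite, which is the excluded case). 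This also covers $\ell=7$, giving $C_7^3$ in a blowup of $C_4^3$.

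Next, for blowups of $C_5^3$: here a homomorphism $C_\ell^3\to C_5^3$ is a cyclic word of length $\ell$ over $\mathbb{Z}_5$ such that every three consecutive letters are three consecutive residues (in either orientation), i.e.\ locally the word looks like an arithmetic progression with common difference $\pm1$ in $\mathbb{Z}_5$. One checks that a ``mostly increasing by $1$'' walk around $\mathbb{Z}_5$ with occasionally a reversal produces closed walks of every length $\ell$ with $\ell\not\equiv0\Mod3$ once $\ell$ is not too small; I would write down the explicit pattern (e.g.\ go around $\mathbb{Z}_5$ with steps $+1$, and insert a short back-and-forth detour $+1,-1,+1$ to adjust the total by the needed amount) and verify the length arithmetic, again showing $3\nmid\ell$ is exactly the solvability condition. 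For $F_1$ and $F_2$ I only need $C_7^3$, so it suffices to give one $7$-term cyclic labelling of $1,\dots,7$ by vertices of $F_1$ (resp.\ $F_2$) hitting only edges; here I would simply read off the labelling from the very definition of $F_1$ and $F_2$ in~\eqref{equ:def-C'}, which are stated to be homomorphic images of $C_7^3$ — the identity-type map $i\mapsto i$ (with the appropriate identifications built into $F_1,F_2$) does the job, and one just checks all seven consecutive triples are listed.

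The main obstacle is purely bookkeeping: getting the wrap-around right in the periodic constructions for $C_4^3$ and $C_5^3$ so that the patched boundary triples are still valid edges, and confirming that the residue arithmetic permits closing up the walk for every admissible $\ell$ (in particular the smallest relevant values $\ell=7,8,10,11,\dots$). None of this is deep — it is a finite case analysis plus one congruence computation modulo $3$ — but it must be done carefully, and I would present it as a short explicit table of patterns for small $\ell$ together with a periodicity argument extending to all larger $\ell\not\equiv0\Mod3$.
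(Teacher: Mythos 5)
Your overall framing---reduce everything to exhibiting homomorphisms into $C_4^3$, $C_5^3$, $F_1$, $F_2$---is exactly the paper's starting point, but the concrete constructions you sketch have genuine gaps. First, for $K_4^3$ the condition you state is too weak: a cyclic word in which no two \emph{consecutive} symbols agree need not have every window of three symbols rainbow (e.g.\ $1,2,1$); you need all symbols within cyclic distance two to be distinct. More importantly, the recipe ``take $1,2,3,1,2,3,\dots$ and patch the last one or two symbols with $4$'' breaks down when $\ell\equiv 2\Mod{3}$: already for $\ell=8$, in a word $1,2,3,1,2,3,a,b$ the wrap-around windows force $b\notin\{3,a\}$, $b\notin\{a,1\}$ and $b\notin\{1,2\}$, so with $a=4$ there is no admissible $b$; one needs a genuinely different pattern (for instance concatenating blocks $1,2,3$ and $1,2,3,4$, using that every $\ell\ge 7$ with $3\nmid\ell$ is of the form $3a+4b$). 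Second, for $C_5^3$ the specific detour ``$+1,-1,+1$'' is not admissible: a step $+1$ followed by $-1$ creates a window $a,\,a+1,\,a$ with a repeated vertex, which is not an edge of $C_5^3$, so the allowed local step patterns and the length arithmetic have to be reworked. (Also $3\nmid\ell$ is not ``exactly'' the solvability condition---the word $1,2,3$ repeated maps $C_{3k}^3$ into $C_5^3$---though this is immaterial for the claim.)

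Third, your treatment of $F_1$ and $F_2$ is circular: the assertion in Section~1 that $F_1,F_2$ are homomorphic images of $C_7^3$ is justified \emph{by} this claim, so you cannot ``read the labelling off the definition''; you must actually produce the maps. For $F_2$ one works (it is $C_7^3$ with an uncovered pair contracted, e.g.\ $1,\dots,7\mapsto 1,2,3,4,1,5,6$), but for $F_1$ two identifications are needed (e.g.\ $1,\dots,7\mapsto 1,2,3,4,1,5,4$), and in both cases the seven image triples must be checked against the edge lists---neither map is an ``identity with obvious identifications''. For comparison, the paper sidesteps all of your wrap-around bookkeeping with one observation: $C_{m+3}^3$ maps homomorphically onto $C_m^3$ (identity on $[m]$, and $m+1,m+2,m+3\mapsto 1,2,m$), hence $C_\ell^3$ maps onto some $C_{\ell'}^3$ with $\ell'\le\ell$ and $\ell'\equiv\ell\Mod{3}$, where $\ell'$ can be chosen among multiples of $4$ or of $5$, and then one simply winds around $C_4^3$ or $C_5^3$; the $F_1,F_2$ part is handled by the explicit contractions above. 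Adopting that reduction, or completing the case analysis your plan only gestures at, would close the gaps; as written the proposal is not yet a proof.
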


\begin{proof}[Proof of Claim~\ref{cl:hom}] 
To show that $C_\ell^3$ appears in a blowup of $F$, we have to give a \textbf{homomorphism} $\phi$ from $C_\ell^3$ to $F$, that is, a (not necessarily injective) map from $V(C_\ell^3)=[\ell]$ to $V(F)$ that sends edges to edges.

Note that for each $m\ge 4$, $C_{m+3}^3$ admits a homomorphism into $C_{m}^{3}$: e.g., take the identity map on $[m]\subseteq V(C_{m+3}^3)$ and map the remaining vertices $m+1,m+2,m+3$ to respectively $1,2,m$. Thus, $C_\ell^3$ admits a homomorphism to every tight $\ell'$-cycle with $\ell'\le \ell$ being congruent to $\ell$ modulo 3. For any $\ell\ge 8$ not divisible by $3$, the set of such lengths $\ell'$ is easily seen to contain both $4i$ and $5j$ for some integers $i,j\ge 1$. In turn, $C_{4i}^3$ and $C_{5j}^3$ admit homomorphisms to $C_4^3$ and $C_5^3$ respectively (just by winding around the smaller cycle), proving the first part. 

We can obtain $F_2$ from $C_7^3$ by contracting an uncovered pair (which is unique up to an automorphism of~$C_7^3$). Thus, for a homomorphism, we can take the identity map on $[4]\subseteq V(C_7^3)$ and send the remaining vertices $5,6,7$ to respectively $1,5,6$. Furthermore, a homomorphism from $F_2$ to $F_1$ can be obtained by taking the identity map on $[5]\subseteq V(F_2)$ and sending $6$ to~$4$. The second part now follows by combining some of the above observations. \end{proof}

Suppose that $\mathcal{F}'$ is obtained from some forbidden $3$-graph family $\C F$ by replacing a member $\mathcal{H} \in \mathcal{F}$ with some set of $3$-graphs such that $\mathcal{H}$ appears in a blowup of each. Let $\mathcal{G}$ be any $\C F$-free $3$-graph of order $n\to\infty$.
The number of (non-induced) copies of each $F\in \C F'\setminus \C F$ in $\mathcal{G}$ is $o(n^{v(F)})$ by e.g.,\ the result of Erd\H os~\cite{Erd64KST}, since the $\mathcal{H}$-free $3$-graph $\mathcal{G}$ cannot have large blowups of~$F$. Thus, by the Hypergraph Removal Lemma (see, e.g.,~\cite{RS04,NRS06,Gow07}), $\mathcal{G}$ can be made $\C F'$-free by removing $o(n^3)$ edges. It follows that $\pi(\C F)\le \pi(\C F')$. Moreover, if every $\mathbb{B}_{\mathrm{rec}}$-construction is $\C F$-free and we can prove the Erd{\H o}s--Simonovits-type stability of Theorem~\ref{THM:C5K4-stability}  for $\C F'$ 
then it also holds for $\C F$. Thus, Theorems~\ref{THM:turan-density-C5K4} and~\ref{THM:C5K4-stability} follow from the following two results by Claim~\ref{cl:hom}.

\begin{theorem}\label{THM:turan-density-C5K4'} 
    Let $\CC$ be as in~\eqref{equ:def-C'}. Then it holds that $\pi(\CC) \le \alpha$.
\end{theorem}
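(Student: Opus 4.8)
\textbf{Proof plan for Theorem~\ref{THM:turan-density-C5K4'}.}
The plan is to run the flag algebra method of Razborov~\cite{Raz07} on $3$-graphs, forbidding the family $\CC$ of~\eqref{equ:def-C'}, but \emph{not} to attempt to certify the bound $\pi(\CC)\le\alpha$ directly with a single semidefinite program: as the proof outline explains, the extremal examples are recursive $\mathbb{B}_{\mathrm{rec}}$-constructions rather than quasirandom, so a naive SDP will not be tight. Instead the proof decomposes into the two structural statements advertised in the outline, Propositions~\ref{pr:2} and~\ref{pr:3}, and then a short self-contained recursion at the top level. So the first step is to let $\C H$ be a $\CC$-free $3$-graph on $n\to\infty$ vertices with $e(\C H)\ge(\alpha-o(1))\binom n3$ and, using Fact~\ref{FACT:min-deg-cleaning}, pass to a near-regular (indeed near-minimum-degree) subgraph on $(1-o(1))n$ vertices at negligible cost in density, so that we may assume $\C H$ is near-regular throughout.

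The second step is to invoke Proposition~\ref{pr:2}: via a flag algebra computation, one extracts a vertex partition $V_1\cup V_2=V(\C H)$ for which $|\C H\cap\mathbb B[V_1,V_2]|\ge(0.408\ldots+o(1))\binom n3$. This is where the bulk of the computer assistance lives — the SDP certificate produces a positive combination of squares and of the $\CC$-freeness constraints whose zero set forces almost all edges to split $2+1$ across \emph{some} cut. Then comes the key combinatorial move of the paper: replace this partition by a \textbf{locally maximal} one (moving any single vertex between parts does not increase $|\C H\cap\mathbb B[V_1,V_2]|$), which can only increase the count of $2+1$ edges, so Proposition~\ref{pr:2}'s bound is preserved. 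With near-regularity and local maximality in hand, apply Proposition~\ref{pr:3}: the number of ``extra'' edges of $\C H\setminus\C H[V_2]$ beyond $\mathbb B[V_1,V_2]$, minus $(1-\tfrac1{4000})$ times the number of triples of $\mathbb B[V_1,V_2]$ missing from $\C H$, is at most $o(n^3)$. In particular $\C H\setminus\C H[V_2]$ has at most $|\mathbb B[V_1,V_2]|+o(n^3)\le\binom{|V_1|}{2}|V_2|+o(n^3)$ edges, with the density deficit against a complete $\mathbb B[V_1,V_2]$ already ``paid for''.

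The third step is the recursion. Write $n_1=|V_1|$, $n_2=|V_2|$. The previous step gives $e(\C H)\le\binom{n_1}{2}n_2+e(\C H[V_2])+o(n^3)$, and $\C H[V_2]$ is again $\CC$-free, so — as long as $n_2$ is still a positive proportion of $n$ — we may clean it to a near-regular subgraph, find a locally maximal cut inside it, and repeat. Iterating, $e(\C H)\le\sum_i\binom{m_i^{(1)}}{2}m_i^{(2)}+o(n^3)$ over the nested parts, and since at each stage the bound $\binom{n_1}{2}n_2$ is maximized by the ratio $\gamma=(3-\sqrt3)/2$, the geometric sum telescopes to $b_{\mathrm{rec}}(n)+o(n^3)=(\alpha+o(1))\binom n3$; here one must also check that the ``$o(n^3)$'' errors, summed over $O(\log n)$ recursion levels each contributing at most the error guaranteed on a set of geometrically shrinking size, remain $o(n^3)$ in total, and that once $n_2$ drops below a threshold the trivial bound $e(\C H[V_2])\le\binom{n_2}{3}$ absorbs the tail. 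This yields $e(\C H)\le(\alpha+o(1))\binom n3$, hence $\pi(\CC)\le\alpha$.

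The main obstacle is Proposition~\ref{pr:3} — in particular establishing the multiplicative gap $1-\tfrac1{4000}$ between added edges and missing triples under the \emph{combined} hypotheses of near-regularity and local maximality of the cut. The $1/4000$ slack is precisely what lets the recursion close with room to spare, but getting it requires the flag algebra SDP to "see" the local-maximality constraint; this is the \textbf{local refinement} step flagged in the outline, where the flag-algebra analogue of "no single vertex move helps" must be translated into linear constraints and adjoined to the program, after which the SDP certificate does the rest. A secondary technical point is the bookkeeping needed to go from the asymptotic, near-regular statements of Propositions~\ref{pr:2} and~\ref{pr:3} back to an arbitrary $\CC$-free $\C H$ (handling the cleaning losses and the non-regular original graph), but this is routine given Facts~\ref{FACT:min-deg-cleaning} and~\ref{FACT:matching-number-lower-bound}.
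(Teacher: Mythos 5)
Your skeleton — Proposition~\ref{pr:2} to get a dense cut, pass to a locally maximal partition, Proposition~\ref{pr:3} to compare bad versus missing triples, then recurse into $V_2$ — is indeed the strategy of the paper's outline, but the assembly as written has a genuine gap: you conflate the target value $\alpha$ with the (a priori unknown) Tur\'an density $\pi(\CC)$ at exactly the point where this matters. Your opening step takes an arbitrary $\CC$-free $\C H$ of density $\alpha-o(1)$ and claims one can ``pass to a near-regular subgraph on $(1-o(1))n$ vertices at negligible cost''. The cleaning tools available (Fact~\ref{FACT:near-extremal-small-deg} together with Proposition~\ref{PROP:C5minus-max-degree}~\ref{PROP:C5minus-max-degree-1}; note that Fact~\ref{FACT:min-deg-cleaning}, which you cite, is the $2$-graph statement used for links in Section~\ref{SEC:proof-C5-exact}) control the low-degree set only when the density is within $o(1)$ of $\pi(\CC)$ itself. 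If $\pi(\CC)>\alpha$ were true, a $3$-graph of density $\alpha-o(1)$ could have a constant fraction of vertices of tiny degree (e.g.\ a near-extremal graph on $(1-c)n$ vertices plus $cn$ isolated vertices), so neither the ``$(1-o(1))n$ vertices'' nor the ``negligible cost'' claim is justified without already knowing $\pi(\CC)\le\alpha$ — which is the statement being proved. The same issue recurs at every level of your recursion: to re-clean $\C H[V_2]$ and re-apply Proposition~\ref{pr:3} you need $\C H[V_2]$ to be near-extremal \emph{relative to $\pi(\CC)$}, and your plan never establishes this (one needs either a density-drop dichotomy, with a separate computation showing the bound closes when the density drops, or a lower bound on $|\C H[V_2]|$ as in Lemma~\ref{LEMMA:weak-stability}~\ref{LEMMA:weak-stability-3} — which in the paper is only available after the density theorem is proved).

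The paper's actual proof avoids both difficulties and, in particular, does not iterate at all for Theorem~\ref{THM:turan-density-C5K4'}. It applies the one-level decomposition (Lemma~\ref{LEMMA:recursion-upper-bound}) to an \emph{extremal} $\C H$ with $\mathrm{ex}(n,\CC)$ edges, stating the hypothesis as density at least $\pi(\CC)/6-\delta$, so the cleaning is legitimate; then, instead of recursing, it bounds $|\C H[V_2]|\le \mathrm{ex}(|V_2|,\CC)\le(\pi(\CC)+o(1))\,|V_2|^3/6$ (valid since $|V_2|\ge n/5$) and feeds the resulting self-referential inequality in $\beta=\pi(\CC)$ into Fact~\ref{FACT:ineqality}~\ref{FACT:ineqality-1}, solving it to get $\pi(\CC)\le\alpha$ in one step — no multi-level error bookkeeping, no $b_{\mathrm{rec}}$ telescoping, no maintenance of density or regularity at deeper levels. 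Your multi-level route can be repaired (run it on an extremal $\C H$, phrase every hypothesis relative to $\pi(\CC)$, and add the dichotomy at each level), but that is precisely the extra work the paper's one-step argument sidesteps; the genuine level-by-level recursion appears in the paper only in the proof of the stability theorem, where $\pi(\CC)=\alpha$ is already known.
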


\begin{theorem}\label{THM:C5K4-stability'}
 Let $\CC$ be as in~\eqref{equ:def-C'}.  Then for every $\varepsilon > 0$, there exist $\delta > 0$ and $n_0$ such that every $\CC$-free $3$-graph with $n\ge n_0$ vertices and  at least $\alpha \binom{n}{3} - \delta n^3$ edges is a $\mathbb{B}_{\mathrm{rec}}$-subconstruction after removing at most $\varepsilon n^3$ edges.      
\end{theorem}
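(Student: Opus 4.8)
The plan is to prove both the upper bound on $\pi(\CC)$ and the stability statement simultaneously, using the two flag-algebra-assisted Propositions~\ref{pr:2} and~\ref{pr:3} as black boxes together with the ``local refinement'' observation highlighted in the proof outline. Fix $\varepsilon>0$ and suppose $\C H$ is a $\CC$-free $3$-graph on $n\to\infty$ vertices with at least $\alpha\binom n3-\delta n^3$ edges, where $\delta=\delta(\varepsilon)$ is to be chosen small. First I would do a standard cleaning: by Fact~\ref{FACT:min-deg-cleaning} applied to the ``degree'' weighting, pass to an induced subgraph on which $\C H$ is near-regular (every vertex has degree within $o(n^2)$ of the average), losing only $o(n^3)$ edges and keeping at least $(\alpha-o(1))\binom n3$ of them; this is the hypothesis needed to invoke Proposition~\ref{pr:3}. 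Then apply Proposition~\ref{pr:2} to obtain a partition $V_1\cup V_2=V(\C H)$ with $|\C H\cap\mathbb B[V_1,V_2]|\ge(0.408\ldots+o(1))\binom n3$. Now perform the key combinatorial step: repeatedly move a vertex from one side to the other whenever this strictly increases $|\C H\cap\mathbb B[V_1,V_2]|$. Since this quantity is a bounded integer that strictly increases at each step, the process terminates, and the resulting partition is \textbf{locally maximal} in the sense of Proposition~\ref{pr:3}, while still $|\C H\cap\mathbb B[V_1,V_2]|\ge(0.408\ldots+o(1))\binom n3$.

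With a near-regular $\C H$ and a locally maximal partition in hand, Proposition~\ref{pr:3} tells us that $|\C H\setminus\C H[V_2]|\le|\mathbb B[V_1,V_2]|-\frac1{4000}\bigl(|\mathbb B[V_1,V_2]|-|\C H\setminus\C H[V_2]|\bigr)+o(n^3)$, i.e.\ the number of ``extra'' edges across the partition exceeds neither the number of ``missing'' ones nor, more precisely, $\bigl(1-\tfrac1{4000}\bigr)$ times it, up to $o(n^3)$. Write $n_1=|V_1|$, $n_2=|V_2|$. The idea is now to recurse on $\C H[V_2]$. If $n_2\le \theta n$ for a suitable small constant $\theta$ we stop; otherwise $\C H[V_2]$ is again $\CC$-free, and — provided $|\C H[V_2]|$ is not too small — we repeat cleaning, Proposition~\ref{pr:2}, local refinement and Proposition~\ref{pr:3} inside $V_2$. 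I would package this as an induction on $n$ establishing the quantitative bound $|\C H|\le b_{\mathrm{rec}}(n)+o(n^3)$: the contribution $\binom{n_1}2 n_2$ of a ``perfect'' top level is the maximum possible over the cross triples by definition of $b_{\mathrm{rec}}$, and Proposition~\ref{pr:3} shows any surplus of non-$\mathbb B$ cross edges is overwhelmed (by a factor bounded away from $1$) by the resulting deficit, so no recursive branch can beat $b_{\mathrm{rec}}$. Summing the geometric-type series over the $O(\log n)$ levels of recursion, with each level contributing an $o(n^3)$ error and the part sizes shrinking geometrically, yields $|\C H|\le(\alpha+o(1))\binom n3$, which is Theorem~\ref{THM:turan-density-C5K4'}.

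For the stability conclusion I would track the structure, not just the count. If $\C H$ has $\ge\alpha\binom n3-\delta n^3$ edges, then every inequality used above must be tight up to $o(n^3)$: at the top level the cross edges must ``essentially fill'' $\mathbb B[V_1,V_2]$ (the surplus/deficit trade-off of Proposition~\ref{pr:3} forces the deficit, hence also the surplus, to be $o(n^3)$, so $\C H\setminus\C H[V_2]$ and $\mathbb B[V_1,V_2]$ differ in $o(n^3)$ triples), the ratio $n_1/n$ must be within $o(1)$ of the optimiser, and $\C H[V_2]$ must itself have $\ge\alpha\binom{n_2}3-\delta' n_2^3$ edges for a comparable $\delta'$. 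Iterating down through the levels until the part size drops below $\varepsilon' n$, and then simply deleting all $\le\binom{\varepsilon' n}3=O(\varepsilon'^3 n^3)$ triples inside that last part, we see that $\C H$ agrees with a genuine $\mathbb B_{\mathrm{rec}}$-construction — equivalently is a $\mathbb B_{\mathrm{rec}}$-subconstruction — after editing at most $\varepsilon n^3$ triples, provided $\delta$, $\theta$, $\varepsilon'$ are chosen small enough relative to $\varepsilon$ and the number of recursion levels is absorbed. One technical point to handle carefully: the error terms and the ``$\delta'$'' at deeper levels must not blow up, so the recursion should be set up with a fixed geometric decay in the part sizes (guaranteed because a near-optimal top level has $n_1=\Theta(n)$) and the number of levels that actually matter is $O(\log(1/\varepsilon'))$, i.e.\ a constant once $\varepsilon$ is fixed; this keeps all accumulated errors $o(n^3)$ plus $O(\varepsilon'^3 n^3)$.

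\medskip\noindent\textbf{Main obstacle.} The genuinely hard content is entirely inside Propositions~\ref{pr:2} and~\ref{pr:3} — establishing, via the flag algebra SDP augmented with the manual ``local refinement'' constraint, both the $0.408\ldots$ lower bound on the $\mathbb B$-part and the crucial factor-$(1-\tfrac1{4000})$ trade-off between surplus and deficit cross edges under local maximality and near-regularity. Granting those, the remaining difficulty in the present theorem is bookkeeping: setting up the recursion so that (a) ``near-regular'' can be re-established at each level after the cleaning step without eroding the edge count below the threshold needed to re-apply Proposition~\ref{pr:2}, and (b) the $o(n^3)$ errors, which are really of the form $f(n_i)$ at a part of size $n_i$, sum to $o(n^3)$ over all levels — this is where the geometric shrinkage of part sizes (forced by near-optimality at each level) is essential, and where one must be slightly careful at the ``boundary'' levels where a part first becomes smaller than a constant fraction of $n$.
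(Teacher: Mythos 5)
Your proposal follows essentially the same route as the paper: the paper packages your top-level step as Lemma~\ref{LEMMA:recursion-upper-bound} (pass to a near-regular subgraph, take a maximum --- hence locally maximal --- cut, apply Propositions~\ref{pr:2} and~\ref{pr:3} to trade bad edges against missing ones), deduces from it a per-level weak stability statement (Lemma~\ref{LEMMA:weak-stability}: part ratio within $o(1)$ of $\gamma$, $|B|,|M|=o(n^3)$, and $\mathcal{H}[V_2]$ again near-extremal), and then iterates through a bounded number of levels with a hierarchy of constants before deleting all triples inside the final small part, exactly as you describe. Two remarks: the near-regularity needed for Proposition~\ref{pr:3} cannot come from Fact~\ref{FACT:min-deg-cleaning} alone --- deleting low-degree vertices (Fact~\ref{FACT:near-extremal-small-deg}) only gives the lower bound on degrees, and the matching upper bound on $\Delta(\mathcal{H})$ is where blowup-invariance enters, via Proposition~\ref{PROP:C5minus-max-degree}~\ref{PROP:C5minus-max-degree-1}; and for the Tur\'an density itself the paper avoids your full recursion by applying the lemma once to an extremal graph and using the concentration of $\mathrm{ex}(N,\CC)/N^3$ around its limit, a difference that is immaterial for the stability argument.
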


Following the definition in~\cite{LMR23unified}, a family $\mathcal{F}$ of $r$-graphs is \textbf{blowup-invariant} if every blowup of an $\mathcal{F}$-free $r$-graph remains $\mathcal{F}$-free. It is routine to check that the family $\mathcal{C}'$ is blowup-invariant.
We will use the following two properties of blowup-invariant families, referred to as \textbf{boundedness} and \textbf{smoothness} in~\cite{HLLYZ23}. 
\begin{proposition}\label{PROP:blowup-invariant-bounded}
    Let $r\ge 2$. Suppose that a family $\mathcal{F}$ of $r$-graphs is blowup-invariant. Then the following statements hold. 
    \begin{enumerate}[label=(\roman*)]
        \item\label{PROP:blowup-invariant-bounded-1} 
        For every $\delta \in \left(0, 1/(2r)^r\right)$, there exists $n_0 = n_0(\delta, \mathcal{F})$ such that every  $\mathcal{F}$-free $r$-graph $\mathcal{H}$ with $n\ge n_0$ vertices and at least $\left(\frac{\pi(\mathcal{F})}{r!} - \delta\right) n^{r}$ edges satisfies $\Delta(\mathcal{H}) \le \left( \frac{\pi(\mathcal{F})}{(r-1)!} + 10 \delta^{1/r} \right) n^{r-1}$.
        \item\label{PROP:blowup-invariant-bounded-2} Suppose that $\mathcal{H}$ is an $\mathcal{F}$-free $r$-graph on $n$ vertices with exactly  $\mathrm{ex}(n,\mathcal{F})$ edges. 
        Then $\Delta(\mathcal{H}) - \delta(\mathcal{H}) \le \binom{n-2}{r-2}$.
    \end{enumerate}
\end{proposition}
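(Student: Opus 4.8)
Both parts rest on a single mechanism available for blowup-invariant families: deleting a vertex and re-inserting it as a clone of another vertex yields another $\mathcal F$-free $r$-graph, so the extremal/near-extremal edge count constrains how much a single vertex can deviate from the average.

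\emph{Part \ref{PROP:blowup-invariant-bounded-1}.} The plan is a standard ``deletion then cloning'' argument. Let $\mathcal H$ be as in the hypothesis with $|\mathcal H|\ge (\pi(\mathcal F)/r!-\delta)n^r$, and let $v$ be a vertex of maximum degree $\Delta:=\Delta(\mathcal H)$. First I would delete $v$ to obtain $\mathcal H'=\mathcal H\setminus\{v\}$ on $n-1$ vertices, which is still $\mathcal F$-free, hence $|\mathcal H'|\le\mathrm{ex}(n-1,\mathcal F)\le\pi(\mathcal F)\binom{n-1}{r}$ (using monotonicity of $\mathrm{ex}(m,\mathcal F)/\binom mr$). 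Since $|\mathcal H'|=|\mathcal H|-\Delta$, this gives an upper bound $\Delta\le |\mathcal H|-|\mathcal H'|$ of the wrong direction on its own; the correct direction comes from the \emph{reverse} operation. Instead, take the $\mathcal F$-free $r$-graph $\mathcal H''$ obtained from $\mathcal H\setminus\{v\}$ by cloning a fixed vertex $u\ne v$ (so $\mathcal H''$ has $n$ vertices); blowup-invariance gives $\mathcal F$-freeness. Then
\begin{align*}
\pi(\mathcal F)\binom nr \;\ge\; \mathrm{ex}(n,\mathcal F)\;\ge\;|\mathcal H''|\;=\;|\mathcal H|-\Delta+\big(d_{\mathcal H\setminus\{v\}}(u)\big)\;\ge\;|\mathcal H|-\Delta,
\end{align*}
so $\Delta\le |\mathcal H|-\pi(\mathcal F)\binom nr + \binom nr\cdot\big(\pi(\mathcal F)-\text{(true density)}\big)$; cleaning this up with $|\mathcal H|\le\pi(\mathcal F)\binom nr$ only yields $\Delta\le\binom nr\,o(1)$, too weak. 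The honest route is: from $\sum_w d_{\mathcal H}(w)=r|\mathcal H|\ge r(\pi(\mathcal F)/r!-\delta)n^r$ we get average degree $\ge(\pi(\mathcal F)/(r-1)!-r\delta)n^{r-1}$; and for the upper bound on $\Delta$ one argues that if $d_{\mathcal H}(v)$ were too large, then iterating the clone-of-$v$ operation (replace $t$ arbitrary vertices by clones of $v$, each clone contributing roughly $d_{\mathcal H}(v)$ new edges once we also count pairs among clones) builds an $\mathcal F$-free $r$-graph whose density exceeds $\pi(\mathcal F)/r!+\text{const}$, a contradiction for $n$ large. Quantitatively, cloning $v$ a $\delta^{1/r}n$ times produces $\approx \binom{\delta^{1/r}n}{2}d_{\mathcal H}(v)/(n)\cdot\dots$ extra edges; matching this against the slack $\delta n^r$ forces $\Delta(\mathcal H)\le(\pi(\mathcal F)/(r-1)!+10\delta^{1/r})n^{r-1}$. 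I expect the bookkeeping of exactly how many new edges $t$ clones of $v$ create (edges through one clone, through two clones, etc., and subtracting the $t$ deleted vertices' contributions) to be the one slightly delicate point, but it is a finite computation; the choice $t=\Theta(\delta^{1/r}n)$ is what balances the ``gain'' against the ``loss''.

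\emph{Part \ref{PROP:blowup-invariant-bounded-2}.} Here $\mathcal H$ is exactly extremal on $n$ vertices. Let $x$ achieve $d_{\mathcal H}(x)=\delta(\mathcal H)$ and $y$ achieve $d_{\mathcal H}(y)=\Delta(\mathcal H)$. Form $\mathcal H^\star$ by deleting $x$ and then cloning $y$: that is, $\mathcal H^\star$ has the same vertex set size $n$, vertex set $(V(\mathcal H)\setminus\{x\})\cup\{y'\}$ where $y'$ is a new clone of $y$. By blowup-invariance $\mathcal H^\star$ is $\mathcal F$-free, so $|\mathcal H^\star|\le\mathrm{ex}(n,\mathcal F)=|\mathcal H|$. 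Now count: deleting $x$ removes $d_{\mathcal H}(x)=\delta(\mathcal H)$ edges; adding the clone $y'$ of $y$ into $\mathcal H\setminus\{x\}$ adds $d_{\mathcal H\setminus\{x\}}(y)$ edges through $y'$ together with the edges $\{y,y'\}\cup S$ for all $(r-2)$-sets $S$ in a common link of $y$ and $y'$ (i.e.\ $S\subseteq N_{L}$), of which there are exactly $|\{S\in\binom{V\setminus\{x,y\}}{r-2}: S\cup\{y\}\in L_{\mathcal H}(y)\}|$. The latter is at most $\binom{n-2}{r-2}$ (crudely, the number of all $(r-2)$-subsets of the remaining $n-2$ vertices), while $d_{\mathcal H\setminus\{x\}}(y)\ge d_{\mathcal H}(y)-\binom{n-2}{r-2}=\Delta(\mathcal H)-\binom{n-2}{r-2}$ since deleting $x$ kills at most $\binom{n-2}{r-2}$ edges through $y$. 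Hence
\begin{align*}
|\mathcal H|\;\ge\;|\mathcal H^\star|\;=\;|\mathcal H|-\delta(\mathcal H)+d_{\mathcal H\setminus\{x\}}(y)+\#\{\text{edges through both }y,y'\}\;\ge\;|\mathcal H|-\delta(\mathcal H)+\big(\Delta(\mathcal H)-\binom{n-2}{r-2}\big),
\end{align*}
which rearranges exactly to $\Delta(\mathcal H)-\delta(\mathcal H)\le\binom{n-2}{r-2}$. The only point requiring care is that $x$ might equal $y$ (then the statement is trivial as $\Delta=\delta$) and that when $x\ne y$ one must make sure $x\notin\{y\}$ so the clone operation is well-defined on $V\setminus\{x\}$; both are immediate. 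The main conceptual obstacle, as the correction note indicates, is getting the \emph{direction} right: one must add the clone of the \emph{max}-degree vertex after deleting the \emph{min}-degree vertex (not the other way around), so that the net change $-\delta(\mathcal H)+(\ge\Delta(\mathcal H)-\binom{n-2}{r-2})$ is forced to be $\le 0$ by extremality.
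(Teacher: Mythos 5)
Your part~\ref{PROP:blowup-invariant-bounded-2} is essentially the paper's own argument: delete a minimum-degree vertex, duplicate a maximum-degree vertex, and compare with $\mathrm{ex}(n,\mathcal F)$; the bound $\Delta(\mathcal H)-\delta(\mathcal H)\le\binom{n-2}{r-2}$ falls out exactly as you write. One correction: in a blowup no edge ever contains two clones of the same vertex, so the edges $\{y,y'\}\cup S$ you propose to add are not produced by cloning, and if you did add them the resulting $r$-graph would no longer be guaranteed $\mathcal F$-free by blowup-invariance --- which is the only thing giving you $|\mathcal H^\star|\le\mathrm{ex}(n,\mathcal F)$. Since you discard that term anyway, the fix is simply to define $\mathcal H^\star$ without those edges; then $|\mathcal H^\star|=|\mathcal H|-\delta(\mathcal H)+\Delta(\mathcal H)-d_{\mathcal H}(xy)\ge|\mathcal H|-\delta(\mathcal H)+\Delta(\mathcal H)-\binom{n-2}{r-2}$ and the argument is complete, matching the paper.

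Part~\ref{PROP:blowup-invariant-bounded-1} has a genuine gap: the quantitative step that \emph{is} the statement (where $10\delta^{1/r}$ comes from) is never carried out, and the route you sketch would fail as described. Replacing $t$ \emph{arbitrary} vertices by clones of $v$ can lose more than it gains: a deleted vertex may have degree up to $\binom{n-1}{r-1}\approx n^{r-1}/(r-1)!$, which exceeds the gain of roughly $\Delta(\mathcal H)$ per clone whenever $\pi(\mathcal F)<1$, so the edge count need not increase at all; one would have to delete minimum-degree vertices and track codegree corrections, none of which appears in your sketch. Moreover, the ``pairs among clones'' you invoke (and the expression $\binom{\delta^{1/r}n}{2}d_{\mathcal H}(v)/n$) do not correspond to anything: clones of a single vertex never span a common edge of a blowup, so each clone contributes exactly $d(v)$ edges and nothing more, and you cannot adjoin clone--clone edges without losing $\mathcal F$-freeness. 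The paper's proof sidesteps all of this by deleting nothing: add $\delta^{1/r}n$ clones of the maximum-degree vertex, so the new graph has exactly $|\mathcal H|+\delta^{1/r}n\cdot\Delta(\mathcal H)$ edges on $(1+\delta^{1/r})n$ vertices, and compare with $\mathrm{ex}\bigl((1+\delta^{1/r})n,\mathcal F\bigr)\le\bigl(\pi(\mathcal F)/r!+\delta\bigr)\bigl(1+\delta^{1/r}\bigr)^r n^r$; expanding $\bigl(1+\delta^{1/r}\bigr)^r\le 1+r\delta^{1/r}+2\binom{r}{2}\delta^{2/r}$ and using the assumed lower bound $\Delta(\mathcal H)>\bigl(\pi(\mathcal F)/(r-1)!+10\delta^{1/r}\bigr)n^{r-1}$ gives a lower bound of $(1+r\delta^{1/r})\frac{\pi(\mathcal F)}{r!}n^r+9\delta^{2/r}n^r$ against an upper bound of $(1+r\delta^{1/r})\frac{\pi(\mathcal F)}{r!}n^r+3\delta^{2/r}n^r$, the desired contradiction. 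You should either adopt this version or supply the full bookkeeping for a fixed-$n$ swap with carefully chosen deleted vertices; as it stands, part~\ref{PROP:blowup-invariant-bounded-1} is not proved.
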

\begin{proof}[Proof of Proposition~\ref{PROP:blowup-invariant-bounded}]
    First, we prove Proposition~\ref{PROP:blowup-invariant-bounded}~\ref{PROP:blowup-invariant-bounded-1}. 
    Fix $r\ge 2$ and $\delta \in \left(0, 1/(2r)^r\right)$, noting that 
    \begin{align}\label{equ:PROP:blowup-invariant-bounded-a}
        2\binom{r}{2}\frac{\pi(\mathcal{F})}{r!}
        \le 2\binom{r}{2}\frac{1}{r!} 
        \le 1
        \quad\text{and}\quad 
        1 + r \delta^{1/r} + 2\binom{r}{2} \delta^{2/r} 
        \le 2. 
    \end{align}
    Let $\varepsilon \coloneqq 10 \delta^{1/r}$ and $n$ be sufficiently large. 
    Suppose to the contrary that there exists an $\mathcal{F}$-free $r$-graph $\mathcal{H}$ on $n$ vertices with at least $\left(\frac{\pi(\mathcal{F})}{r!} - \delta\right) n^{r}$ edges, but $\Delta(\mathcal{H}) > \left( \frac{\pi(\mathcal{F})}{(r-1)!} + \varepsilon \right) n^{r-1}$.
    Let $v_{\ast} \in V(\mathcal{H})$ be a vertex of maximum degree. 
    Let $\mathcal{G}$ be the $r$-graph obtained from $\mathcal{H}$ by duplicating $v_{\ast}$ for $\delta^{1/r} n$ times. 
    Note that $\mathcal{G}$ is an $r$-graph on $(1+\delta^{1/r}) n$ vertices. 
    Since $\mathcal{F}$ is blowup-invariant, $\mathcal{G}$ remains $\mathcal{F}$-free.
    Also, it follows from the definition of $\mathcal{G}$ that 
    \begin{align}\label{equ:PROP:blowup-invariant-bounded-G-lower-bound}
        |\mathcal{G}|
        = |\mathcal{H}| + \delta^{1/r} n \cdot \Delta(\mathcal{H}) 
        & > \left(\frac{\pi(\mathcal{F})}{r!} - \delta\right) n^{r} + \delta^{1/r} \left( \frac{\pi(\mathcal{F})}{(r-1)!} + \varepsilon \right) n^{r} \notag \\ 
        & = \left(1+r\delta^{1/r}\right)\frac{\pi(\mathcal{F})}{r!}\, n^{r} + \left(\delta^{1/r} \varepsilon - \delta\right) n^r \notag \\
        & \ge \left(1+r\delta^{1/r}\right)\frac{\pi(\mathcal{F})}{r!}\, n^{r} + 9 \delta^{2/r} n^r, 
    \end{align}
    where in the last inequality, we used the assumption that $\varepsilon = 10 \delta^{1/r}$. 
    
    On the other hand, since $n$ is sufficiently large, we have 
    \begin{align*}
        |\mathcal{G}|
        \le \mathrm{ex}(n+\delta^{1/r}n, \mathcal{F}) 
        \le \left(\frac{\pi(\mathcal{F})}{r!} + \delta\right) \left(1 + \delta^{1/r}\right)^{r} n^{r}.
    \end{align*}
    Using~\eqref{equ:PROP:blowup-invariant-bounded-a} and the inequality $(1+x)^{r} \le 1+rx + 2\binom{r}{2}x^2$ for $x\in [0,1/2^r]$, we obtain  
    \begin{align*}
        |\mathcal{G}| 
        & \le \left(\frac{\pi(\mathcal{F})}{r!} + \delta\right) \left(1 + r \delta^{1/r} + 2\binom{r}{2} \delta^{2/r}\right) n^{r} \\
        & = \left(1 + r \delta^{1/r} \right)\frac{\pi(\mathcal{F})}{r!} n^{r} + 2\binom{r}{2} \delta^{2/r} \frac{\pi(\mathcal{F})}{r!} n^{r} + \left(1 + r \delta^{1/r} + 2\binom{r}{2} \delta^{2/r}\right) \delta n^{r} \\
        & \le \left(1 + r \delta^{1/r} \right)\frac{\pi(\mathcal{F})}{r!} n^{r} + \delta^{2/r} n^{r} + 2 \delta n^{r}
        < \left(1 + r \delta^{1/r} \right)\frac{\pi(\mathcal{F})}{r!} n^{r} + 3\delta^{2/r} n^{r}, 
    \end{align*}
    which is a contradiction to~\eqref{equ:PROP:blowup-invariant-bounded-G-lower-bound}. 

    Now we prove Proposition~\ref{PROP:blowup-invariant-bounded}~\ref{PROP:blowup-invariant-bounded-2}. 
    Fix $n \in \mathbb{N}$. Suppose to the contrary that there exists an $\mathcal{F}$-free $r$-graph $\mathcal{H}$ on $n$ vertices with exactly $\mathrm{ex}(n,\mathcal{F})$ edges, but $\Delta(\mathcal{H}) - \delta(\mathcal{H}) > \binom{n-2}{r-2}$.
    Let $v_{\ast} \in V(\mathcal{H})$ be a vertex of maximum degree and let $u_{\ast} \in V(\mathcal{H})$ be a vertex of minimum degree. 
    Let $\mathcal{G}$ be the $r$-graph obtained from $\mathcal{H}$ by first removing the vertex $u_{\ast}$ and then duplicating $v_{\ast}$ once. 
    Since $\mathcal{F}$ is blowup-invariant, $\mathcal{G}$ remains $\mathcal{F}$-free. 
    However, 
    \begin{align*}
        |\mathcal{G}|
        & = |\mathcal{H}| - d_{\mathcal{H}}(u_{\ast}) + d_{\mathcal{H}}(v_{\ast}) - d_{\mathcal{H}}(u_{\ast} v_{\ast}) \\
        & \ge \mathrm{ex}(n,\mathcal{F}) - \delta(\mathcal{H}) + \Delta(\mathcal{H}) - \binom{n-2}{r-2} 
        > \mathrm{ex}(n,\mathcal{F}), 
    \end{align*}
    a contradiction. 
\end{proof}

It is easy to see that the family $\{C_{4}^{3}, C_{5}^{3}\}$ is blowup-invariant, since the shadows of both $C_{4}^{3}$ and $C_{5}^{3}$ are  complete. 
It is also straightforward to verify that the family $\{C_{4}^{3}, F_1, F_2\}$ is blowup-invariant.
Therefore, Proposition~\ref{PROP:blowup-invariant-bounded} immediately yields the following corollary.
%
\begin{proposition}\label{PROP:C5minus-max-degree}  
    For every $\delta \in \left(0, 1/6^3\right)$ there exists $n_0$ such that the following statements hold for all $n \ge n_0$. 
    \begin{enumerate}[label=(\roman*)]
        \item\label{PROP:C5minus-max-degree-1} 
        Suppose that $\mathcal{H}$ is a $\CC$-free $3$-graph on $n$ vertices with at least $\left(\frac{\pi(\CC)}{6} - \delta\right)n^3$ edges. 
        Then $\Delta(\mathcal{H}) \le \left(\frac{\pi(\CC)}{2} + 10\delta^{1/3} \right)n^2$.
        \item\label{PROP:C5minus-max-degree-2} Suppose that $\mathcal{H}$ is a $\CC$-free $3$-graph on $n$ vertices with exactly $\mathrm{ex}(n,\CC)$ edges. 
        Then $\delta(\mathcal{H}) \ge \left(\frac{\pi(\CC)}{2} - \delta \right)n^2$.\qed
    \end{enumerate}
\end{proposition}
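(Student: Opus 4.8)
The plan is to obtain this as a direct consequence of Proposition~\ref{PROP:blowup-invariant-bounded} applied with $r=3$, which is legitimate because, as observed just above the statement, both families in~\eqref{equ:def-C'} are blowup-invariant. For part~\ref{PROP:C5minus-max-degree-1} I would simply unwind the parameters: when $r=3$ one has $(2r)^r = 6^3$, $r! = 6$, $(r-1)! = 2$ and $\delta^{1/r} = \delta^{1/3}$, so Proposition~\ref{PROP:blowup-invariant-bounded}~\ref{PROP:blowup-invariant-bounded-1} is word-for-word the statement of part~\ref{PROP:C5minus-max-degree-1}, with the same threshold $n_0$; there is nothing further to check.

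For part~\ref{PROP:C5minus-max-degree-2} I would combine the codegree-gap bound of Proposition~\ref{PROP:blowup-invariant-bounded}~\ref{PROP:blowup-invariant-bounded-2}, which for $r=3$ reads $\Delta(\mathcal{H}) - \delta(\mathcal{H}) \le \binom{n-2}{1} = n-2$, with a lower bound on $\Delta(\mathcal{H})$ coming from the edge count. Since the maximum degree is at least the average degree and $\sum_{v\in V(\mathcal{H})} d_{\mathcal{H}}(v) = 3|\mathcal{H}|$, we get $\Delta(\mathcal{H}) \ge 3|\mathcal{H}|/n = 3\,\mathrm{ex}(n,\CC)/n$. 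By the Katona--Nemetz--Simonovits monotonicity of $\mathrm{ex}(n,\CC)/\binom{n}{3}$ recalled in the introduction, $\mathrm{ex}(n,\CC) \ge \pi(\CC)\binom{n}{3}$, whence $\Delta(\mathcal{H}) \ge \tfrac{3\pi(\CC)}{n}\binom{n}{3} = \tfrac{\pi(\CC)}{2}(n-1)(n-2)$. Therefore $\delta(\mathcal{H}) \ge \tfrac{\pi(\CC)}{2}(n-1)(n-2) - (n-2) \ge \tfrac{\pi(\CC)}{2}n^2 - 3n$, where the last step uses the trivial bound $\pi(\CC) \le 1$; this is at least $\bigl(\tfrac{\pi(\CC)}{2} - \delta\bigr)n^2$ once $n \ge 3/\delta$. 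So I would take $n_0$ to be the maximum of the threshold supplied by Proposition~\ref{PROP:blowup-invariant-bounded} and $\lceil 3/\delta\rceil$.

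Since the whole argument reduces to invoking an already-proved proposition together with an elementary averaging estimate, there is no genuine obstacle; the only things needing (routine) care are matching the numerical constants for $r=3$ and noting that $\pi(\CC)\le 1$ because any $3$-graph on $n$ vertices has at most $\binom{n}{3}$ edges.
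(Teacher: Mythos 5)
Your proposal is correct and follows exactly the route the paper takes: the paper derives this proposition as an immediate corollary of Proposition~\ref{PROP:blowup-invariant-bounded} (after noting blowup-invariance of $\CC$), with part~\ref{PROP:C5minus-max-degree-1} being the $r=3$ specialisation and part~\ref{PROP:C5minus-max-degree-2} following from the codegree-gap bound together with the routine averaging and monotonicity estimates you spell out.
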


The following fact about small-degree vertices in near-extremal $\mathcal{F}$-free hyprgraphs can be proved using a simple deletion argument and has appeared in various places (see e.g.~{\cite[Lemma~4.2]{LMR1}} and~{\cite[Fact~2.5]{LMR23unified}}). 
For completeness, we include its proof here. 

\begin{fact}\label{FACT:near-extremal-small-deg}
    Let $\mathcal{F}$ be a family of $r$-graphs.
    For every $\delta > 0$ there exists $N_0$ such that the following holds for all $n \ge N_0$. 
    Suppose that $\mathcal{H}$ is an $\mathcal{F}$-free $r$-graph on $n$ vertices with $|\mathcal{H}| \ge \left(\pi(\mathcal{F})/r! - \delta \right)n^{r}$.
    Then the set 
    \begin{align}\label{equ:Z-delta-H-def}
        Z_{\delta}(\mathcal{H})
        \coloneqq \left\{v\in V(\mathcal{H}) \colon d_{\mathcal{H}}(v) \le \left(\pi(\mathcal{F})/(r-1)! - 4 \delta^{1/2} \right)n^{r-1}\right\} 
    \end{align}
    has size at most $\delta^{1/2} n$. 
\end{fact}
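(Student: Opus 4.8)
The plan is to use a standard deletion argument. Suppose for contradiction that $|Z_\delta(\mathcal{H})| > \delta^{1/2} n$. The idea is to delete vertices from $Z_\delta(\mathcal{H})$ one at a time, each time removing a vertex of low degree in the current hypergraph, and track how many edges are lost.

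First I would set $N_0$ large enough that various lower-order terms are negligible, and let $z := \lceil \delta^{1/2} n \rceil$. Greedily pick vertices $v_1, v_2, \dots, v_z$ as follows: having removed $v_1, \dots, v_{i-1}$, if the current hypergraph $\mathcal{H}_{i-1} := \mathcal{H} \setminus \{v_1, \dots, v_{i-1}\}$ still contains a vertex of $Z_\delta(\mathcal{H})$, pick $v_i$ to be such a vertex. The key point is that each $v_i$ had degree at most $\left(\pi(\mathcal{F})/(r-1)! - 4\delta^{1/2}\right)n^{r-1}$ \emph{in $\mathcal{H}$}, and deleting earlier vertices only decreases its degree, so $d_{\mathcal{H}_{i-1}}(v_i) \le \left(\pi(\mathcal{F})/(r-1)! - 4\delta^{1/2}\right)n^{r-1}$ as well. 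Hence after removing all $z$ vertices, the number of edges lost is at most
\begin{align*}
    z \cdot \left(\frac{\pi(\mathcal{F})}{(r-1)!} - 4\delta^{1/2}\right)n^{r-1}
    \le \left(\delta^{1/2} n + 1\right)\left(\frac{\pi(\mathcal{F})}{(r-1)!} - 4\delta^{1/2}\right)n^{r-1}.
\end{align*}

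Next I would compare this with the edge count. The resulting hypergraph $\mathcal{H}' := \mathcal{H}_z$ has $n - z$ vertices and is $\mathcal{F}$-free, so $|\mathcal{H}'| \le \mathrm{ex}(n-z, \mathcal{F}) \le \left(\pi(\mathcal{F})/r! + o(1)\right)(n-z)^r \le \left(\pi(\mathcal{F})/r! + o(1)\right)n^r$, using the definition of Tur\'an density (and monotonicity of $\mathrm{ex}(m,\mathcal{F})/\binom{m}{r}$) for $N_0$ large. On the other hand,
\begin{align*}
    |\mathcal{H}'|
    \ge |\mathcal{H}| - z\left(\frac{\pi(\mathcal{F})}{(r-1)!} - 4\delta^{1/2}\right)n^{r-1}
    \ge \left(\frac{\pi(\mathcal{F})}{r!} - \delta\right)n^r - \left(\delta^{1/2} n + 1\right)\left(\frac{\pi(\mathcal{F})}{(r-1)!} - 4\delta^{1/2}\right)n^{r-1}.
\end{align*}
Expanding, the dominant terms give roughly $\left(\frac{\pi(\mathcal{F})}{r!} - \delta\right)n^r - \delta^{1/2}\cdot\frac{\pi(\mathcal{F})}{(r-1)!}n^r + 4\delta\, n^r + o(n^r)$, and the point is that the positive $4\delta\, n^r$ gained from the $4\delta^{1/2}$ savings beats the $-\delta\, n^r$ loss, so $|\mathcal{H}'| \ge \left(\frac{\pi(\mathcal{F})}{r!} + \delta - \delta^{1/2}\frac{\pi(\mathcal{F})}{(r-1)!} + o(1)\right)n^r$. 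Wait — I should be careful here: the term $-\delta^{1/2}\frac{\pi(\mathcal{F})}{(r-1)!}n^r$ is itself a loss of order $\delta^{1/2}n^r$, which swamps the $4\delta\, n^r$ gain. The resolution is that $|\mathcal{H}'|$ loses \emph{fewer} vertices than a full proportion suggests relative to $\mathrm{ex}(n-z,\mathcal{F})$: I should instead compare $|\mathcal{H}'|$ directly against $\mathrm{ex}(n-z,\mathcal{F})$, not against $\mathrm{ex}(n,\mathcal{F})$. Since $\mathrm{ex}(n-z,\mathcal{F}) \le \frac{\pi(\mathcal{F})}{r!}(n-z)^r + o(n^r)$ and $(n-z)^r \le n^r - r z n^{r-1} + O(z^2 n^{r-2}) = n^r - r\delta^{1/2}n^r + o(n^r)$, the upper bound on $|\mathcal{H}'|$ also drops by about $\delta^{1/2}\frac{\pi(\mathcal{F})}{(r-1)!}n^r$, exactly matching the leading loss on the lower-bound side. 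After cancelling these matching $\delta^{1/2}$-order terms, the leftover is the $+4\delta\, n^r$ gain against the $-\delta\, n^r$ from the hypothesis plus $o(n^r)$ error, yielding $0 < 3\delta\, n^r - o(n^r)$ violated — i.e. the two bounds on $|\mathcal{H}'|$ are inconsistent for $N_0$ large.

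I expect the main obstacle to be exactly this bookkeeping: making sure the vertex-count correction $(n-z)^r$ versus $n^r$ is handled consistently on both sides so that the $\delta^{1/2}$-order terms cancel and only the $\delta$-order discrepancy survives. Concretely, I would phrase the final contradiction as: $\mathrm{ex}(n-z,\mathcal{F}) \ge |\mathcal{H}'| \ge |\mathcal{H}| - z\left(\frac{\pi(\mathcal{F})}{(r-1)!} - 4\delta^{1/2}\right)n^{r-1}$, combine with $|\mathcal{H}| \ge \left(\frac{\pi(\mathcal{F})}{r!} - \delta\right)n^r$ and the averaging bound $\mathrm{ex}(n-z,\mathcal{F}) \le \frac{\mathrm{ex}(n,\mathcal{F})}{\binom{n}{r}}\binom{n-z}{r} \le \left(\frac{\pi(\mathcal{F})}{r!} + o(1)\right)\binom{n-z}{r}$, and then verify the resulting polynomial inequality in $n$ fails once $n \ge N_0$, using $z = \Theta(\delta^{1/2}n)$ and that the coefficient of the leading term works out to be strictly positive (of order $\delta$). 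This is routine once set up carefully, so no step is genuinely hard; the only thing to watch is not to be sloppy with which ambient vertex count each $\mathrm{ex}$ is evaluated at.
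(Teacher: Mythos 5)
Your proposal is correct and is essentially the paper's own argument: the paper deletes a set of $\delta^{1/2}n$ low-degree vertices in one step and compares the remaining edge count with $\mathrm{ex}\bigl((1-\delta^{1/2})n,\mathcal{F}\bigr)$, so that the $\delta^{1/2}$-order terms cancel and the $4\delta$ gain from the degree deficit beats the $\delta$ slack (plus the quadratic correction from $(1-\delta^{1/2})^{r}$, which is of order $\delta n^{r}$ rather than $o(n^{r})$ as you labelled it, but is harmlessly absorbed since $4\delta-\delta-\binom{r}{2}\frac{\pi(\mathcal{F})}{r!}\delta>\delta$). Your one-vertex-at-a-time deletion with degrees measured in $\mathcal{H}$ is just a reformulation of the same edge-loss bound, so no substantive difference.
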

\begin{proof}[Proof of Fact~\ref{FACT:near-extremal-small-deg}]
    Fix $\delta > 0$. 
    Let $\varepsilon \coloneqq \delta^{1/2}$, noting that $- 4 \varepsilon \delta^{1/2} + \varepsilon^{2} + \delta < -\delta$. 
    Choose $n$ to be sufficiently large so that, in particular, 
    \begin{align*}
        \mathrm{ex}(m, \mathcal{F})
        \le \left(\frac{\pi(\mathcal{F})}{r!} + \delta \right)m^{r}
        \quad\text{for all}\quad m \ge \frac{n}{2}. 
    \end{align*}
    Suppose to the contrary that there exists an $n$-vertex $\mathcal{F}$-free $r$-graph $\mathcal{H}$ with $|\mathcal{H}| \ge \left(\pi(\mathcal{F})/r! - \delta \right)n^{r}$ and $|Z_{\delta}(\mathcal{H})| > \varepsilon n$.
    Let $Z \subseteq Z_{\delta}(\mathcal{H})$ be a subset of size $\varepsilon n$.
    Notice that 
    \begin{align*}
        |Z|  \left(\frac{\pi(\mathcal{F})}{(r-1)!} - 4 \delta^{1/2} \right)n^{r-1}
        & = \varepsilon n  \left(\frac{\pi(\mathcal{F})}{(r-1)!} - 4 \delta^{1/2} \right)n^{r-1} 
        =  \left(\frac{\pi(\mathcal{F})}{(r-1)!} \varepsilon - 4 \varepsilon \delta^{1/2} \right) n^{r}.
    \end{align*}
    Let $U \coloneqq V(\mathcal{H}) \setminus Z$ and let $\mathcal{G}$ denote the induced subgraph of $\mathcal{H}$ on $U$.  
    Using the inequalities $\frac{\pi(\mathcal{F})}{r!} \binom{r}{2} \le \frac{1}{r!} \binom{r}{2} \le 1$ and $(1-x)^r \le 1-rx+\binom{r}{2}x^2$ for $x\in [0,1]$, we obtain 
    \begin{align*}
        |\mathcal{G}|
        \le \mathrm{ex}\left(n - \varepsilon n, \mathcal{F} \right)  
        \le \left(\frac{\pi(\mathcal{F})}{r!} + \delta \right) \left(1-\varepsilon \right)^{r} n^{r} 
        & \le \frac{\pi(\mathcal{F})}{r!} \left(1- \varepsilon \right)^{r} n^{r} + \delta n^r \\
        & \le \left(\frac{\pi(\mathcal{F})}{r!} -  \frac{\pi(\mathcal{F})}{(r-1)!} \varepsilon +  \varepsilon^{2}\right) n^{r} + \delta n^{r}.
    \end{align*}
    It follows from the definition of $Z_{\delta}(\mathcal{H})$ that 
    \begin{align*}
        |\mathcal{H}|
        & \le |\mathcal{G}| + |Z|  \left(\frac{\pi(\mathcal{F})}{(r-1)!} - 4\delta^{1/2} \right)n^{r-1}  \\
        & \le \left(\frac{\pi(\mathcal{F})}{r!} -  \frac{\pi(\mathcal{F})}{(r-1)!} \varepsilon +  \varepsilon^{2}\right) n^{r} + \delta n^{r} 
                + \left(\frac{\pi(\mathcal{F})}{(r-1)!} \varepsilon - 4 \varepsilon \delta^{1/2} \right) n^{r}  \\ 
        & = \left(\frac{\pi(\mathcal{F})}{r!} - 4 \varepsilon \delta^{1/2} + \varepsilon^{2} + \delta \right)n^{r}
        < \left(\frac{\pi(\mathcal{F})}{r!} - \delta \right)n^{r}, 
    \end{align*}
    a contradiction to the assumption that $|\mathcal{H}| \ge \left(\pi(\mathcal{F})/r! - \delta \right)n^{r}$. 
\end{proof}

We will use the global constants $\alpha$ and $\gamma$ defined in~\eqref{equ:gamma-alpha-def}. Recall that they are respectively the upper bound on the Tur\'an density that we want to prove and the asymptotically optimal relative size for $V_1$ in a large $\mathbb{B}_{\mathrm{rec}}$-construction.
The \textbf{standard $1$-dimensional simplex} is
\begin{align*}
    \mathbb{S}^2
    \coloneqq \left\{(x_1, x_2) \in \mathbb{R}^2 \colon x_1+x_2 = 1 \text{ and } x_i \ge 0 \text{ for } i \in [2]\right\}.
\end{align*}
The following fact is straightforward to verify. 
\begin{fact}\label{FACT:ineqality}
    The following inequalities hold for every $(x_1, x_2) \in \mathbb{S}^{2}$ with $x_2 < 1 \colon$  
    \begin{enumerate}[label=(\roman*)]
        \item\label{FACT:ineqality-1} $\frac{x_1^2 x_2}{2\left(1 - x_2^3\right)} \le \frac{\alpha}{6}$. 
        \item\label{FACT:ineqality-2} Suppose that $x_1 \in \left[\frac{1}{2}, 1\right]$. Then  
        \begin{align*}
            \frac12\, x_1^2 x_2 + \frac{\alpha}{6}\, x_2^3 
            \le \frac{\alpha}{6} - \frac{1}{4} \left(x_1 - \gamma\right)^2.
        \end{align*}
    \end{enumerate}
\end{fact}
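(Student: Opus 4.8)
\textbf{Proof proposal for Fact~\ref{FACT:ineqality}.}

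The plan is to reduce both parts to single-variable calculus by substituting $x_1 = 1-x_2$ and writing $t \coloneqq x_2 \in [0,1)$. For part~\ref{FACT:ineqality-1}, I would clear denominators and verify the polynomial inequality $3(1-t)^2 t \le \alpha(1-t^3) = \alpha(1-t)(1+t+t^2)$ on $[0,1)$. Dividing by $(1-t) > 0$, this becomes $3t(1-t) \le \alpha(1+t+t^2)$, i.e.
\begin{align*}
    g(t) \coloneqq \alpha(1+t+t^2) - 3t + 3t^2 = (\alpha+3)t^2 + (\alpha-3)t + \alpha \ge 0
    \quad\text{for all } t\in[0,1).
\end{align*}
Since $\alpha+3 = 2\sqrt3 > 0$, the quadratic $g$ is convex, so it suffices to check that its discriminant is nonpositive: $(\alpha-3)^2 - 4\alpha(\alpha+3) \le 0$. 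Substituting $\alpha = 2\sqrt3-3$ (so $\alpha-3 = 2\sqrt3-6$ and $\alpha+3 = 2\sqrt3$) and expanding should give exactly $0$, reflecting that the extremal ratio $\gamma$ is where equality holds; the double root will sit at $t = 1-\gamma = (\sqrt3-1)/2$, consistent with the optimal part ratio. This is a routine discriminant computation.

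For part~\ref{FACT:ineqality-2}, with $x_1 = 1-t$ and $t \in [0, 1/2]$ (the constraint $x_1 \ge 1/2$), I would define
\begin{align*}
    h(t) \coloneqq \frac{\alpha}{6} - \frac14(1-t-\gamma)^2 - \frac12(1-t)^2 t - \frac{\alpha}{6} t^3
\end{align*}
and show $h(t) \ge 0$ on $[0,1/2]$. The natural approach is to observe that $h$ is a polynomial of degree~$3$ in $t$ which should vanish to second order at $t = 1-\gamma$: indeed, $\mathbb{B}_{\mathrm{rec}}$ with the optimal ratio achieves equality in the asymptotic recursion, and the quadratic penalty term $\frac14(x_1-\gamma)^2$ is precisely calibrated so that $t=1-\gamma$ is a double root of $h$. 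Thus I expect $h(t) = c\,(t - (1-\gamma))^2 (a - t)$ or $h(t) = (t-(1-\gamma))^2(\beta_0 + \beta_1 t)$ for explicit constants, and it remains only to check the sign of the linear factor on $[0,1/2]$, using that $1-\gamma = (\sqrt3-1)/2 \approx 0.366 \in [0,1/2]$. Concretely, I would expand $h$, match coefficients to extract the factorization, and confirm the leftover linear factor stays nonnegative on the interval; alternatively, check $h(0) \ge 0$, $h(1/2) \ge 0$, and $h(1-\gamma) = h'(1-\gamma) = 0$, which pins down a cubic with nonnegative leading behaviour on $[0,1/2]$.

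The main obstacle is purely bookkeeping: one must substitute the irrational value $\alpha = 2\sqrt3-3$ and $\gamma = (3-\sqrt3)/2$ and carry the $\sqrt3$ terms correctly through the expansions, verifying that the ``miraculous'' cancellations (zero discriminant in part~\ref{FACT:ineqality-1}, double root at $1-\gamma$ in part~\ref{FACT:ineqality-2}) actually occur — these are not accidental but encode the optimality of the $\mathbb{B}_{\mathrm{rec}}$-construction, so they must come out exactly. No conceptual difficulty is expected; a short symbolic computation (or careful hand expansion) settles both parts.
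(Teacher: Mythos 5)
Your plan is correct, and since the paper states Fact~\ref{FACT:ineqality} only as ``straightforward to verify'' with no proof given, there is nothing to compare it against. The anticipated cancellations do occur exactly as you predict: in part~\ref{FACT:ineqality-1} the discriminant $(\alpha-3)^2-4\alpha(\alpha+3)=(48-24\sqrt3)-(48-24\sqrt3)=0$ with double root $t=(\sqrt3-1)/2=1-\gamma$, and in part~\ref{FACT:ineqality-2} one gets $h(t)=\frac{1}{\sqrt3}\left(t-\frac{\sqrt3-1}{2}\right)^2\left(1-\frac{\sqrt3}{4}-t\right)$, whose linear factor is nonnegative on $[0,1/2]$ since $1-\frac{\sqrt3}{4}>\frac12$ (only note that the leading coefficient of $h$ is negative, $-1/\sqrt3$, so it is this factored form, not ``nonnegative leading behaviour'', that closes the argument).
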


\section{Computer-generated results}\label{se:Flag-Algebra}
In this section, we present the results derived by computer using the flag algebra method of Razborov~\cite{Raz07}, which is also described in e.g.~\cite{Razborov10,BaberTalbot11,SFS16,GilboaGlebovHefetzLinialMorgenstein22}.
Since this method is well-known by now, we will be very brief. In particular,  we omit many definitions, referring the reader to~\cite{Raz07,Razborov10} for any missing notions. Roughly speaking, a \textbf{flag algebra proof using $0$-flags on $m$ vertices} of an upper bound $u\in\I R$ on the given objective function $f$ 
consists of an identity 
$$
u-f(\C H)=\mathrm{SOS}+\sum_{F\in\C F_m^0}c_F \cdot p(F,\C H)+o(1),
$$ 
which is asymptotically true for any admissible $\C H$ with $|V(\C H)|\to\infty$. 
Here the $\mathrm{SOS}$-term can be represented as a sum of squares (as described e.g.\ in~\ra{Section~3}), $\C F_m^0$ consists of all up to isomorphism \textbf{$0$-flags} (objects of the theory without designated roots) with $m$ vertices,  each coefficient $c_F\in\I R$ is non-negative, and 
$p(F,\C H)\in [0,1]$ is the density of $k$-subsets of $V(\C H)$ that span a subgraph isomorphic to $F$. If $f(\C H)$ can be represented as a linear combination of the densities of members of $\C F_m^0$ in $\C H$ then finding the smallest possible $u$ amounts to solving a semi-definite program (SDP) with $|\C F_m^0|$ linear constraints. 

We formed the corresponding SDPs and then analysed the solutions returned by computer, using a modified version of the SageMath package. This package is still under development, a short guide on how to install it and an overview of its current functionality can be found in the GitHub repository \href{https://github.com/bodnalev/sage}{\url{https://github.com/bodnalev/sage}}. 
The scripts that we used to generate the certificates and the certificates themselves can be found in the ancillary folder of the arXiv version of this paper or in a separate GitHub repository \href{https://github.com/bodnalev/supplementary_files}{\url{https://github.com/bodnalev/supplementary_files}} in the folder \verb$no_cl$. This folder also includes a self-contained verifier entirely written in Python 3, with a hard-coded version of the objective functions and assumptions found in the propositions of this section. The purpose of this program is to first ensure that the objective functions and assumptions match those in the propositions, and then to verify the correctness of the flag algebra proofs from the certificates. 
We did not optimise the verifier for speed (aiming instead for the clarity of the code); the full verification of all computer-generated results in this paper takes about 80 hours on an average PC.

As far as we can see, none of the certificates  can be made sufficiently compact to be human-checkable. 
Hence we did not make any systematic efforts to reduce the size of the obtained certificates, being content with ones that can be generated and verified on an average modern PC. In particular, we did not try to reduce the set of the used types needed for the proofs, although we did use the (standard) observation of Razborov~\ra{Section 4} that each unknown SDP matrix can be assumed to consist of 2 blocks (namely, the invariant and anti-invariant parts).

\newcommand{\al}[1]{\alpha_{\mathrm{\ref{pr:#1}}}}
\newcommand{\be}[1]{\beta_{\mathrm{\ref{pr:#1}}}}
\newcommand{\ep}[1]{\e_{\mathrm{\ref{pr:#1}}}}

\newcommand{\AlphaThreeTwoRat}{\frac{11798651975724865057843}{28855039009153351680000}}
\newcommand{\AlphaThreeTwoAppr}{0.40889...} 

Our first result establishes a crude upper bound on  the Tur\'{a}n density of $\mathcal{C}'$, namely that $\pi(\mathcal{C'})\le \al{1}$, where
\begin{align*}
    \al{1}
    \coloneqq 
    \begin{cases}
        \frac{7309337}{15728640} \approx 0.46471... & \quad\text{if}\quad \mathcal{C}' = \{C_{4}^{3}, C_{5}^{3}\}, \\[0.5em]
        \frac{60891}{131072} \approx 0.46456... & \quad\text{if}\quad \mathcal{C}' = \{C_{4}^{3}, F_1, F_2\},
    \end{cases}
\end{align*}
which is not far from the conjectured value $2\sqrt{3}-3 \approx 0.46410$.

\begin{proposition}\label{PROP:crude-upper-bound-pi-C5}\label{pr:1}
    For every integer $n \ge 1$, every $\mathcal{C}'$-free $n$-vertex $3$-graph $\mathcal{H}$ has at most $\al{1} \frac{n^3}{6}$ edges. 
\end{proposition}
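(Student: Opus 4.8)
The plan is to reduce Proposition~\ref{pr:1} to a single, finite semidefinite feasibility computation in Razborov's flag algebra framework for $3$-graphs, and then to let the computer produce and the verifier check the resulting certificate. Concretely, I would set up the SDP whose objective is to minimise $u$ subject to
\[
u - \frac{|\mathcal{H}|}{\binom{n}{3}} \;=\; \mathrm{SOS} + \sum_{F\in\mathcal{F}_m^0} c_F\, p(F,\mathcal{H}) + o(1)
\]
holding asymptotically over all $\mathcal{C}'$-free $3$-graphs $\mathcal{H}$, where $m$ is a fixed small number of vertices (I would try $m=6$ first, and increase it if the bound is not yet strong enough). The admissibility constraint ``$\mathcal{C}'$-free'' is encoded simply by deleting from the list of $0$-flags on $m$ vertices every flag that already contains a member of $\mathcal{C}'$ — since $C_4^3$ has $4$ vertices and $C_5^3$, $F_1$ have $5$, and $F_2$ has $6$, all of these are visible at the level $m=6$ (and $C_4^3$, $C_5^3$, $F_1$ already at $m=5$). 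The edge density $|\mathcal{H}|/\binom{n}{3}$ is itself a linear combination of the densities $p(F,\mathcal{H})$ over $F\in\mathcal{F}_m^0$ (namely the average of the edge counts of the $F$'s, suitably normalised), so the whole problem is genuinely an SDP with $|\mathcal{F}_m^0|$ linear constraints, one per $0$-flag.

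The next step is purely computational: form this SDP using the SageMath package referenced in Section~\ref{se:Flag-Algebra}, solve it numerically, and then round the floating-point solution to exact rationals in such a way that the SOS decomposition and the nonnegativity of the multipliers $c_F$ are preserved exactly. The exact optimum value that comes out is the rational number $\al{1}$ quoted in the statement (two slightly different values according to which of the two families $\mathcal{C}'$ is being used, since the sets of forbidden flags differ). Once an exact certificate — the PSD matrices for the SOS part, written as rational Cholesky-type data, together with the rational coefficients $c_F$ — is in hand, correctness is a finite identity among rational polynomials in the flag densities, which is exactly what the Python verifier described in the paper checks (after first confirming that the hard-coded objective and the emptiness of the $\mathcal{C}'$-containing flags match the statement of the proposition). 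Since $\mathrm{ex}(n,\mathcal{C}')/\binom{n}{3}$ is nonincreasing in $n$ by the Katona--Nemetz--Simonovits averaging argument, the asymptotic bound $\pi(\mathcal{C}')\le\al{1}$ upgrades automatically to the claimed bound $|\mathcal{H}|\le \al{1}\binom{n}{3}\le \al{1} n^3/6$ valid for every finite $n\ge 1$, not merely in the limit.

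The main obstacle is not mathematical depth but the practical one of getting a numerically well-conditioned SDP solution that rounds cleanly to an exact rational certificate of manageable size: flag algebra SDPs at $m=6$ for $3$-graphs are sizable, the optimal SOS matrices are typically only positive semidefinite (not strictly positive definite), and naive rounding destroys feasibility. Standard remedies — exploiting the block decomposition into invariant and anti-invariant parts (as noted in the excerpt, following Razborov), perturbing the objective slightly away from the true optimum so the rounded bound $\al{1}$ is a hair above the SDP value and hence safely certifiable, and only then clearing denominators — are what I would rely on. A secondary point to be careful about is the bookkeeping that translates ``$\mathcal{C}'$-free'' into the flag-algebra setting: one must make sure that forbidding the listed small $3$-graphs as subgraphs (not just as induced subgraphs) is implemented correctly, which here simply means zeroing the densities $p(F,\mathcal{H})$ for every $m$-vertex flag $F$ that contains any member of $\mathcal{C}'$, and this is exactly the ``assumptions'' block the verifier is designed to re-check against the proposition.
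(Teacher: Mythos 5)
Your computational core — the $m=6$ flag-algebra SDP over $\mathcal{C}'$-free $0$-flags, exact rational rounding, and verification by the Python checker — is exactly what the paper does, and that part is fine. The gap is in your final step, where you claim the asymptotic bound $\pi(\mathcal{C}')\le\alpha_{\ref{pr:1}}$ ``upgrades automatically'' to every finite $n$ via Katona--Nemetz--Simonovits monotonicity. The monotonicity goes the wrong way for this: since $\mathrm{ex}(n,\mathcal{C}')/\binom{n}{3}$ is nonincreasing, the finite densities approach $\pi(\mathcal{C}')$ \emph{from above}, so $\pi(\mathcal{C}')\le\alpha_{\ref{pr:1}}$ gives no upper bound at all on $\mathrm{ex}(n,\mathcal{C}')$ for a fixed $n$. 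Indeed your intermediate claim $|\mathcal{H}|\le\alpha_{\ref{pr:1}}\binom{n}{3}$ is false: for $n=4$, since only $C_4^3=K_4^3$ is relevant, there is a $\mathcal{C}'$-free $3$-graph with $3$ edges, while $\alpha_{\ref{pr:1}}\binom{4}{3}\approx 1.86$. The true statement survives only because it is normalised by $n^3/6$ rather than $\binom{n}{3}$, and that normalisation is precisely what the missing argument must exploit.

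The correct bridge from the asymptotic SDP bound to every finite $n$ is the blowup argument the paper uses: if some $\mathcal{C}'$-free $\mathcal{H}$ had $\beta:=6|\mathcal{H}|/n^3>\alpha_{\ref{pr:1}}$, then each $k$-blowup $\mathcal{H}^{(k)}$ has the same ratio $\beta$ (the quantity $6|\cdot|/v(\cdot)^3$ is invariant under balanced blowups), and $\mathcal{H}^{(k)}$ remains $\mathcal{C}'$-free because $\mathcal{C}'$ is closed under homomorphic images; letting $k\to\infty$ produces a $\mathcal{C}'$-free limit homomorphism $\phi$ with $\phi(K_3^3)=\beta>\alpha_{\ref{pr:1}}$, contradicting the flag-algebra certificate with no $o(1)$ loss. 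You should replace your last paragraph's KNS step with this blowup/limit argument (or, alternatively, track the finite-$n$ error terms in the SOS identity explicitly, which is messier); as written, the proof does not establish the proposition for finite $n$.
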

\begin{proof}[Proof of Proposition~\ref{PROP:crude-upper-bound-pi-C5}]
    Suppose on the contrary that some $3$-graph $\C H$  contradicts the proposition. Thus, $\beta \coloneqq 6|\C H|/(v(\C H))^3$ is greater than $\al1$. For every integer $k\ge 1$, the $k$-blowup $\C H^{(k)}$ has $k\, v(\C H)$ vertices and $k^3\,|\C H|$ edges, so the ratio $6|\C H^{(k)}|/(v(\C H^{(k)}))^3$ for $\C H^{(k)}$ is also equal to~$\beta$. Also, $\C H^{(k)}$ is still $\mathcal{C}'$-free since this family can be easily checked to be closed under taking images under homomorphisms.

    The sequence $(\C H^{(k)})_{k=1}^\infty$ converges as $k\to\infty$ to a limit $\phi$, where 
     $$
     \phi(F):=\lim_{k\to\infty} p(F,\C H^{(k)}),\quad\mbox{for a 3-graph $F$,}
     $$
     that is, $\phi$
    sends a 3-graph $F$ to the limiting density of $F$ in $\C H^{(k)}$. We extend $\phi$ by linearity to formal linear combinations of $3$-graphs, obtaining a positive  homomorphism $\C A^0\to \I R$ from the flag algebra $\C A^0$ to the reals, see~\cite[Section~3.1]{Raz07}. It satisfies that $\phi(K_3^3)=\beta$, that is, the edge density in the limit $\phi$ is $\beta$. 
    
    However, our (standard) application of the flag algebra method using $\mathcal{C}'$-free $3$-graphs on $6$ vertices  gives $\al1$ as an upper bound on the edge density. This contradicts our assumption $\beta>\al1$. 
\end{proof}

For an $n$-vertex $3$-graph $\C H$ define the \textbf{max-cut ratio} to be
\begin{align*}
    \mu(\mathcal{H})
    \coloneqq \frac{6}{n^3} \cdot \max\left\{|\mathcal{H} \cap \mathbb{B}[V_1, V_2]| \colon \text{$V_1, V_2$ form a partition of $V(\mathcal{H})$}\right\}. 
\end{align*}

The following result shows that an arbitrary $\CC$-free $3$-graph $\C H$ of fairly large size contains a large semi-bipartite subgraph, namely with at least  $\al2 n^3/6$ edges, where 
\begin{equation}\label{eq:al2}
    \al2 
    \coloneqq 
    \begin{cases}
        \frac{1194242541}{2923734800} \approx 0.40846... & \quad\text{if}\quad \mathcal{C}' = \{C_{4}^{3}, C_{5}^{3}\}, \\[0.5em]
        \frac{2013187661}{4871280000} \approx 0.41327... & \quad\text{if}\quad \mathcal{C}' = \{C_{4}^{3}, F_1, F_2\}.
    \end{cases}
\end{equation}

\begin{proposition}\label{pr:Flag-3-parts}\label{pr:2} 
    Suppose that $\mathcal{H}$ is an $n$-vertex $\CC$-free $3$-graph with at least $\be2\,\frac{n^3}{6}$ edges, where 
    \begin{align*}
        \be2 
        \coloneqq \frac{4641}{10000} \le 2\sqrt{3} - 3 - 10^{-6}. 
    \end{align*}
    Then $\mathcal{H}$ has the max-cut ratio $\mu(\C H)$ at least $\al2$.
\end{proposition}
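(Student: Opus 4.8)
The plan is to argue by contradiction, in the spirit of the proof of Proposition~\ref{pr:1}, but using the flag algebra of $2$-\emph{coloured} $3$-graphs, where a $2$-colouring of the vertices plays the role of the partition $V_1\cup V_2$; let $e$ and $s$ be, respectively, the edge density and the \emph{semi-bipartite density} (the density of edges with exactly two vertices in the first colour class), regarded as linear functionals on this flag algebra. Suppose that $\C H$ is an $n$-vertex $\CC$-free $3$-graph with $|\C H|\ge\be2\,n^3/6$ but $\mu(\C H)<\al2$, where $\al2$ is the constant from~\eqref{eq:al2} corresponding to the given family $\CC$. I would first record that $\mu$ is \emph{exactly} blowup-invariant: a partition of $V(\C H^{(k)})$ placing $a_v$ vertices of the $v$-th part into the first class creates exactly $k^3\sum_{xyz\in\C H}f_{xyz}$ edges with exactly two vertices in the first class, where $t_v:=a_v/k$ and $f_{xyz}:=t_xt_y(1-t_z)+t_x(1-t_y)t_z+(1-t_x)t_yt_z$; as $\sum_{xyz}f_{xyz}$ is affine in each $t_v$ separately, its maximum over $[0,1]^{V(\C H)}$ is attained at a $0/1$-vector, i.e.\ at a genuine partition of $\C H$. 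Hence $\mu(\C H^{(k)})=\mu(\C H)<\al2$ for every $k\ge1$, while $6|\C H^{(k)}|/(kn)^3=6|\C H|/n^3=:\beta\ge\be2$ and every $\C H^{(k)}$ is still $\CC$-free.

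For each $k$, fix a $2$-colouring $c_k$ of $\C H^{(k)}$ attaining $\mu(\C H^{(k)})$. Since $c_k$ is optimal, it is \emph{locally maximal}: for every vertex $v$ in the first class, the number of edges through $v$ with both other vertices in the first class is at most the number of edges through $v$ with exactly one other vertex there, and the reverse inequality holds for every vertex $v$ in the second class (otherwise moving $v$ to the other class would strictly increase $s$). Thus each $(\C H^{(k)},c_k)$ is a $2$-coloured $\CC$-free $3$-graph on $kn$ vertices that is locally maximal, has $e=\beta\ge\be2$, and has $s=\mu(\C H)$.

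The contradiction will come from a computer-generated flag algebra certificate in the $2$-coloured theory, using $0$-flags on a bounded number of vertices, in which the flag-algebra encodings of the two local-maximality inequalities above (one per $1$-type: a root in the first, respectively the second, colour class) are added as additional linear constraints of the SDP, together with the constraint $e\ge\be2$. Such a certificate establishes that $s(\C G)\ge\al2-o(1)$ for every $2$-coloured $\CC$-free $3$-graph $\C G$ with $v(\C G)\to\infty$ that is locally maximal and satisfies $e(\C G)\ge\be2$. Applying it to $\C G=(\C H^{(k)},c_k)$ gives $\mu(\C H)=s(\C H^{(k)},c_k)\ge\al2-o_{k\to\infty}(1)$, hence $\mu(\C H)\ge\al2$, contradicting $\mu(\C H)<\al2$. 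The same scheme works for both choices of $\CC$ in~\eqref{equ:def-C'}, with the respective value of $\al2$.

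The genuinely new ingredient compared with a routine flag algebra computation is this \emph{local refinement} step: since an arbitrary $2$-colouring can be greedily improved to a locally maximal one without decreasing $s$, we may assume local maximality, and feeding the resulting first-order conditions into the SDP is what makes a positive lower bound on $s$ provable at all --- without them the SDP cannot distinguish a useful colouring from the all-one-colour one (for which $s=0$). Accordingly, the main obstacle is computational: finding the positive semidefinite matrices and the non-negative multipliers for the local-maximality and $e\ge\be2$ constraints that yield the certificate; this is done by computer and, like the other certificates in this section, does not appear to admit a human-checkable form. The remaining points --- exact blowup-invariance of $\mu$ via multilinearity, and the correctness of the flag-algebra encoding of local maximality --- are routine.
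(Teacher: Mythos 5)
There is a genuine gap at the heart of your argument: the flag-algebra certificate you invoke cannot exist. Your SDP is only fed the two local-maximality inequalities and the constraint $e\ge\beta_{\ref{pr:2}}$, so any certificate it produces must hold for \emph{every} limit of locally maximal $2$-coloured $\CC$-free $3$-graphs with edge density at least $\beta_{\ref{pr:2}}$. But the trivial colouring with $V_1=\emptyset$ (all vertices in the second class) is locally maximal: for $v\in V_2$ both sides of~\eqref{equ:local-max-link-V2} are zero, and the condition for $V_1$-vertices is vacuous, so no single vertex move can increase the (zero) number of transversal edges. This colouring satisfies all your constraints while having semi-bipartite density $s=0<\alpha_{\ref{pr:2}}$, so no valid sum-of-squares identity can conclude $s\ge\alpha_{\ref{pr:2}}-o(1)$ from them. (Contrary to your closing remark, local maximality does \emph{not} exclude the all-one-colour configuration; only the ``all vertices in $V_1$'' colouring is excluded.) The global optimality of your colouring $c_k$ is what you would actually need, but you never encode it, and it cannot be encoded as a finite set of flag inequalities in this way; this is precisely why the local-refinement trick is used in the paper only for Proposition~\ref{pr:3}, where the extra hypotheses (near-regularity and, crucially, $|\mathcal H\cap\mathbb{B}[V_1,V_2]|\ge\frac{2}{5}\cdot\frac{n^3}{6}$, which is supplied \emph{by} Proposition~\ref{pr:2}) rule out such degenerate colourings. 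Your preliminary observation that $\mu(\C H^{(k)})=\mu(\C H)$ via multilinearity is correct and matches the paper, but it does not rescue the main step.

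For comparison, the paper proves Proposition~\ref{pr:2} without colourings at all: it exhibits an explicit cut and lower-bounds its value. Given an edge $X\in\C H$, it puts into $V_1^X$ the vertices whose link contains exactly two pairs of $X$ and into $V_2^X$ the remaining vertices outside $X$, chooses $X$ so that $|\C H\cap\mathbb{B}[V_1^X,V_2^X]|$ is at least its average over a uniform random edge, and expresses this average (in the blowup limit $\phi$) as $\phi(\eval{Y^E})/\phi(K_3^3)$ with $E$ the edge type in the \emph{uncoloured} $\CC$-free theory. A flag-algebra computation on $6$-vertex flags under $\phi(K_3^3)\ge\beta_{\ref{pr:2}}$ gives $\phi(\eval{Y^E})\ge\alpha_{\ref{pr:2}}\alpha_{\ref{pr:1}}$, and combining this with the upper bound $\phi(K_3^3)\le\alpha_{\ref{pr:1}}$ from Proposition~\ref{pr:1} yields the claimed max-cut ratio. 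If you want to salvage your route, you would have to inject into the SDP some certified lower bound on $s$ coming from an explicitly constructible partition --- which is, in essence, what the paper's edge-link construction accomplishes.
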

\begin{proof}[Proof of Proposition~\ref{pr:Flag-3-parts}]
Informally speaking, the main idea is that any edge $X$ of  $\C H$ defines two disjoint subsets  $V_1^{X}, V_2^{X}\subseteq V(\C H)\setminus X$, where 
\begin{enumerate}[label=(\roman*)]
    \item\label{it:V1-X} $V_1^{X}$ consists of vertices in $V(\C H)\setminus X$ that have exactly two links in $X$, that is, we put a vertex $v$ into $V_1$ if there are exactly two pairs in $X$ that form an edge with $v$;  
    \item\label{it:V2-X} $V_2^{X}$ consists of vertices in $V(\C H)\setminus X$ that have either zero or exactly one link in $X$. 
\end{enumerate}
%
Of course, $(6/n^3)\,|\mathcal{H} \cap \mathbb{B}[V_1, V_2]|$ is a lower bound on the max-cut ratio of $\mathcal{H}$.
We choose an edge $X\in\C H$ such that the size of $\C G^X \coloneqq \C H\cap \mathbb{B}[V_1^X,V_2^X]$ is at least the average value when $X$ is chosen uniformly at random from~$\C H$. 

We can express the product $P:=p(K_3^3,\C H)\cdot \I E|\C G^X|/{n-3\choose 3}$ via densities of $6$-vertex subgraphs, where $p(K_3^3,\C H)=|\C H|/\binom{v(\C H)}{3}$ is the \textbf{edge density} of~$\C H$.
Indeed, $P$ is the probability, over random disjoint $3$-subsets $X,Y\subseteq V(\C H)$, of $X\in\C H$ and $Y\in\C G^X$; this in turn can be determined by first sampling a random 6-subset $X\cup Y\subseteq V(\C H)$ and computing its contribution to $P$ (which depends only on the isomorphism class of $\C H[X\cup Y]$). 
We bound $P$ from below,  via rather standard flag algebra calculations.

Let us briefly give some formal details. We work with the limit theory of $\CC$-free $3$-graphs. For a $k$-vertex \textbf{type} (i.e.,\ a fully labelled $3$-graph) $\tau$ and an integer $m\ge k$, let $\C F_m^\tau$ be the set of all \textbf{$\tau$-flags} on $m$ vertices (i.e.,\ $3$-graphs with $k$ labelled vertices that span a copy of $\tau$) up to label-preserving isomorphisms. Let $\I R\C F_m^\tau$ consist of  formal linear combinations $\sum_{F\in \C F_m^\tau} c_F F$ of $\tau$-flags with real coefficients (which we will call \textbf{quantum $\tau$-flags}).

For $i\in \{0,1,2\}$, the unique type with $i$ vertices (and no edges) is denoted by $i$. Let $E$ denote the type which consists of three roots spanning an edge.

We will use the following definitions, depending on an $E$-flag $(H, (x_1,x_2,x_3))$. (Thus, $\{x_1,x_2,x_3\}$ is an edge of a $\CC$-free $3$-graph $H$.) For $i\in[3]$, we define $V_1=V_1(H, (x_1,x_2,x_3))$ to consist of those vertices $y\in V(H)\setminus X$ such that the $H$-link of $y$ contains exactly two pairs from $X$, and $V_2=V_2(H, (x_1,x_2,x_3))$ to consist of those vertices $z\in V(H)\setminus X$ such that the $H$-link of $z$ contains either zero or exactly one pair from $X$, where we denote $X\coloneqq \{x_1,x_2,x_3\}$. 
By definition, the sets $V_1$ and $V_2$ are pairwise disjoint. 
Let 
\begin{align*}
    \mathcal{B} 
    = \mathcal{B}(H, (x_1,x_2,x_3)) 
    \coloneqq H\cap \mathbb{B}[V_1,V_2]
\end{align*}
be the $3$-graph consisting of those edges in $H$ that contains two vertices from $V_1$ and one vertex from $V_2$.

When we normalise $|\mathcal{B}|$ by ${v(H)-3\choose 3}^{-1}$, the obtained ratio can be viewed as the probability over a uniformly random $3$-subset $Y$ of $V(H)\setminus\{x_1,x_2,x_3\}$  that $Y$ is an edge of $H$, the link of two vertices of $Y$ have exactly two pairs inside $X$ and the link of the remaining vertex of $Y$ has either zero or exactly one pair inside $X$. 
This ratio is the \textbf{density}  (where the roots have to be preserved) of the quantum $E$-flag
\begin{equation}\label{eq:F222E}
	Y^E\coloneqq \sum_{F\in \C F_6^E} |\mathcal{B}(F)|\,  F
\end{equation}
in the $E$-flag $(H,(x_1,x_2,x_3))$.
Note that each coefficient $|\mathcal{B}(F)|$ in~\eqref{eq:F222E} is either 0 or 1 since there is only one potential set $Y$ to test inside every $F\in\C F_6^E$ (while the scaling factor ${6-3\choose 3}^{-1}=1$ is omitted from~\eqref{eq:F222E}).

For a $k$-vertex type $\tau$ and $(F,(x_1,\dots,x_k))\in\C F_m^\tau$,  the \textbf{averaging} $\eval{F}$ is defined as the quantum (unlabelled) $0$-flag $q\, F\in\I R\C F_m^0$, where $q$ is the probability for a uniform random injection $f:[k]\to V(F)$ that $(F,(f(1),\dots,f(k)))$ is isomorphic to $(F,(x_1,\dots,x_k))$, and this definition is extended to $\I R\C F_m^\tau$ by linearity, see e.g.\ \cite[Section 2.2]{Raz07}.

Let us return to the proposition. Suppose that some $n$-vertex $3$-graph $\C H$ contradicts it. Let $\beta:=\mu(\C H)$ be the max-cut ratio for $\C H$.
By our assumption, $\beta<\al2$.

Let $\phi:\C A^0\to \I R$ be the limit of the uniform blowups $\C H^{(k)}$ of $\C H$. The assumption on the edge density of $\C H$ gives that $\phi(K_3^3)\ge \be2$. 

Note that $\mu(\C H^{(k)})=\mu(\C H)$, where the non-trivial inequality $\mu(\C H^{(k)})\le \mu(\C H)$ can be proved as follows. Take an optimal partition $V_1\cup V_2$ of $V(\C H^{(k)})$. If some part $U_x$ for $x\in V(\C H)$ is properly split by the partition  then, since the number of $\I B[V_1,V_2]$-edges is a linear function of $|U_x\cap V_1|$, so we can re-assign all vertices of $U_x$ to one part with the new partition still optimal. We iteratively repeat this step for all~$x\in V(\C H)$. The final partition of $V(\C H^{(k)})$ does not split any $U_x$ and thus gives a partition of $V(\C H)$ of the same cut value.

For each $k$-blowup $\C H^{(k)}$ pick an edge $X = \{x_1,x_2,x_3\}\in \C H^{(k)}$ such that the density (when normalised by ${kn-3\choose 3}^{-1}$) of $\mathcal{H}^{(k)} \cap \mathbb{B}[V_{1}^{X}, V_{2}^{X}]$ is at least the average value. This average density tends to $\phi(\eval{Y^E})/\phi(K_3^3)$ as $k\to\infty$. 
Proposition~\ref{PROP:crude-upper-bound-pi-C5} gives that $\phi(K_3^3)\le \al{1}$. Thus, in order to to get a contradiction to  $\mu(\C H^{(k)})=\mu(H)=\beta<\al2$, it is enough to prove that $\phi(\eval{Y^E}) \ge \al2\al1$.

This is done by flag algebra calculations on flags with at most $6$ vertices, under the assumption that $\phi(K_3^3)\ge \be2$.  (In fact, our definition of $\al2$ comes from taking the rounded lower bound on $\phi(\eval{Y^E})$ returned by the computer and dividing it by $\al1$.)
\end{proof}

In order to state the next result, we have to provide various definitions for a $3$-graph~$\C H$. Recall that a partition $V(\C H)=V_1\cup V_2$ of its vertex set is \textbf{locally maximal} if $|\C H\cap \mathbb{B}[V_1, V_2]|$ does not increase when we move one vertex from one part to another. For example, a partition that maximises $|\C H\cap \mathbb{B}[V_1, V_2]|$ is locally maximal; another (more efficient) way to find one is to start with an arbitrary partition and keep moving vertices one by one between parts as long as each move strictly increases the number of transversal edges.
Note that the following statements hold for any locally maximal partition $V(\C H)=V_1\cup V_2$ by definition:
\begin{enumerate}[label=(\roman*)]
    \item For every $v \in V_1$, we have 
    \begin{align}\label{equ:local-max-link-V1}
         \left| L_{\mathcal{H}}(v) \cap K[V_1, V_2] \right|\ge \left| L_{\mathcal{H}}(v) \cap \binom{V_1}{2} \right|. 
    \end{align}
    \item For every $v \in V_2$, we have 
    \begin{align}\label{equ:local-max-link-V2}
        \left| L_{\mathcal{H}}(v) \cap \binom{V_1}{2} \right|\ge 
        \left| L_{\mathcal{H}}(v) \cap K[V_1, V_2] \right|
        . 
    \end{align}
\end{enumerate}

For a partition $V(\C H)=V_1\cup V_2$, let 
\begin{align}
    B_{\mathcal{H}}(V_1, V_2)
         &\coloneqq \left\{ e \in \C H\colon  |e \cap V_1| \in \{1, 3\}\right\}, \label{equ:def-bad-triple} \\[0.3em]
    M_{\mathcal{H}}(V_1, V_2)
         &\coloneqq  \left\{ e \in \overline{\mathcal{H}} \colon  |e \cap V_1| = 2 \right\}.  \label{equ:def-missing-triple}
\end{align}
be the sets of \textbf{bad} and \textbf{missing} edges respectively. We will omit $(V_1, V_2)$ and the subscript $\mathcal{H}$ if it is clear from the context. If we compare $\C H$ with the recursive construction with the top parts $V_1,V_2$ then,  with respect to the top level, $B$ consists of the additional edges of $\C H$ while $M=\mathbb{B}[V_1,V_2] \setminus \C H$ consists of the top triples not presented in~$\C H$. Note that the edges inside $V_2$ are not classified as bad or missing. 

The key result needed for our proof is the following.

%
\begin{proposition}\label{pr:3}
    For every $\xi > 0$ there exist $\delta_{\ref{pr:3}} = \delta_{\ref{pr:3}}(\xi)$ and $N_{\ref{pr:3}} = N_{\ref{pr:3}}(\xi)$ such that the following holds for all $n \ge N_{\ref{pr:3}}$. 
    Let $\mathcal{H}$ be an $n$-vertex $\CC$-free $3$-graph and $V_1 \cup V_2 = V(\mathcal{H})$ be a locally maximal partition. 
    Suppose that 
        \begin{itemize}
        \item\label{pr:3-b} $\Delta(\mathcal{H}) - \delta(\mathcal{H}) \le \delta_{\ref{pr:3}} n^{2}$, and 
        \item\label{pr:3-c} $|\mathcal{H} \cap \mathbb{B}[V_1, V_2]|\ge \be3 \frac{n^3}{6}$,  
        \end{itemize}
     where $\be3\coloneqq 2/5$. Let  $B=B_{\C H}(V_1,V_2)$ and $M=M_{\C H}(V_1,V_2)$ be as defined in~\eqref{equ:def-bad-triple} and~\eqref{equ:def-missing-triple} respectively. Then it holds that
        \begin{align}
        \label{eq:BM}
            |B|-\frac{3999}{4000}\,|M|
            \le \xi n^3.
        \end{align}
\end{proposition}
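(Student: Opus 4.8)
The plan is to argue by contradiction and compactness, reducing Proposition~\ref{pr:3} to an inequality about a limit object in an \emph{augmented} flag algebra which is then verified by computer. Fix $\xi>0$ and suppose the statement fails for all choices of $\delta_{\ref{pr:3}},N_{\ref{pr:3}}$. Then for each $k$ there is a $\CC$-free $3$-graph $\mathcal{H}_k$ on $n_k\ge k$ vertices with a locally maximal partition $V_1^k\cup V_2^k=V(\mathcal{H}_k)$ satisfying $\Delta(\mathcal{H}_k)-\delta(\mathcal{H}_k)\le n_k^2/k$ and $|\mathcal{H}_k\cap\mathbb{B}[V_1^k,V_2^k]|\ge\frac25\cdot\frac{n_k^3}{6}$, yet $|B_k|-\frac{3999}{4000}|M_k|>\xi n_k^3$, where $B_k,M_k$ denote the bad and missing edges of this partition. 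The first task is to record the hypotheses so that they survive taking a limit: the near-regularity bound forces the normalised degree sequence to concentrate; local maximality means $|L_{\mathcal{H}_k}(v)\cap K[V_1^k,V_2^k]|\ge|L_{\mathcal{H}_k}(v)\cap\binom{V_1^k}{2}|$ for every $v\in V_1^k$ and the reverse inequality for every $v\in V_2^k$; and the last two bounds are about the densities of the ``good'' triples (those meeting $V_1^k$ in exactly two vertices) and of $|B_k|-\frac{3999}{4000}|M_k|$.

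Next I would pass to a subsequence along which the $\mathcal{H}_k$, with their $2$-colourings recorded, converge to a limit homomorphism $\phi\colon\C A^0\to\I R$ for the theory of $\CC$-free $3$-graphs whose vertex set additionally carries an (otherwise unconstrained) partition into colour classes $V_1,V_2$. Each finite structure of this theory still has $\CC$-free $3$-graph part, so Proposition~\ref{PROP:crude-upper-bound-pi-C5} and the usual removal-lemma reductions remain available. The hypotheses translate into constraints on $\phi$: (i) $\phi$ is degree-regular, i.e.\ the limit degree density has zero variance, a single quadratic equality of the shape $\phi\eval{(\rho-\phi(K_3^3))^2}=0$ for the appropriate $1$-flag $\rho$; (ii) for the $1$-flag $\sigma_1$ whose root lies in $V_1$, the $\sigma_1$-rooted quantum flag ``density of link pairs across the partition minus density of link pairs inside $V_1$'' is $\phi^{\sigma_1}$-nonnegative, and symmetrically with reversed sign for the $1$-flag $\sigma_2$ rooted in $V_2$; (iii) $\phi(g)\ge 2/5$, where $g$ is the density of edges with colour pattern $(1,1,2)$. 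Writing $b$ for the density of edges with colour pattern $(1,1,1)$ or $(1,2,2)$ and $m$ for the density of non-edges with colour pattern $(1,1,2)$, the normalised value of $|B_k|-\frac{3999}{4000}|M_k|$ tends to $\phi(b)-\frac{3999}{4000}\phi(m)$, which by assumption is at least $6\xi>0$; so it suffices to prove that under (i)--(iii) one always has $\phi(b)\le\frac{3999}{4000}\phi(m)$.

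This final inequality is what the flag algebra computation is designed to deliver. Running the SDP over $\CC$-free $2$-coloured $3$-graphs on at most $6$ vertices, with constraint (iii), the rooted local-maximality inequalities (ii), the regularity equality (i), and the crude edge-density bound of Proposition~\ref{PROP:crude-upper-bound-pi-C5} all fed into the program, the computer returns a sum-of-squares term together with nonnegative multipliers for the added constraints and a nonnegative multiple of $\phi(g)-2/5$, which jointly certify $\frac{3999}{4000}\phi(m)-\phi(b)\ge0$; the certificate is rational and machine-checkable exactly as in Section~\ref{se:Flag-Algebra}. The proof is then finished by the standard compactness transfer: since the target inequality holds for every admissible limit, for each $\xi>0$ there exist $\delta_{\ref{pr:3}},N_{\ref{pr:3}}$ such that any finite $\CC$-free $3$-graph meeting the hypotheses with $n\ge N_{\ref{pr:3}}$ satisfies $|B|-\frac{3999}{4000}|M|\le\xi n^3$.

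The hard part will be the flag algebra \emph{formulation} rather than any particular calculation: one must set up the two-coloured theory correctly and, crucially, add the ``local refinement'' inequalities — the flag-algebra avatars of local maximality — to the SDP, since without them the method returns a bound far too weak. A secondary difficulty is computational, namely keeping the $6$-vertex coloured program with its extra linear and quadratic constraints solvable and making the numerical optimum round to an exact rational certificate. Finally, the specific numbers ($2/5$ in the hypothesis, the slack factor $1-\frac1{4000}$ in the conclusion) are not rigid: they only need to be strong enough for the recursion carried out in Section~\ref{SEC:Proof-turan-density-K4C5} to close to $\pi(\CC)\le\alpha$, which leaves plenty of room to absorb rounding.
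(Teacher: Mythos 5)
Your proposal is correct in outline and follows essentially the same route as the paper: assume a counterexample sequence, pass (after recording the $2$-colouring) to a limit homomorphism in the theory of two-coloured $\CC$-free $3$-graphs, translate near-regularity, the rooted local-maximality inequalities at the two $1$-vertex types, and the cut-density bound $\phi(E_{112})\ge 2/5$ into flag-algebra constraints, and let a computer-generated SOS certificate give $\phi(b)-\tfrac{3999}{4000}\phi(m)\le 0$, contradicting the assumed lower bound $6\xi>0$. The only deviations are minor — you encode regularity as a single zero-variance identity rather than the paper's equalities $\phi(E_0^{\tau}-E_1^{\tau})=0$ over all $2$-vertex types — with one caveat worth noting: the paper's SDP additionally imposes the overall edge-density lower bound $\phi(K_3^3)\ge \tfrac{4641}{10000}$ (listed as assumption (i) in its proof, though not among the proposition's stated hypotheses, and available in the application), so a certificate search using only your constraint set is not guaranteed to close even though the paper's does.
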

\begin{proof}[Proof of Proposition~\ref{pr:3}]
    Suppose to the contrary that this proposition fails for some constant $\xi > 0$. 
    Then there exists a sequence $\left(\mathcal{H}_{n}\right)_{n=1}^{\infty}$ of $\CC$-free $3$-graphs of increasing orders such that for every $n \ge 1$: 
    \begin{enumerate}[label=(\roman*)]
        \item\label{proof-pr:3-a} $|\mathcal{H}_n| \ge \be2 \frac{v^3(\mathcal{H}_{n})}{6}$,
        \item\label{proof-pr:3-b} $\frac{\Delta(\mathcal{H}_n) - \delta(\mathcal{H}_n)}{v^{3}(\mathcal{H}_n)} \to 0$ as $n \to \infty$, and
        \item\label{proof-pr:3-c} there exists a locally maximal bipartition $V_1^{n} \cup V_2^{n} = V(\mathcal{H}_{n})$ with $|\mathcal{H}_{n} \cap \mathbb{B}[V_1^{n}, V_2^{n}]|\ge \be3 \frac{v^3(\mathcal{H}_n)}{6}$, 
    \end{enumerate}
    but 
    \begin{align}\label{equ:B-vs-M-flag-algebra-assump}
        |B_n|-\frac{3999}{4000} |M_n|
            > \xi \cdot v^{3}(\mathcal{H}_{n}).
    \end{align}
    where $B_n \coloneqq B_{\mathcal{H}_n}(V_1^{n}, V_{2}^{n})$ and $M_n \coloneqq M_{\mathcal{H}_n}(V_1^{n}, V_{2}^{n})$. 

    By passing to a subsequence, assume that $\left(\mathcal{H}_{n}\right)_{n=1}^{\infty}$ converges to a flag algebra homomorphism $\phi:\C A^0\to \I R$.
    We would like to run flag algebra calculations on the limit $\phi$ in the theory of $\CC$-free \textbf{$2$-vertex-coloured} $3$-graphs (that is, we have two unary relations $V_1, V_2$ with the only restriction that each vertex in a flag satisfies exactly one of them, that is, these unitary relations encode a 2-partition of the vertex set). 

    For $i\in [2]$, let $(1,i)$ denote the $1$-vertex type where the colour of the unique vertex is~$i$.
    Consider the random homomorphism $\boldsymbol{\phi}^{(1,i)}$, which is the limit of taking a uniform random colour-$i$ root. Note that, by~\ref{proof-pr:3-c}, each part $V_i^{n}\subseteq V(\C H_{n})$ is non-empty (in fact, it occupies a positive fraction of vertices), so $\boldsymbol{\phi}^{(1,i)}$ is well-defined.

    The local maximality assumptions~\eqref{equ:local-max-link-V1} and~\eqref{equ:local-max-link-V2} directly translate in the limit to the statement that, 
    \begin{align}
        \boldsymbol{\phi}^{(1,1)}(K_{1,2}^{(1,1)})
        & \ge \boldsymbol{\phi}^{(1,1)}(K_{1,1}^{(1,1)}) \quad\text{and} \label{eq:LM-a}, \\[0.3em]
        \boldsymbol{\phi}^{(1,2)}(K_{1,1}^{(1,2)})
        & \ge \boldsymbol{\phi}^{(1,2)}(K_{1,2}^{(1,2)}). \label{eq:LM-b}
    \end{align}
    with probability 1, where $K_{a,b}^{(1,i)}$ is the $(1,i)$-flag on three vertices that span an edge with the free vertices having colours $a$ and $b$ (and, of course, the root vertex having colour $i$). 
    
    Assumptions~\ref{proof-pr:3-a},~\ref{proof-pr:3-b}, and~\ref{proof-pr:3-c} translate into the following flag algebra inequalities:
    \begin{align}
        & 
        \phi(E^0) \ge \be2, \label{equ:Flag-ineqalities-a} \\[0.3em]
        & \phi(E_{0}^{\tau} - E_{1}^{\tau}) = 0, \quad  \mbox{for every $2$-vertex type $\tau$}, \label{equ:Flag-ineqalities-b} \\[0.3em]
        & \phi(E_{112}) \ge \be3. \label{equ:Flag-ineqalities-c}
    \end{align}
    where the used terms are as follows. By $E^0$ we mean the sum of all $3$-vertex $0$-flags spanning an edge and being vertex-coloured in all possible ways. (Thus,~\eqref{equ:Flag-ineqalities-a} states that the edge density of the underlying uncoloured $3$-graph is at least $\be2$.) There are 4 different $2$-vertex types $\tau$ with roots $\{0,1\}$, depending on the colours of the roots. For each such type and $i\in \{0,1\}$, $E_{i}^{\tau}$ is the sum of all  $4$-vertex $\tau$-flags such that the two free vertices  make an edge with the root vertex $i$ but not with $1-i$, while the unspecified colours (of the two free vertices) and the unspecified possible 3-edges (those containing both roots) are arbitrary.
     (Thus the combined effect of all restrictions in~\eqref{equ:Flag-ineqalities-b} is  that the underlying uncoloured $3$-graph is almost regular.) Finally, $E_{112}$ denotes the $3$-vertex $0$-flag with one $(1,1,2)$-coloured edge, that is, we look at density of edges with exactly 2 vertices in~$V_1$.
    
    We can now run the usual flag algebra calculations where each of the inequalities in \eqref{equ:Flag-ineqalities-a} and \eqref{equ:Flag-ineqalities-c} (resp.\  \eqref{eq:LM-a}, \eqref{eq:LM-b} and \eqref{equ:Flag-ineqalities-b}) can be multiplied by an unknown non-negative combination of $0$-flags (resp.\ flags of the same type and then averaged out to be a quantum 0-flag), with this added to the SDP program. The final inequality should prove that the left-hand side of~\eqref{eq:BM} is non-positive. Note that the ratio $|M|/\binom{n}{3}$ is the density of the $0$-flag consisting of $(1,1,2)$-coloured non-edge. 
    Likewise, $|B|/\binom{n}{3}$ can be written as the density of the $0$-flag consisting of $(1,1,1)$-coloured and $(1,2,2)$-coloured edges.   
    Now we face the standard flag algebra task of finding the maximum of the quantum $0$-flag expressing $|B|-\frac{3999}{4000}\, |M|$ and checking if it is at most $0$. 

This is by far the most demanding part in terms of the size of the needed semi-definite program. Our proof uses $6$-vertex flags (when the number of linear constraints is $|\C F_6^0|=28080$ for $\CC = \{C_{4}^{3}, C_{5}^{3}\}$, and $|\C F_6^0|=16807$ for $\CC = \{C_{4}^{3}, F_1, F_2\}$). Running it on a conventional PC takes around $18$ and $14$ hours, respectively.
    The results returned by the computer for upper bounding $|B|-\frac{3999}{4000}\, |M|$ are indeed $0$ in both cases (when $\CC = \{C_{4}^{3}, C_{5}^{3}\}$ and $\CC = \{C_{4}^{3}, F_1, F_2\}$). 
    However, by the assumption in~\eqref{equ:B-vs-M-flag-algebra-assump}, $\phi$ satisfies $\phi(M) - \frac{3999}{4000}\, \phi(B) \ge 6\xi >0$, a contradiction. 
\end{proof}

\section{Tur\'{a}n density}\label{SEC:Proof-turan-density-K4C5}
In this section we prove Theorem~\ref{THM:turan-density-C5K4'} (which implies  Theorem~\ref{THM:turan-density-C5K4}, as observed in Section~\ref{SEC:Prelim}). Here (and in the subsequent sections), we will be rather loose with non-essential constants, making no attempt to optimise them.

The following lemma will be crucial for the proof.
\begin{lemma}\label{LEMMA:recursion-upper-bound}
    For every $\xi > 0$, there exist $\delta_{\ref{LEMMA:recursion-upper-bound}} = \delta_{\ref{LEMMA:recursion-upper-bound}}(\xi) > 0$ and $N_{\ref{LEMMA:recursion-upper-bound}} = N_{\ref{LEMMA:recursion-upper-bound}}(\xi)$ such that the following holds for all $n \ge N_{\ref{LEMMA:recursion-upper-bound}}$. 
    Suppose that $\mathcal{H}$ is an $n$-vertex $\CC$-free $3$-graph with at least $\left(\frac{\pi(\CC)}{6}  - \delta_{\ref{LEMMA:recursion-upper-bound}} \right)n^3$ edges.
    Then there exists a partition $V_1 \cup V_2 = V(\mathcal{H})$ with $\frac{n}{2} \le |V_1| \le \frac{4n}{5}$ such that 
    \begin{align}\label{equ:LEMMA:recursion-upper-bound}
        |\mathcal{H}|
        & \le \binom{|V_1|}{2} |V_2| + |\mathcal{H}[V_2]| + \xi n^3 - \max\left\{\frac{|B|}{3999},\,\frac{|M|}{4000}\right\},
    \end{align}
    where $B=B_{\C H}(V_1,V_2)$ and $M=M_{\C H}(V_1,V_2)$ are defined in \eqref{equ:def-bad-triple} and
    \eqref{equ:def-missing-triple} respectively.
\end{lemma}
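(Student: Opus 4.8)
The plan is to combine the three flag-algebra propositions (Propositions~\ref{pr:1}, \ref{pr:2}, \ref{pr:3}) with the new combinatorial observation that one may always pass to a locally maximal partition without losing transversal edges. Fix $\xi>0$. I will choose $\delta_{\ref{LEMMA:recursion-upper-bound}}$ small enough and $N_{\ref{LEMMA:recursion-upper-bound}}$ large enough in terms of $\xi$ and the constants $\delta_{\ref{pr:3}}(\xi/C)$, $N_{\ref{pr:3}}(\xi/C)$ for a suitable absolute constant $C$; the precise dependencies will be dictated by the cleaning steps below.

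First I would apply a degree-regularisation step. Starting from $\C H$ with $|\C H|\ge(\pi(\CC)/6-\delta_{\ref{LEMMA:recursion-upper-bound}})n^3$, I would like to reduce to a subgraph that is near-regular so that Proposition~\ref{pr:3}'s hypothesis $\Delta-\delta\le\delta_{\ref{pr:3}}n^2$ holds. The standard way: pass to an induced subgraph on $n'\ge(1-o(1))n$ vertices by iteratively deleting vertices of degree much below average; by Fact~\ref{FACT:near-extremal-small-deg} only $o(n)$ vertices have small degree, and by Proposition~\ref{PROP:blowup-invariant-bounded}\ref{PROP:blowup-invariant-bounded-1} (applicable since $\CC$ is blowup-invariant) the maximum degree is already at most $(\pi(\CC)/2+o(1))n^2$; repeatedly discarding low-degree vertices and re-cleaning gives a subgraph $\C H'$ on $n'$ vertices with $\Delta(\C H')-\delta(\C H')\le\delta_{\ref{pr:3}}(n')^2$ and $|\C H'|\ge|\C H|-o(n^3)$. (One has to be slightly careful that this regularisation does not cost more than $o(n^3)$ edges; this is routine but is the kind of bookkeeping I would spell out.) Then I would choose, on $\C H'$, a partition $V_1'\cup V_2'$ maximising $|\C H'\cap\mathbb{B}[V_1',V_2']|$; such a partition is automatically locally maximal. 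By Proposition~\ref{pr:2}, applied to $\C H'$ (whose edge density exceeds $\be2$ once $\delta_{\ref{LEMMA:recursion-upper-bound}}$ is small), we get $|\C H'\cap\mathbb{B}[V_1',V_2']|\ge\al2 (n')^3/6\ge\be3 (n')^3/6$, so the second hypothesis of Proposition~\ref{pr:3} also holds. Hence Proposition~\ref{pr:3} gives $|B'|-\frac{3999}{4000}|M'|\le(\xi/C)(n')^3$, where $B',M'$ are the bad/missing sets for $(V_1',V_2')$ in $\C H'$.

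Next I would translate this back to $\C H$ and turn it into the stated inequality. Write $\C H'=(\C H'\cap\mathbb{B}[V_1',V_2'])\,\cup\,B'\,\cup\,\C H'[V_2']$, a disjoint union by the definition of bad edges, so $|\C H'|=\binom{|V_1'|}{2}|V_2'|-|M'|+|B'|+|\C H'[V_2']|$. Rearranging and using $|B'|-\frac{3999}{4000}|M'|\le(\xi/C)(n')^3$ gives
\begin{align*}
    |\C H'|&=\binom{|V_1'|}{2}|V_2'|+|\C H'[V_2']|+|B'|-|M'|\\
    &\le\binom{|V_1'|}{2}|V_2'|+|\C H'[V_2']|+\tfrac{\xi}{C}(n')^3-\tfrac{1}{4000}|M'|,
\end{align*}
and symmetrically, since $|B'|-|M'|\le|B'|-\frac{3999}{4000}|B'|\cdot\frac{|M'|}{|M'|}$ is not quite what we want, I would instead note $-|M'|\le-\frac{4000}{3999}(|B'|-\frac{3999}{4000}|M'|)+\frac{1}{3999}\cdot\frac{3999}{3999}\cdot\dots$; cleanly, from $|B'|-\frac{3999}{4000}|M'|\le\frac{\xi}{C}(n')^3$ one gets both $|B'|-|M'|\le\frac{\xi}{C}(n')^3-\frac{|M'|}{4000}$ and $|B'|-|M'|\le\frac{\xi}{C}(n')^3-\frac{|B'|}{3999}$ (the latter because $|B'|-|M'|=\frac{3999}{4000}(|B'|-\frac{3999}{4000}|M'|)\cdot\frac{4000}{3999}\cdot\dots$ — more simply, $\frac{|B'|}{3999}=|B'|-\frac{3998}{3999}|B'|$ and combining linearly), hence $|B'|-|M'|\le\frac{\xi}{C}(n')^3-\max\{|B'|/3999,\ |M'|/4000\}$. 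Finally I extend the partition of $V(\C H')$ to $V(\C H)$ by putting the discarded $o(n)$ vertices arbitrarily, say into $V_2$; this changes every quantity by at most $o(n)\cdot n^2=o(n^3)$, and one checks the part-size constraint $n/2\le|V_1|\le 4n/5$ holds — this follows because the optimal relative size of $V_1$ is forced near $\gamma=\frac{3-\sqrt3}{2}\approx0.634\in(1/2,4/5)$ by Fact~\ref{FACT:ineqality}\ref{FACT:ineqality-2} together with $|\C H'\cap\mathbb{B}[V_1',V_2']|$ being nearly as large as possible (if $|V_1'|$ were outside $[n/2,4n/5]$ the number of transversal edges, bounded by $\binom{|V_1'|}2|V_2'|$, could not reach $\al2(n')^3/6$; some care is needed here and I would verify it by the elementary bound $\binom x2(1-x)\le\max$ computation). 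Absorbing all $o(n^3)$ and $\frac{\xi}{C}(n')^3$ error terms into $\xi n^3$ by taking $C$ large and $N_{\ref{LEMMA:recursion-upper-bound}}$ large finishes the proof.

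The main obstacle I anticipate is the degree-regularisation step: one must arrange near-regularity (to feed Proposition~\ref{pr:3}) while losing only $o(n^3)$ edges, and one must ensure the cut-maximising partition of the cleaned graph still satisfies the hypotheses of Propositions~\ref{pr:2} and~\ref{pr:3} after the cleaning — in particular that the edge-density drop stays below $\delta_{\ref{pr:3}}$-scale so that all the threshold constants ($\be2$, $\be3$) are comfortably met. The quantitative juggling of $\delta_{\ref{LEMMA:recursion-upper-bound}}$ versus $\delta_{\ref{pr:3}}$ and the $o(1)$ terms is the part that needs care; the rest is bookkeeping and the elementary optimisation pinning $|V_1|/n$ near $\gamma$.
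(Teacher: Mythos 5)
Your proposal follows essentially the same route as the paper: remove the $o(n)$ low-degree vertices (the paper does this in one shot via $Z_\delta(\mathcal H)$ from Fact~\ref{FACT:near-extremal-small-deg}, with the max-degree bound from Proposition~\ref{PROP:C5minus-max-degree} giving near-regularity), take a max-cut (hence locally maximal) partition of the cleaned graph, use Proposition~\ref{pr:2} both to verify the hypothesis of Proposition~\ref{pr:3} and to pin $|V_1|$ via the elementary bound $\binom{|U_1|}{2}|U_2|\le \tfrac12 x_1^2(1-x_1)\hat n^3$, then apply Proposition~\ref{pr:3} and do the same linear rearrangement to get both terms of the max, absorbing the deleted vertices' contribution to $B$ and $M$ into the $\xi n^3$ error. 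The only cosmetic caveats are that your appeal to Fact~\ref{FACT:ineqality}~\ref{FACT:ineqality-2} for the part sizes is unnecessary (and would be circular, since it presupposes $x_1\ge 1/2$) — your fallback cut-size computation is the correct and paper's argument — and the garbled identity in the middle resolves exactly as in the paper via $|\hat B|-|\hat M|=\tfrac{4000}{3999}\bigl(|\hat B|-\tfrac{3999}{4000}|\hat M|\bigr)-\tfrac{|\hat B|}{3999}$.
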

\begin{proof}[Proof of Lemma~\ref{LEMMA:recursion-upper-bound}] 
    Fix $\xi > 0$. We may assume that $\xi$ is sufficiently small. 
    Let $\delta_{\ref{pr:3}} = \delta_{\ref{pr:3}}(\xi)$ and $N_{\ref{pr:3}} = N_{\ref{pr:3}}(\xi)$ be the constants given by Proposition~\ref{pr:3}.  Let $\delta > 0$ be a sufficiently small constant, depending on $\xi$, $\delta_{\ref{pr:3}}$ and $N_{\ref{pr:3}}$.
     Let $n$ be sufficiently large such that, in particular, $n \ge 2\,N_{\ref{pr:3}}$. 
    Let $\beta \coloneqq \pi(\CC)$, noting from the $\mathbb{B}_{\mathrm{rec}}$-construction that $\beta \ge \alpha$. 
    Let $\mathcal{H}$ be a $\CC$-free $3$-graph on $n$ vertices with at least $\left(\frac{\beta}{6}  - \delta \right)n^3$ edges. 
    Let 
    \begin{align*}
        V \coloneqq V(\mathcal{H}),\quad 
        Z \coloneqq Z_{\delta}(\mathcal{H}),\quad\text{and}\quad 
        U \coloneqq V(\mathcal{H}) \setminus Z,  
    \end{align*}
    where $Z_{\delta}(\mathcal{H})$ was defined in~\eqref{equ:Z-delta-H-def} for $\mathcal F=\mathcal C'$. 
    
    Let $\mathcal{G}$ denote the induced subgraph of $\mathcal{H}$ on $U$ and let $\hat{n} \coloneqq |U|$. 
    Note from Fact~\ref{FACT:near-extremal-small-deg} that $\hat{n} = n - |Z| \ge (1-\delta^{1/2})n$ and,  
    %
    by the definition of $Z_{\delta}(\mathcal{H})$, we have 
    \begin{align}\label{equ:LEMMA:recursion-upper-bound-G-mindeg}
        \delta(\mathcal{G}) 
        \ge \left(\frac{\beta}{2} - 4\delta^{1/2}\right) n^2 - |Z| n 
        \ge \left(\frac{\beta}{2} - 5\delta^{1/2}\right) n^2
        \ge \left(\frac{\beta}{2} - 5\delta^{1/2}\right) \hat{n}^2. 
    \end{align}
    Combining~\eqref{equ:LEMMA:recursion-upper-bound-G-mindeg} with Proposition~\ref{PROP:C5minus-max-degree}~\ref{PROP:C5minus-max-degree-1}, we see that  
    \begin{align*}
        \Delta(\mathcal{G}) 
        \le \left(\frac{\beta}{2} + 10\left(5 \delta^{1/2}\right)^{1/3}\right) \hat{n}^2.   
    \end{align*}
    It follows that 
    \begin{align}\label{equ:level-one-stability-G-near-regular}
        \Delta(\mathcal{G}) - \delta(\mathcal{G})
        \le \left(\frac{\beta}{2} + 10\left(5 \delta^{1/2}\right)^{1/3}\right) \hat{n}^2 - \left(\frac{\beta}{2} - 5\delta^{1/2}\right) \hat{n}^2 
        \le \delta_{\ref{pr:3}} \hat{n}^2. 
    \end{align}
    Let $V(\mathcal{G}) = U_1\cup U_2$ be a partition such that $|\mathcal{G} \cap \mathbb{B}[U_1, U_2]|$ is maximized.
    Let $V(\mathcal{H}) = V_1 \cup V_2$ be an arbitrary partition such that $U_1 \subseteq V_1$ and $U_2 \subseteq V_2$. 
        Let $x_i \coloneqq |U_i|/\hat{n}$ for $i \in [2]$. 
    Since the partition $U = U_1 \cup U_2$ is maximal (and hence locally maximal), Proposition~\ref{pr:2} (applied to the 3-graph $\mathcal{G}$ on $\hat{n}\ge N_{\ref{pr:3}}$ vertices) implies that 
    \begin{align}\label{equ:LEMMA:recursion-upper-bound-G-cut-ratio}
       |\mathcal{G}[U_1,U_2]| \ge \al2 \frac{\hat{n}^{3}}{6},  
    \end{align}
    where $\al2$ was defined in~\eqref{eq:al2}. 
    By combining~\eqref{equ:LEMMA:recursion-upper-bound-G-cut-ratio} with the trivial upper bound 
    \begin{align*}
        |\mathcal{G}[U_1, U_2]| 
        \le \binom{|U_1|}{2} |U_2| 
        \le \frac{x_1^2(1-x_1)\hat{n}^3}{2}
    \end{align*}
    we obtain through calculations on computer with rational numbers that $\frac{55}{100} \le x_1 \le \frac{78}{100}$, i.e.,\ that $0.55\hat{n} \le |U_1| \le 0.78\hat{n}$.
    Therefore, 
    \begin{align*}
        |V_1|
        & \ge |U_1| 
        \ge 0.55\hat{n} 
        \ge 0.55 \left(1 - \delta^{1/2}\right)n
        \ge \frac{n}{2}
        \quad\text{and} \\
        |V_1|
        & \le |U_1| + |Z|
        \le 0.78 n + \delta^{1/2} n
        \le \frac{4n}{5}. 
    \end{align*}

Recall that $B=B_{\C H}(V_1,V_2)$ and $M=M_{\C H}(V_1,V_2)$. 
    It follows from the definition of $B$ and $M$ that 
    \begin{align*}
        |\mathcal{H}|
        = |\mathcal{H}[V_1, V_2]| + |B| + |\mathcal{H}[V_2]|
        = \binom{|V_1|}{2}|V_2| - |M| + |B| + |\mathcal{H}[V_2]|. 
    \end{align*}
    Therefore, to prove~\eqref{equ:LEMMA:recursion-upper-bound}, it suffices to show that 
    \begin{align}\label{equ:LEMMA:recursion-upper-bound-goal-a}
        |B| - |M| 
        \le \xi n^3 - \max\left\{\frac{|B|}{3999},\,\frac{|M|}{4000}\right\}. 
    \end{align}
    Let $\hat{B} \coloneq B_{\mathcal{G}}[U_1, U_2]$ and $\hat{M} \coloneqq M_{\mathcal{G}}[U_1, U_2]$. 
    Note that $\hat{B} \subseteq B$ and $\hat{M} \subseteq M$, and moreover, 
    \begin{align*}
        |B|
        & \le |\hat{B}| + |Z| \cdot \binom{n}{2}
        \le |\hat{B}| + \frac{\delta^{1/2} n^3}{2} 
        \le |\hat{B}| + \frac{\xi n^3}{10}
        \quad\text{and}\\
        |M|
        & \le |\hat{M}| + |Z| \cdot \binom{n}{2}
        \le |\hat{M}| + \frac{\delta^{1/2} n^3}{2}
        \le |\hat{M}| + \frac{\xi n^3}{10}. 
    \end{align*}
    So~\eqref{equ:LEMMA:recursion-upper-bound-goal-a} is reduced to showing that 
    \begin{align}\label{equ:LEMMA:recursion-upper-bound-goal-b}
        |\hat{B}| - |\hat{M}| 
        \le  \frac{\xi n^3}{2} - \max\left\{\frac{|\hat{B}|}{3999},\,\frac{|\hat{M}|}{4000}\right\}. 
    \end{align}
    Notice that $\mathcal{G}$ satisfies~\eqref{equ:level-one-stability-G-near-regular} and~\eqref{equ:LEMMA:recursion-upper-bound-G-cut-ratio}, and the partition $U_1 \cup U_2 = V(\mathcal{G})$ is locally maximal. So applying Proposition~\ref{pr:3} to $\mathcal{G}$ with $\xi$ there corresponding to $\xi/4$ here, we obtain $|\hat{B}| \le \frac{3999}{4000}\, |\hat{M}| + \frac{\xi \hat{n}^3}{4}$. 
    Consequently,  
    \begin{align*}
        |\hat{B}| - |\hat{M}|
            & = |\hat{B}| - \frac{3999}{4000}\, |\hat{M}| - \frac{|\hat{M}|}{4000}
            \le \frac{\xi \hat{n}^3}{4} - \frac{|\hat{M}|}{4000} \quad\text{and} \\[0.3em]
            |\hat{B}| - |\hat{M}|
            & = \frac{4000}{3999}\left(|\hat{B}| - \frac{3999}{4000}\, |\hat{M}|\right) - \frac{|\hat{B}|}{3999}
            \le \frac{1000\xi \hat{n}^3}{3999} - \frac{|\hat{B}|}{3999},
    \end{align*}
    which implies~\eqref{equ:LEMMA:recursion-upper-bound-goal-b}.
    This completes the proof of Lemma~\ref{LEMMA:recursion-upper-bound}. 
\end{proof}

We are now ready to prove Theorem~\ref{THM:turan-density-C5K4'}.
%
\begin{proof}[Proof of Theorem~\ref{THM:turan-density-C5K4'}]  
    Recall that $\alpha$ and $\gamma$ are defined in~\eqref{equ:gamma-alpha-def}.
    Let $\beta \coloneqq \pi(\CC)$. The $\mathbb{B}_{\mathrm{rec}}$-construction from Section~\ref{SEC:Intorduction} shows that $\beta \ge \alpha$ and our task is to show the converse inequality.
    
   Fix an arbitrarily small $\xi > 0$.
    Let $\delta_{\ref{LEMMA:recursion-upper-bound}} = \delta_{\ref{LEMMA:recursion-upper-bound}}(\xi) > 0$ be the constant given by Lemma~\ref{LEMMA:recursion-upper-bound}. By reducing $\delta_{\ref{LEMMA:recursion-upper-bound}}$ if necessary, we may assume that $\delta_{\ref{LEMMA:recursion-upper-bound}} \le \xi$. 
    Let $n$ be sufficiently large and let $\mathcal{H}$ be an $n$-vertex $\CC$-free $3$-graph with $\mathrm{ex}(n,\CC)$ edges, i.e., the maximum possible size. Notice that we can choose $n$ sufficiently large so that  
    \begin{align}\label{equ:turan-number-concentrate}
       \left|\mathrm{ex}(N, \CC) - \beta \, \frac{N^3}{6} \right| 
        \le \delta_{\ref{LEMMA:recursion-upper-bound}} \frac{N^3}6
        \le \xi \frac{N^3}{6},
        \quad\text{for every}~N \ge \frac{n}{5}. 
    \end{align}
    Since $|\mathcal{H}| > \left(\frac{\beta}{6} - \delta_{\ref{LEMMA:recursion-upper-bound}}\right)n^3$, it follows from Lemma~\ref{LEMMA:recursion-upper-bound} that there exists a partition $V_1 \cup V_2 = V(\mathcal{H})$ with $\frac{n}{2} \le |V_1| \le \frac{4n}{5}$ such that 
    \begin{align}\label{equ:recursion-a}
        |\mathcal{H}|
        & \le \binom{|V_1|}{2} |V_2| + |\mathcal{H}[V_2]| + \xi n^3 - \max\left\{\frac{|B|}{3999},\,\frac{|M|}{4000} \right\}.  
    \end{align}
    where $B=B_{\C H}(V_1,V_2)$ and $M=M_{\C H}(V_1,V_2)$ were defined in \eqref{equ:def-bad-triple} and
    \eqref{equ:def-missing-triple} respectively. 
    
    Let $x_i \coloneqq |V_i|/n$ for $i \in [2]$. 
    It follows from~\eqref{equ:turan-number-concentrate} and~\eqref{equ:recursion-a} that 
    \begin{align*}
        (\beta - \xi) \frac{n^3}{6}
        \le |\mathcal{H}|
        \le \binom{|V_1|}{2} |V_2| + |\mathcal{H}[V_2]| + \xi n^3
        \le \frac{x_1^2 x_2 n^3}{2}
            + (\beta + \xi) \frac{(x_2 n)^3}{6}
            + \xi n^3.
    \end{align*}
    Combining this inequality with Fact~\ref{FACT:ineqality} and the fact that $x_2 = 1-x_1 \le 1/2$, we obtain 
    \begin{align*}
        \beta 
        \le \frac{3 x_1^2 x_2}{1-x_2^3} + \frac{(7+x_2^3) \xi}{1-x_2^3}  
        \le \alpha  + \frac{57 \xi}{7}. 
    \end{align*}
    Letting $\xi \to 0$, we obtain that $\pi(\CC) = \beta \le \alpha $, which proves Theorem~\ref{THM:turan-density-C5K4'}. 
\end{proof}

\section{Stability}\label{SEC:Proof-K4C5-stability}
In this section, we prove Theorem~\ref{THM:C5K4-stability'} (which implies Theorem~\ref{THM:C5K4-stability}).

We begin by establishing the following weaker form of stability. 
\begin{lemma}\label{LEMMA:weak-stability}
    For every $\xi > 0$, there exist $\delta_{\ref{LEMMA:weak-stability}}  = \delta_{\ref{LEMMA:weak-stability}}(\xi) > 0$ and  $N_{\ref{LEMMA:weak-stability}} = N_{\ref{LEMMA:weak-stability}}(\xi)$ such that the following holds for all $n \ge N_{\ref{LEMMA:weak-stability}}$. 
    Suppose that $\mathcal{H}$ is an $n$-vertex $\CC$-free $3$-graph with at least $\left(\frac{\alpha}{6} - \delta_{\ref{LEMMA:weak-stability}} \right) n^3$ edges.
    Then there exists a partition $V_1 \cup V_2 = V(\mathcal{H})$ such that 
    \begin{enumerate}[label=(\roman*)]
        \item\label{LEMMA:weak-stability-1}  $\left||V_1| -\gamma n \right| \le \sqrt{12 \xi} n$, 
        \item\label{LEMMA:weak-stability-2} $\max\left\{|B|,\,|M|\right\} \le 12000 \xi n^3$, where $B=B_{\mathcal{H}}(V_1,V_2)$ and $M=M_{\mathcal{H}}(V_1,V_2)$ are defined in~\eqref{equ:def-bad-triple} and~\eqref{equ:def-missing-triple},  and
        \item\label{LEMMA:weak-stability-3} $|\mathcal{H}[V_2]| \ge \left(\frac{\alpha}{6} - 1500\xi \right) |V_2|^3$. 
    \end{enumerate}
\end{lemma}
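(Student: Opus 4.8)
The plan is to run the recursive machinery of Lemma~\ref{LEMMA:recursion-upper-bound} once, extract the top-level partition $V_1\cup V_2$ it produces, and then show that the near-maximality of $|\mathcal{H}|$ forces this partition to be nearly balanced in the ratio $\gamma:(1-\gamma)$, to have few bad and missing edges, and to leave an almost-extremal $3$-graph inside $V_2$. First I would apply Lemma~\ref{LEMMA:recursion-upper-bound} with its parameter (call it $\xi'$) chosen to be a small constant multiple of $\xi$, say $\xi'=\xi/C$ for a suitable absolute constant $C$, and set $\delta_{\ref{LEMMA:weak-stability}}\le \delta_{\ref{LEMMA:recursion-upper-bound}}(\xi')$; this yields a partition with $\frac n2\le|V_1|\le\frac{4n}5$ and
\begin{align}\label{equ:weak-stab-recursion}
    |\mathcal{H}|
    \le \binom{|V_1|}{2}|V_2| + |\mathcal{H}[V_2]| + \xi' n^3 - \max\left\{\frac{|B|}{3999},\,\frac{|M|}{4000}\right\}.
\end{align}
Writing $x_i=|V_i|/n$ and using the crude bound $|\mathcal{H}[V_2]|\le \mathrm{ex}(|V_2|,\CC)\le(\frac\alpha6+\xi')\,x_2^3 n^3$ (valid for $n$ large by the definition of $\pi(\CC)=\alpha$ from Theorem~\ref{THM:turan-density-C5K4'}, since $|V_2|\ge n/5$), together with the lower bound $|\mathcal{H}|\ge(\frac\alpha6-\delta_{\ref{LEMMA:weak-stability}})n^3$, substitute into \eqref{equ:weak-stab-recursion} to get
\begin{align*}
    \frac\alpha6 - \delta_{\ref{LEMMA:weak-stability}}
    \le \frac12 x_1^2 x_2 + \left(\frac\alpha6+\xi'\right)x_2^3 + \xi' - \frac1{6n^3}\max\left\{\frac{|B|}{3999},\,\frac{|M|}{4000}\right\}.
\end{align*}

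Since $x_1\in[1/2,4/5]$, Fact~\ref{FACT:ineqality}\ref{FACT:ineqality-2} gives $\frac12 x_1^2 x_2 + \frac\alpha6 x_2^3 \le \frac\alpha6 - \frac14(x_1-\gamma)^2$. Plugging this in and rearranging yields simultaneously (i) $\frac14(x_1-\gamma)^2 \le \delta_{\ref{LEMMA:weak-stability}} + 2\xi' \le 3\xi'$ (after absorbing $\delta_{\ref{LEMMA:weak-stability}}$ and the $\xi' x_2^3$ term), hence $|x_1-\gamma|\le\sqrt{12\xi'}\le\sqrt{12\xi}$, which is item~\ref{LEMMA:weak-stability-1}; and (ii) $\frac1{6n^3}\max\{|B|/3999,\,|M|/4000\}\le 3\xi'$, hence $\max\{|B|,|M|\}\le 3\cdot4000\cdot 6\,\xi' n^3 = 72000\,\xi'n^3$, which gives item~\ref{LEMMA:weak-stability-2} provided we chose $\xi'\le \xi/6$ (so $72000\xi'\le 12000\xi$). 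Finally, for item~\ref{LEMMA:weak-stability-3} I would go back to $|\mathcal{H}| = \binom{|V_1|}{2}|V_2| - |M| + |B| + |\mathcal{H}[V_2]|$ and isolate $|\mathcal{H}[V_2]|$: from the lower bound on $|\mathcal{H}|$ and the upper bounds $\binom{|V_1|}{2}|V_2|\le \frac12 x_1^2 x_2 n^3$, $|B|\le 12000\xi n^3$, together with Fact~\ref{FACT:ineqality}\ref{FACT:ineqality-2} once more to replace $\frac12 x_1^2 x_2$ by $\frac\alpha6(1-x_2^3) = \frac\alpha6 - \frac\alpha6 x_2^3$ up to a subtracted square, one gets $|\mathcal{H}[V_2]| \ge \frac\alpha6 x_2^3 n^3 - O(\xi)n^3 \ge (\frac\alpha6 - 1500\xi)|V_2|^3$ after checking that $|V_2|^3 = x_2^3 n^3$ and that all error terms combine into at most $1500\xi|V_2|^3$ (here using $x_2\ge 1/5$, so $n^3\le 125|V_2|^3$, which is where the constant blows up to $1500$).

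The one genuine subtlety is bookkeeping the constants so that a single choice of $\xi'$ (as a fixed fraction of $\xi$) makes all three conclusions come out with the stated numerical constants $\sqrt{12\xi}$, $12000\xi$, $1500\xi$ simultaneously; in particular item~\ref{LEMMA:weak-stability-3} is the tightest because the factor $n^3/|V_2|^3 \le 125$ amplifies every error, so I would track that chain of inequalities most carefully and choose $\xi'$ small enough (and $\delta_{\ref{LEMMA:weak-stability}}$ small enough relative to $\xi'$) at the end. No new ideas beyond Lemma~\ref{LEMMA:recursion-upper-bound}, Theorem~\ref{THM:turan-density-C5K4'}, and Fact~\ref{FACT:ineqality} are needed — this lemma is essentially a quantitative re-reading of the Turán-density proof that keeps the slack term $\frac14(x_1-\gamma)^2$ and the $\max\{|B|/3999,|M|/4000\}$ term instead of throwing them away.
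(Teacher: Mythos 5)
Your proposal is correct in substance, and for the partition itself and for conclusions~\ref{LEMMA:weak-stability-1} and~\ref{LEMMA:weak-stability-2} it is the same argument as the paper's: apply Lemma~\ref{LEMMA:recursion-upper-bound}, bound $|\mathcal{H}[V_2]|$ by the Tur\'an number via Theorem~\ref{THM:turan-density-C5K4'}, and feed the result into Fact~\ref{FACT:ineqality}~\ref{FACT:ineqality-2} so that both the square term $\tfrac14(x_1-\gamma)^2$ and the term $\max\{|B|/3999,|M|/4000\}$ are controlled by the slack. (Your normalising factor $\tfrac{1}{6n^3}$ should be $\tfrac{1}{n^3}$, but this only weakens your intermediate claim from $12000\xi'n^3$ to $72000\xi'n^3$ and is absorbed by your requirement $\xi'\le\xi/6$, so it is harmless.) Where you genuinely diverge is conclusion~\ref{LEMMA:weak-stability-3}: the paper proves it by contradiction inside the same inequality~\eqref{equ:LEMMA:recursion-upper-bound} — assuming $|\mathcal{H}[V_2]|\le(\tfrac{\alpha}{6}-1500\xi)|V_2|^3$ creates a deficit of at least $1500\xi(1/5)^3n^3$, which already beats the $O(\xi)n^3$ slack, so no bound on $|B|$ and no rescaling of $\xi$ are needed and the lemma's constants come out directly from a single application of Lemma~\ref{LEMMA:recursion-upper-bound} with parameter $\xi$. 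Your route instead isolates $|\mathcal{H}[V_2]|$ from the exact identity $|\mathcal{H}|=\binom{|V_1|}{2}|V_2|-|M|+|B|+|\mathcal{H}[V_2]|$ and must pay the $|B|$ error amplified by $n^3/|V_2|^3\le125$; as you note, with $|B|\le12000\xi n^3$ the constants fail by roughly a factor of $10^3$, so the rescaling $\xi'\le\xi/12000$ (say) is not optional but essential, after which items~\ref{LEMMA:weak-stability-1} and~\ref{LEMMA:weak-stability-2} still hold since $\sqrt{12\xi'}\le\sqrt{12\xi}$ and $72000\xi'\le12000\xi$. Both treatments work; the paper's contradiction argument is the cleaner bookkeeping, while yours is a valid direct alternative provided you carry out the parameter choice you flagged.
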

\begin{proof}[Proof of Lemma~\ref{LEMMA:weak-stability}]
%
Fix $\xi > 0$. We may assume that $\xi$ is sufficiently small. 
Let $\delta_{\ref{LEMMA:recursion-upper-bound}} = \delta_{\ref{LEMMA:recursion-upper-bound}}(\xi) > 0$ and $N_{\ref{LEMMA:recursion-upper-bound}} = N_{\ref{LEMMA:recursion-upper-bound}}(\xi)$ be constants given by Lemma~\ref{LEMMA:recursion-upper-bound}. 
By reducing $\delta_{\ref{LEMMA:recursion-upper-bound}}$ if necessary, we may assume that $\delta_{\ref{LEMMA:recursion-upper-bound}} \le \xi$. 
Let $n$ be sufficiently large and let $\mathcal{H}$ be an $n$-vertex $\CC$-free $3$-graph with at least $\left(\frac{\alpha}{6} - \delta_{\ref{LEMMA:recursion-upper-bound}} \right) n^3$ edges.  
Similarly to the proof of Theorem~\ref{THM:turan-density-C5K4'}, we choose $n$ sufficiently large so that~\eqref{equ:turan-number-concentrate} holds. 

By Lemma~\ref{LEMMA:recursion-upper-bound}, there exists a partition $V(\mathcal{H}) = V_1 \cup V_2$ with $n/2 \le |V_1| \le 4n/5$ such that~\eqref{equ:LEMMA:recursion-upper-bound} holds.
Let $x_i \coloneqq |V_i|/n$ for $i \in [2]$. 
Combining~\eqref{equ:LEMMA:recursion-upper-bound} and~\eqref{equ:turan-number-concentrate} with the assumption $|\mathcal{H}| \ge \left(\frac{\alpha}{6} - \delta_{\ref{LEMMA:recursion-upper-bound}} \right) n^3 \ge \left(\frac{\alpha}{6} - \xi \right) n^3$, we obtain 
\begin{align}
    \left(\frac{\alpha}{6} - \xi\right) n^3
    \le |\mathcal{H}|
    & \le \frac{x_1^2 x_2 n^3}{2} + |\mathcal{H}[V_2]| + \xi n^3 - \max\left\{\frac{|B|}{3999},\,\frac{|M|}{4000}\right\} \notag \\
    & \le \frac{x_1^2 x_2 n^3}{2} + \left(\alpha  + \xi \right) \frac{(x_2 n)^3}{6} + \xi n^3  - \max\left\{\frac{|B|}{3999},\,\frac{|M|}{4000}\right\}  \notag \\
    & \le \left(\frac{x_1^2 x_2}{2} + \frac{\alpha}{6} x_2^3\right) n^3 + 2\xi n^3 - \max\left\{\frac{|B|}{3999},\,\frac{|M|}{4000}\right\} \notag \\
    & \le \frac{\alpha}{6}\, n^3 - \frac{(x_1-\gamma)^2}{4}\, n^3 + 2\xi n^3 - \max\left\{\frac{|B|}{3999},\,\frac{|M|}{4000}\right\}, \label{equ:LEMMA:recursion-upper-bound-2}
\end{align}
where the last inequality follows from Fact~\ref{FACT:ineqality}~\ref{FACT:ineqality-2}.

It follows from~\eqref{equ:LEMMA:recursion-upper-bound-2} that $\frac{(x_1-\gamma)^2}{4} \le 3\xi$ and $\max\left\{\frac{|B|}{3999},\,\frac{|M|}{4000}\right\} \le 3\xi n^3$, implying respectively Conclusions~\ref{LEMMA:weak-stability-1} and~\ref{LEMMA:weak-stability-2} of Lemma~\ref{LEMMA:weak-stability}. 

    Next, we prove Lemma~\ref{LEMMA:weak-stability}~\ref{LEMMA:weak-stability-3}.
    Suppose to the contrary that $|\mathcal{H}[V_2]| \le \left(\frac{\alpha}{6} - 1500 \xi\right) x_2^3 n^3$. 
    Then, similarly to the proof above, it follows from~\eqref{equ:LEMMA:recursion-upper-bound} and Fact~\ref{FACT:ineqality}~\ref{FACT:ineqality-2} that  
        \begin{align*}
            \left(\frac{\alpha}{6} - \xi\right) n^3 
            & \le \frac{x_1^2 x_2 n^3}{2} + \left(\frac{\alpha}{6} - 1500 \xi\right) x_2^3 n^3 + \xi n^3  \\
            & = \left(\frac{x_1^2 x_2}{2} + \frac{\alpha}{6} x_2^3\right) n^3 - \frac{1500 \xi x_2^3 n^3}{6} + \xi n^3  \\
            & \le \frac{\alpha}{6} n^3 - \frac{1500 \xi (1/5)^3 n^3}{6} + \xi n^3
            < \left(\frac{\alpha}{6} - \xi\right) n^3, 
        \end{align*}
        a contradiction. Here, in the second to the last inequality, we used the fact that $x_2 = 1-x_1 \ge 1/5$. 
\end{proof}

Now we are ready to prove Theorem~\ref{THM:C5K4-stability'}. 
\begin{proof}[Proof of Theorem~\ref{THM:C5K4-stability'}]
    Fix $\varepsilon > 0$. We may assume that $\varepsilon$ is sufficiently small. 
    Let $k \in \mathbb{N}$ be an integer such that $(1/2)^{k-1} \le \varepsilon$. 
    Let $0 < \delta \coloneqq \varepsilon_0 \ll \varepsilon_1 \ll \ldots \ll \varepsilon_k \ll \varepsilon$ be sufficiently small constants such that, in particular, $\varepsilon_{i-1} \le \min\left\{\delta_{\ref{LEMMA:weak-stability}}(\varepsilon_{i}/12000),\,\varepsilon/2^{k-i+2} \right\}$ for $i \in [k]$, where $\delta_{\ref{LEMMA:weak-stability}} \colon (0,1) \to (0,1)$ is the function given by Lemma~\ref{LEMMA:weak-stability}.
    
    Let $n$ be sufficiently large; in particular, we can assume that 
    \begin{align*}
        \frac{n}{5^k}
        \ge \max\left\{N_{\ref{LEMMA:weak-stability}}\left(\frac{\varepsilon_i}{12000}\right) \colon i \in [k]\right\}, 
    \end{align*}
    where $N_{\ref{LEMMA:weak-stability}} \colon (0,1) \to \mathbb{N}$ is the function given by Lemma~\ref{LEMMA:weak-stability}.

    Let us prove by a backward induction on $i$ that for every $i \in [k]$ and $m \in \left[n/5^{i-1}, n/2^{i-1}\right]$, every $m$-vertex $\CC$-free $3$-graph $\C H$ with least $\left(\frac{\alpha}{6} - \varepsilon_{i-1} \right)m^3$ edges can be transformed into a $\mathbb{B}_{\mathrm{rec}}$-subconstruction by removing at most $(\varepsilon_i + \cdots + \varepsilon_k + \varepsilon/6) n^3$ edges.
    
    The base case $i = k$ is trivially true since, by assumption, we have $m \le n/2^{k-1} \le \varepsilon n$, and hence, $\binom{m}{3} \le \varepsilon n^3/6$.
    So we may assume that $i \le k-1$. 

    Fix $i \in [k-1]$ and fix an arbitrary integer $m \in \left[n/5^{i-1}, n/2^{i-1}\right]$.
    Let $\mathcal{H}$ be an  $m$-vertex $\CC$-free $3$-graph with at least $\left(\frac{\alpha}{6} - \varepsilon_{i-1} \right)m^3$ edges. 
    Since 
    \begin{align*}
        m 
        \ge n/5^{i-1} 
        \ge n/5^k 
        \ge N_{\ref{LEMMA:weak-stability}}(\varepsilon_{i}/12000)
        \quad\text{and}\quad 
        \varepsilon_{i-1} 
        \le \delta_{\ref{LEMMA:weak-stability}}(\varepsilon_{i}/12000), 
    \end{align*}
    applying Lemma~\ref{LEMMA:weak-stability} to $\mathcal{H}$ with $\xi$ there corresponding to $\varepsilon_i/12000$ here, we obtain a partition $V(\mathcal{H}) = V_1 \cup V_2$ such that 
    \begin{enumerate}[label=(\roman*)]
        \item\label{item:Vi-size} $|V_1| \in \left[\left(\gamma - \sqrt{\frac{12 \varepsilon_{i}}{12000}}\right) m, \left(\gamma + \sqrt{\frac{12 \varepsilon_{i}}{12000}}\right) m\right] \subseteq \left[\frac{m}{2}, \frac{4m}{5}\right]$, 
        \item\label{item:B-size} $|B_{\mathcal{H}}(V_1, V_2)| \le 12000 \cdot \frac{\varepsilon_i}{12000} m^3 = \varepsilon_i m^3 \le \varepsilon_i n^3$, and 
        \item\label{item:H-Vi-size} $|\mathcal{H}[V_2]| \ge \left(\frac{\alpha}{6} - 1500 \cdot \frac{\varepsilon_i}{12000} \right) |V_2|^3 \ge \left(\frac{\alpha}{6} - \varepsilon_i \right) |V_2|^3$. 
    \end{enumerate}
    It follows from~\ref{item:Vi-size} that $|V_2| \in \left[m/5, m/2\right] \subseteq \left[n/5^{i}, n/2^{i}\right]$. 
    So, by~\ref{item:H-Vi-size} and the inductive hypothesis, $\mathcal{H}[V_2]$ is a $\mathbb{B}_{\mathrm{rec}}$-subconstruction after removing at most $\left(\varepsilon_{i+1} + \cdots + \varepsilon_{k} + \varepsilon/6\right)n^3$ edges. 
    Combining it with~\ref{item:B-size}, we see that $\mathcal{H}$ is a $\mathbb{B}_{\mathrm{rec}}$-subconstruction after removing at most 
    \begin{align*}
        |B_{\mathcal{H}}(V_1, V_2)| +\left(\varepsilon_{i+1} + \cdots + \varepsilon_{k} + \varepsilon/6\right)n^3 
        \le \left(\varepsilon_{i} + \cdots + \varepsilon_{k} + \varepsilon/6\right)n^3
    \end{align*}
    edges. 
    This completes the proof for the inductive step. 
    
    Taking $i = 1$ and $m = n$, we obtain that every $n$-vertex $\CC$-free $3$-graph with at least $\left(\frac{\alpha}{6} - \varepsilon_0 \right) n^3$ edges is a $\mathbb{B}_{\mathrm{rec}}$-subconstruction after removing at most 
    \begin{align*}
        \left(\varepsilon_{1} + \cdots + \varepsilon_{k} + \frac{\varepsilon}{6}\right)n^3 
        \le \left(\sum_{i=1}^{k} \frac{\varepsilon}{2^{k-i+2}} + \frac{\varepsilon}{6}\right) n^3
        \le \varepsilon n^3
    \end{align*}
    edges.
    This completes the proof of Theorem~\ref{THM:C5K4-stability'}.
\end{proof}

\section{Finer structure}\label{SEC:proof-C5-exact}
In this section, we prove Theorem~\ref{THM:K4C5-first-level-exact}. 
First, we present a preliminary result for graphs. 

We represent a $4$-vertex path on the set $\{a, b, c, d\}$ with the edge set $\{ab, bc, cd\}$ using the ordered $4$-tuple $(a,b,c,d)$. 
Let $G$ be a graph, and let $V_1 \cup V_2 = V(G)$ be a partition of $V(G)$.  
We say a $4$-vertex path $(a,b,c,d)$ in $G$ is  
\begin{enumerate}[label=(\roman*)]
    \item of \textbf{type-1} (with respect to $(V_1, V_2)$) if $\{a,b,c\} \subseteq V_1$ and $d \in V_2$, and  
    \item of \textbf{type-2} (with respect to $(V_1, V_2)$) if $\{a,d\} \subseteq V_1$ and $\{b,c\} \subseteq V_2$.  
\end{enumerate}

\begin{proposition}\label{PROP:P4-free}
    For every $\xi > 0$, there exist $\delta > 0$ and $n_0$ such that the following holds for all $n \ge n_0$. 
    Suppose that $G$ is an $n$-vertex graph and $V_1 \cup V_2 = V(G)$ is a partition such that 
    \begin{enumerate}[label=(\roman*)]
        \item\label{PROP:P4-free-1} the number of both type-1 and type-2 four-vertex paths in $G$ is at most $\delta n^4$, 
        \item\label{PROP:P4-free-2} $\left| |V_1| - \gamma n \right| \le \delta n$, and 
        \item\label{PROP:P4-free-3} $|G[V_2]| \le \left(\frac{\alpha}{2} + \delta \right) |V_2|^2$.  
    \end{enumerate}
    Then $|G| \le \left(\frac{\alpha}{2} + \xi \right) n^2$. 
    Moreover, if $|G| \ge \left(\frac{\alpha}{2} - \delta \right) n^2$, then 
    \begin{align*}
        \mbox{either}\quad 
        |G[V_1, V_2]| \le \xi n^2
        \quad\mbox{or}\quad 
        |G[V_1]| + |G[V_2]| \le \xi n^2. 
    \end{align*}
\end{proposition}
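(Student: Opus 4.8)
We may assume $\xi$ is small. The first move is to pass, at the cost of an $o(n^2)$ additive error, to the case where $G$ contains \emph{no} type-1 and \emph{no} type-2 four-vertex path at all. Each of these is a vertex-coloured pattern on $4$ vertices, and by hypothesis~\ref{PROP:P4-free-1} the graph $G$ has at most $\delta n^4$ copies of each; so by a standard removal-lemma argument (for coloured patterns) one can delete $o(n^2)$ edges (the bound tending to $0$ with $\delta$) to destroy all of them. This perturbs $|G|$, $|G[V_1,V_2]|$, $|G[V_1]|$ and $|G[V_2]|$ by only $o(n^2)$ and keeps hypotheses~\ref{PROP:P4-free-2} and~\ref{PROP:P4-free-3} valid, so (choosing $\delta$ small enough) it suffices to prove both conclusions, with $\xi$ and $\delta$ shrunk slightly, under the extra assumption that $G$ has no such path.

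So assume $G$ has no type-1 and no type-2 four-vertex path; write $\gamma_1:=|V_1|/n=\gamma+O(\delta)$, $\gamma_2:=|V_2|/n=1-\gamma+O(\delta)$. Put $A:=\{v\in V_1:|N_G(v)\cap V_2|\ge 1\}$. If some $v\in A$ had a neighbour $b\in V_1$ with $|N_G(b)\cap V_1|\ge 2$, then a $V_1$-neighbour of $b$ other than $v$, together with $b$, $v$ and a $V_2$-neighbour of $v$, would form a type-1 path; hence every edge of $G[V_1]$ meeting $A$ has an endpoint of $G[V_1]$-degree $1$, and as such endpoints are distinct for distinct edges, $|G[V_1]|\le\binom{|V_1\setminus A|}{2}+|V_1|$. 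Also $|A|\ge |G[V_1,V_2]|/|V_2|$, since each vertex of $A$ has at most $|V_2|$ neighbours in $V_2$. Dually, put $C:=\{v\in V_2:|N_G(v)\cap V_1|\le 1\}$: if an edge of $G[V_2]$ had both endpoints outside $C$, suitable $V_1$-neighbours of the two endpoints would extend it to a type-2 path, so every edge of $G[V_2]$ is incident to $C$ and $|G[V_2]|\le |C|\,|V_2|$. Finally, since $\sum_{v\in V_2}(|V_1|-|N_G(v)\cap V_1|)=|V_1||V_2|-|G[V_1,V_2]|$ and each $v\in C$ contributes at least $|V_1|-1$, we get $|C|\le (|V_1||V_2|-|G[V_1,V_2]|)/(|V_1|-1)$.

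Now set $x:=|G[V_1,V_2]|/n^2\in[0,\gamma_1\gamma_2]$. The above gives $|G[V_1]|\le\tfrac12(\gamma_1-x/\gamma_2)^2n^2+O(n)$, and two bounds on $|G[V_2]|$: hypothesis~\ref{PROP:P4-free-3} yields $|G[V_2]|\le(\tfrac\alpha2+\delta)\gamma_2^2n^2$, while the structural estimate yields $|G[V_2]|\le\tfrac{\gamma_2(\gamma_1\gamma_2-x)}{\gamma_1}n^2+o(n^2)$. Hence $|G|\le n^2\min\{\Psi_1(x),\Psi_2(x)\}+O(\delta)n^2+o(n^2)$, where (up to the $O(\delta)$ coming from replacing $\gamma_i$ by $\gamma$, $1-\gamma$) $\Psi_1(x):=\tfrac12\big(\gamma-\tfrac{x}{1-\gamma}\big)^2+x+\tfrac\alpha2(1-\gamma)^2$ and $\Psi_2(x):=\tfrac12\big(\gamma-\tfrac{x}{1-\gamma}\big)^2+x+\tfrac{(1-\gamma)(\gamma(1-\gamma)-x)}{\gamma}$. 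Both are strictly convex. Using the identities $\gamma(1-\gamma)=\alpha/2$ and $\gamma^2+\alpha(1-\gamma)^2=\alpha$ one computes $\Psi_1(0)=\alpha/2=\Psi_1(x_1)$ with $x_1=(4\gamma-2)(1-\gamma)\in(0,\gamma(1-\gamma))$, $\Psi_2(\gamma(1-\gamma))=\alpha/2$, and $\Psi_2(x_1)<\alpha/2$ (numerically $\Psi_2(x_1)\approx 0.222$). Strict convexity then forces $\min\{\Psi_1(x),\Psi_2(x)\}\le\alpha/2$ for all admissible $x$, with equality only at $x\in\{0,\gamma(1-\gamma)\}$; this yields $|G|\le(\tfrac\alpha2+\xi)n^2$. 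For the ``moreover'' part, if $|G|\ge(\tfrac\alpha2-\delta)n^2$ then by strict convexity and compactness $x$ must lie within $\xi/5$ of $0$ or of $\gamma(1-\gamma)$; in the first case $|G[V_1,V_2]|=xn^2\le\xi n^2$, and in the second case the bound on $|C|$ forces $|G[V_2]|\le\xi n^2/2$ while $|V_1\setminus A|\le\xi n/5$ forces $|G[V_1]|\le\xi n^2/2$, so $|G[V_1]|+|G[V_2]|\le\xi n^2$.

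The main obstacle is the reduction in the first paragraph: one really has to reach a graph with \emph{no} bad path, because from only $\delta n^4$ type-1 paths the natural thresholding loses a factor of $n$ (the relevant $P_3$-counts are cubic, so no quadratic quantity is controlled), and it is the removal lemma that converts hypothesis~\ref{PROP:P4-free-1} into the clean structural dichotomy on which the convexity computation rests. The remaining work is a finite (if slightly delicate) one-variable optimisation, in which the exact value of $\gamma$ together with hypotheses~\ref{PROP:P4-free-2} and~\ref{PROP:P4-free-3} are precisely what make the convex envelopes $\Psi_1,\Psi_2$ touch the target $\alpha/2$ only at the two extremal profiles — the complete bipartite graph between $V_1$ and $V_2$, and a near-complete $G[V_1]$ with an extremal $G[V_2]$ and essentially no cross-edges.
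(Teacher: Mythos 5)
Your argument is correct, but it takes a genuinely different route from the paper. You invoke the vertex-coloured (partition-respecting) graph removal lemma to reduce to a graph with \emph{no} type-1 or type-2 paths, then read off exact structure (every edge of $G[V_1]$ meeting $A$ has a $G[V_1]$-degree-one endpoint, so such edges form stars; every edge of $G[V_2]$ meets the set $C$ of vertices with at most one $V_1$-neighbour), and finish with a one-variable convexity analysis of $\min\{\Psi_1,\Psi_2\}$; your identities $\gamma(1-\gamma)=\alpha/2$ and $\gamma^2+\alpha(1-\gamma)^2=\alpha$, the second root $x_1=2(2\gamma-1)(1-\gamma)$, and the values $\Psi_1(0)=\Psi_1(x_1)=\Psi_2(\gamma(1-\gamma))=\alpha/2$, $\Psi_2(x_1)<\alpha/2$ all check out, and the ``moreover'' dichotomy follows as you say. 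The paper does the same optimisation in spirit (its Proposition~\ref{PROP:inequality-fxy} is a two-variable analogue of your $\min\{\Psi_1,\Psi_2\}\le\alpha/2$) but avoids the removal lemma entirely: it applies the elementary min-degree cleaning of Fact~\ref{FACT:min-deg-cleaning} to $G[V_1]$ and to $G[V_1,V_2]$ to extract a set $X\subseteq V_1$ and a bipartite pair $(Z_1,Z_2)$ with minimum degree $\varepsilon n$, and then uses hypothesis~\ref{PROP:P4-free-1} directly by greedy counting: a single vertex of $X\cap Z_1$, or $\varepsilon n^2$ edges inside $Z_2$, already generate on the order of $\varepsilon^{3}n^4>\delta n^4$ type-1 (resp.\ type-2) paths. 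So your assessment that only the removal lemma can convert hypothesis~\ref{PROP:P4-free-1} into usable structure is not accurate: minimum-degree amplification does the same job, with $\delta$ depending polynomially on $\xi$ rather than through a regularity tower (which is harmless here, since the statement only asserts existence of $\delta$, but is worth knowing). Two minor points: the coloured, non-induced removal lemma you use is standard but deserves a citation or a one-line regularity argument; and in your final case $x\ge\gamma(1-\gamma)-\xi/5$ the bound should be $|V_1\setminus A|\le \xi n/(5(1-\gamma))+O(\delta n)$ rather than $\xi n/5$ --- harmless, since $\binom{|V_1\setminus A|}{2}=O(\xi^2 n^2)\le \xi n^2/2$ still gives the stated conclusion.
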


We will use the following inequality in the proof of Proposition~\ref{PROP:P4-free}. 
\begin{proposition}\label{PROP:inequality-fxy}
    Let  
    \begin{align}\label{equ:fxy-def}
        f(x,y)
        \coloneqq \frac{x^2}{2} + \left(\gamma - x\right)(1- \gamma - y) + \min\left\{\frac{y^2}{2}+y(1-\gamma-y),\,\frac{\alpha}{2} (1-\gamma)^2\right\}. 
    \end{align}
    Then
    \begin{align}\label{equ:fxy-max}
        \max\left\{f(x,y) + \frac{\left(\gamma-x\right) \left(x+y\right)}{10} \colon (x,y) \in [0, \gamma] \times [0, 1-\gamma]\right\}
        \le \frac{\alpha}{2}. 
    \end{align} 
\end{proposition}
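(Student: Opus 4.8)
The plan is to treat this as a constrained optimization problem over the compact rectangle $[0,\gamma]\times[0,1-\gamma]$, splitting into the two cases corresponding to which branch achieves the minimum in the definition of $f(x,y)$. Write $g(x,y)\coloneqq f(x,y)+\frac{(\gamma-x)(x+y)}{10}$ for the objective. First I would record the value at the presumed maximizer: at $(x,y)=(\gamma, 0)$ one has $g(\gamma,0)=\gamma^2/2$, and since $\gamma=(3-\sqrt3)/2$ a direct computation gives $\gamma^2/2 = (3-\sqrt3)^2/8 = (12-6\sqrt3)/8 = (6-3\sqrt3)/4$, while $\alpha/2 = \sqrt3 - 3/2 = (2\sqrt3-3)/2 = (4\sqrt3-6)/4$; these agree since $6-3\sqrt3 = 4\sqrt3 - 6 \iff 12 = 7\sqrt3$, which is false — so I would instead verify that the claimed maximum $\alpha/2$ is attained on the second branch where the $\min$ equals $\frac{\alpha}{2}(1-\gamma)^2$, in which case $f(x,y)=\frac{x^2}{2}+(\gamma-x)(1-\gamma-y)+\frac{\alpha}{2}(1-\gamma)^2$; here the relation $1-\gamma=(\sqrt3-1)/2$ and $\frac{\alpha}{2}(1-\gamma)^2 = (\sqrt3-\tfrac32)\cdot\frac{(\sqrt3-1)^2}{4}$ should be simplified explicitly, and one checks $g$ at $(x,y)=(\gamma,1-\gamma)$ (where the cross term vanishes and $f$ reduces to $\gamma^2/2+\frac\alpha2(1-\gamma)^2$) equals $\alpha/2$ by the defining quadratic $\gamma^2 - 3\gamma + 3/2 = 0$ satisfied by $\gamma$, equivalently the identity from Fact~\ref{FACT:ineqality}\ref{FACT:ineqality-2} with $x_1=\gamma$.

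Next I would carry out the maximization on each branch. On \textbf{Branch A}, where $\frac{y^2}{2}+y(1-\gamma-y)\le \frac{\alpha}{2}(1-\gamma)^2$, the objective $g$ is an explicit polynomial of degree $2$ in $x$ and degree $2$ in $y$; I would show it is concave in $x$ for fixed $y$ (the $x^2$ coefficient is $\frac12 - \frac{1}{10}<0$... actually $\frac12$ from $x^2/2$ minus... let me just say: compute $\partial^2 g/\partial x^2$ and confirm its sign), solve $\partial g/\partial x = 0$ to get the optimal $x=x^*(y)$, substitute back, and reduce to a one-variable inequality in $y\in[0,1-\gamma]$ which is then checked by elementary calculus (finding the finitely many critical points and comparing with the endpoint values $y=0$ and $y=1-\gamma$). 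On \textbf{Branch B}, where the $\min$ equals the constant $\frac{\alpha}{2}(1-\gamma)^2$, the objective becomes $g(x,y)=\frac{x^2}{2}+(\gamma-x)(1-\gamma-y)+\frac{\alpha}{2}(1-\gamma)^2+\frac{(\gamma-x)(x+y)}{10}$, which is again a low-degree polynomial; here I would optimize over $y$ first (the $y$-dependence is linear once $x$ is fixed, up to the constraint $y\in[0,1-\gamma]$, so the max is at an endpoint or where the linear coefficient vanishes), then optimize the resulting expression over $x\in[0,\gamma]$. Throughout one uses the defining identity $\gamma^2=3\gamma-\tfrac32$ (equivalently $(\gamma-1)(1-\gamma)+ \text{stuff}$) and $\alpha = 2\sqrt3-3$ to collapse the final bounds to exactly $\alpha/2$.

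The main obstacle I expect is bookkeeping rather than conceptual: the two branches meet along the curve $\frac{y^2}{2}+y(1-\gamma-y)=\frac{\alpha}{2}(1-\gamma)^2$, and one must ensure the candidate interior critical points actually lie in the correct branch's region (and in the rectangle), so a careful case analysis of where $x^*(y)$ and $y^*(x)$ fall relative to $[0,\gamma]$ and $[0,1-\gamma]$ is needed; the small perturbation term $\frac{(\gamma-x)(x+y)}{10}$ is nonnegative on the rectangle and only complicates the algebra slightly, but one must confirm it does not push the maximum above $\alpha/2$ — intuitively it is killed at the maximizer because there $x=\gamma$ makes the factor $(\gamma-x)$ vanish. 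Since all functions involved are explicit polynomials of degree at most $2$ in each variable, the whole argument is a finite (if tedious) computation, and I would present only the key reductions, the critical-point equations, and the endpoint evaluations, relegating the arithmetic verifications using $\gamma^2=3\gamma-\tfrac32$ to a single displayed line each.
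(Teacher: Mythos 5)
There is a genuine error at the heart of your Branch~A reduction: $g(x,y)=f(x,y)+\frac{(\gamma-x)(x+y)}{10}$ is \emph{convex}, not concave, in $x$. The $x^2$-coefficient is $\frac12-\frac1{10}=\frac25>0$ (the $\frac12$ comes from $\frac{x^2}{2}$, the $-\frac1{10}$ from expanding $\frac{(\gamma-x)(x+y)}{10}$, and the min term involves no $x$ at all). You hedge about the sign, but the plan as written asserts concavity and builds on it: the stationary point $x^{*}(y)$ obtained from $\partial g/\partial x=0$ is a \emph{minimum} in $x$, so substituting it back and checking a one-variable inequality in $y$ would bound the wrong quantity — that step fails. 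The correct consequence of the sign computation (and the reduction the paper actually uses) is the opposite one: for each fixed $y$, the maximum of $g(\cdot,y)$ over $[0,\gamma]$ is attained at an endpoint, so it suffices to bound $g(0,y)$ and $g(\gamma,y)$.

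Once on those two edges the argument is short and needs no branch split over the min (the min term is independent of $x$, so the convexity-in-$x$ step is valid globally, and on each edge the min of an increasing function of $y$ with a constant is handled by monotonicity): $g(0,y)$ is decreasing in $y$ with $g(0,0)=\gamma(1-\gamma)=\frac{\alpha}{2}$, and $g(\gamma,y)$ is nondecreasing in $y$ with $g(\gamma,1-\gamma)=\frac{\gamma^2}{2}+\frac{\alpha}{2}(1-\gamma)^2=\frac{\alpha}{2}$; this is essentially the paper's entire proof. Your endpoint evaluation at $(\gamma,1-\gamma)$ via $\gamma^2=3\gamma-\frac32$ is correct, and your Branch~B observation that the $y$-dependence is linear with slope $-\frac{9}{10}(\gamma-x)\le 0$ is fine, but the central maximization step of your proposal rests on the wrong curvature sign and must be replaced by the endpoint argument for the proof to go through.
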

\begin{proof}[Proof of Proposition~\ref{PROP:inequality-fxy}]
    Let $g(x,y) \coloneqq f(x,y) + \frac{\left(\gamma-x\right) \left(x+y\right)}{10}$. 
    Notice that for every fixed $y \in [0,1-\gamma]$, $g(x,y)$ is a quadratic polynomial in $x$ with a leading coefficient of $\frac{1}{2} - \frac{1}{10} > 0$. This implies that, for fixed $y$, $g(x,y)$ attains its maximum at the endpoints of the interval $[0, \gamma]$, meaning that $g(x,y) \le \max\left\{g(0, y),\,g(\gamma, y)\right\}$ for every $y \in [0, 1-\gamma]$. 
    Therefore, to prove~\eqref{equ:fxy-max}, it suffices to show that 
    \begin{align*}
        \max\left\{g(0, y),\,g(\gamma, y)\right\} 
        \le \frac{\alpha}{2}. 
    \end{align*}
    Straightforward calculations show that $g(0, y)$ is decreasing in $y$ and satisfies $g(0,0) = \alpha/2$, while $g(\gamma, y)$ is increasing in $y$ and satisfies $g(\gamma,1-\gamma) = \alpha/2$. Therefore, we have $g(0, y) \le \alpha/2$ and $g(\gamma, 1-\gamma) \le \alpha/2$. 
    This completes the proof of Proposition~\ref{PROP:inequality-fxy}. 
\end{proof}

We are now ready to prove Proposition~\ref{PROP:P4-free}. 
\begin{proof}[Proof of Proposition~\ref{PROP:P4-free}]
    Fix $\xi > 0$. Choose sufficiently small constants $\delta, \varepsilon > 0$ such that $\delta \ll \varepsilon \ll \xi$. 
    Let a $3$-graph $G$ of sufficiently large order $n$ and a partition $V_1 \cup V_2 = V(G)$ be as specified in  Proposition~\ref{PROP:P4-free}. 

If $|G[V_1]| \le \varepsilon n^2$, then define $X \coloneqq \emptyset$. Otherwise, applying Fact~\ref{FACT:min-deg-cleaning} to $G[V_1']$, where $V_1'$ is some subset of $V_1$ of size $\min\{\gamma n,|V_1|\}$, we obtain a set $X \subseteq V_1'$ such that the minimum degree of the induced subgraph $G[X]$ is at least $\varepsilon n$ and 
    $|G[X]|\ge |G[V_1']|-\e n|V_1|$. The last inequality implies that
    \begin{align}\label{equ:G-V1-upper-bound}
        |G[V_1]|
        & \le |G[X]| + \varepsilon n|V_1|+n|V_1\setminus V_1'| 
        \le \frac{|X|^2}{2} + 2\varepsilon n^2. 
    \end{align}
    
    Similarly, if $|G[V_1, V_2]| \le \varepsilon n^2$, then define $Z_1 = Z_2 \coloneqq \emptyset$. Otherwise, applying Fact~\ref{FACT:min-deg-cleaning} to the bipartite graph $G[V_1, V_2]$, we obtain sets $(Z_1, Z_2) \subseteq V_1 \times V_2$ such that the minimum degree of the induced bipartite subgraph  $G[Z_1, Z_2]$ is at least $\varepsilon n$, and moreover, 
    \begin{align}\label{equ:G-V1-V2-upper-bound}
        |G[V_1,V_2]|
        & \le |G[Z_1, Z_2]| + \varepsilon n^2
        \le |Z_1||Z_2| + \varepsilon n^2. 
    \end{align}
    \begin{claim}\label{CLAIM:XYZ-disjoint}
        We have $|X \cap Z_1| \le \varepsilon n$ and $|G[Z_2]| \le \varepsilon n^2$. 
    \end{claim}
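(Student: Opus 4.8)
The plan is to establish both inequalities by contradiction: in each case I will produce more than $\delta n^4$ four-vertex paths of type-1 (resp.\ type-2), contradicting hypothesis~\ref{PROP:P4-free-1}, provided $\delta$ was chosen small enough in terms of $\varepsilon$ (say $\delta<\varepsilon^4/4$) and $n_0$ is large. Recall that by construction $\delta(G[X])\ge\varepsilon n$ whenever $X\neq\emptyset$, and whenever $Z_1,Z_2\neq\emptyset$ every vertex of $Z_1$ has at least $\varepsilon n$ neighbours in $Z_2$ and every vertex of $Z_2$ has at least $\varepsilon n$ neighbours in $Z_1$. If $X=\emptyset$ or $Z_1=Z_2=\emptyset$ the relevant bound ($|X\cap Z_1|=0$, resp.\ $|G[Z_2]|=0$) is trivial, so I may assume $X,Z_1,Z_2$ are all non-empty.

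For $|X\cap Z_1|\le\varepsilon n$: suppose not and fix $c\in X\cap Z_1$. Since $c\in X$ it has at least $\varepsilon n$ neighbours $b\in X\subseteq V_1$; each such $b$, again lying in $X$, has at least $\varepsilon n-1$ neighbours $a\in X\setminus\{c\}$. Since $c\in Z_1$ it has at least $\varepsilon n$ neighbours $d\in Z_2\subseteq V_2$. For any such choice, $a,b,c,d$ are four distinct vertices with $ab,bc\in G[X]$ and $cd\in G[Z_1,Z_2]$, so $(a,b,c,d)$ is a type-1 path; and distinct choices of $(c,b,a,d)$ produce distinct tuples, since the tuple $(a,b,c,d)$ records all four chosen vertices. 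Hence the number of type-1 paths is at least $|X\cap Z_1|\cdot\varepsilon n\cdot(\varepsilon n-1)\cdot\varepsilon n>\varepsilon^4 n^4/2$ for $n$ large, which exceeds $\delta n^4$, contradicting~\ref{PROP:P4-free-1}.

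For $|G[Z_2]|\le\varepsilon n^2$: suppose $|G[Z_2]|>\varepsilon n^2$, so there are more than $2\varepsilon n^2$ ordered pairs $(b,c)$ of distinct vertices of $Z_2$ with $bc\in G$. For each such pair pick a neighbour $a\in Z_1\subseteq V_1$ of $b$ (at least $\varepsilon n$ options) and then a neighbour $d\in Z_1$ of $c$ with $d\neq a$ (at least $\varepsilon n-1$ options); then $a,b,c,d$ are distinct, $ab,cd\in G[Z_1,Z_2]$ and $bc\in G[Z_2]$, so $(a,b,c,d)$ is a type-2 path, and again distinct choices of $(b,c,a,d)$ give distinct tuples. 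Thus the number of type-2 paths exceeds $2\varepsilon n^2\cdot\varepsilon n\cdot(\varepsilon n-1)>\varepsilon^3 n^4$ for $n$ large, again exceeding $\delta n^4$ and contradicting~\ref{PROP:P4-free-1}.

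The argument is short, and I do not expect a genuine obstacle. The only place that needs a bit of care is checking that the $4$-tuples produced are honest paths on four distinct vertices of the stated type (all distinctness relations hold either by edge choice or because the parts $V_1,V_2$ are disjoint) and that there is no hidden over-counting in the estimates (there is not, since each tuple records the full list of choices). Beyond that, one only has to fix the hierarchy $\delta\ll\varepsilon$ and take $n_0$ large enough to absorb the additive $-1$ terms in the degree bounds.
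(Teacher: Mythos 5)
Your proof is correct and follows essentially the same route as the paper: in both parts you greedily extend from $X\cap Z_1$ (resp.\ from edges of $G[Z_2]$) using the minimum-degree guarantees on $G[X]$ and $G[Z_1,Z_2]$ to produce more than $\delta n^4$ type-1 (resp.\ type-2) paths, contradicting hypothesis \ref{PROP:P4-free-1}. The counting and distinctness checks match the paper's argument, up to harmless constant factors absorbed by $\delta\ll\varepsilon$.
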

    \begin{proof}[Proof of Claim~\ref{CLAIM:XYZ-disjoint}]
        First, suppose to the contrary that $|X \cap Z_1| \ge \varepsilon n$. 
        By the definitions of $X$ and $Z_1$, it is easy to see from a simple greedy argument that the number of injective embeddings $\psi$ of a $4$-vertex path $(a,b,c,d)$ into $G$ with $\psi(c) \in X\cap Z$, $\psi(d) \in N_{G[Z_1, Z_2]}(\psi(c))$, $\psi(b) \in N_{G[X]}(\psi(c))$, and $\psi(a) \in N_{G[X]}(\psi(b)) \setminus \{\psi(c)\}$ is at least  
        \begin{align*}
            |X \cap Z| \cdot \varepsilon n \cdot \varepsilon n \cdot (\varepsilon n - 1)
            \ge \varepsilon^4 n^4/2
            > \delta n^4,
        \end{align*}
        contradicting the assumption that the number of  4-vertex paths of type-1 is at most $\delta n^4$. 

        Now suppose to the contrary that $|G[Z_2]| \ge \varepsilon n^2$. 
        By the definition of $Z_2$, it is easy to see from a simple greedy argument that the number of injective embeddings $\psi$ of a $4$-vertex path $(a,b,c,d)$ into $G$ with $\{\psi(b),\psi(c)\} \in G[Z_2]$, $\psi(a) \in N_{G[Z_1, Z_2]}(\psi(b))$, and $\psi(d) \in N_{G[Z_1, Z_2]}(\psi(c)) \setminus \{\psi(a)\}$ is at least  
        \begin{align*}
            |G[Z_2]| \cdot \varepsilon n \cdot  (\varepsilon n - 1)
            \ge \varepsilon^3 n^4/2
            > \delta n^4,
        \end{align*}
        contradicting the assumption that the number of type-2 four-vertex paths is at most $\delta n^4$.
        
        This completes the proof of Claim~\ref{CLAIM:XYZ-disjoint}. 
    \end{proof}

    Let $Y \coloneqq V_{2} \setminus Z_2$. 
    Let 
    \begin{align*}
        x \coloneqq |X|/n, \quad 
        y \coloneqq \min\left\{ |Y|/n,\,1-\gamma \right\}, \quad
        z_1 \coloneqq |Z_1|/n, \quad\text{and}\quad 
        z_2 \coloneqq |Z_2|/n. 
    \end{align*}
    
    It follows from Claim~\ref{CLAIM:XYZ-disjoint} and assumption~\ref{PROP:P4-free-2} that 
    \begin{align}
        z_1 
        & \le \frac{|V_1| - |X| + \varepsilon n}{n}
        \le \gamma - x + 2 \varepsilon, \quad\text{and} \label{equ:z-upper-bound} \\[0.3em]
        |G[V_2]| 
        & = |G[Y]|  + |G[Y, Z_2]| + |G[Z_2]|
        \le \frac{y^2 n^2}{2}  + y(1-\gamma -y)n^2 + 2 \varepsilon n^2. \label{equ:G-V2-upper-bound}
    \end{align}
    Combining~\eqref{equ:G-V1-upper-bound},~\eqref{equ:G-V1-V2-upper-bound},~\eqref{equ:z-upper-bound},~\eqref{equ:G-V2-upper-bound}, and Assumption~\ref{PROP:P4-free-3}, we obtain 
    \begin{align}
        \frac{|G|}{n^2}
        & \le \frac{x^2}{2} +  (\gamma - x)(1-\gamma-y) + \min\left\{\frac{y^2}{2}+y(1-\gamma -y),\,\frac{\alpha}{2}(1-\gamma)^2\right\} + 7 \varepsilon \notag \\[0.3em]
        & = f(x,y) + 7 \varepsilon
        \le \frac{\alpha}{2}  - \frac{\left(\gamma - x \right) \left(x + y\right)}{10}  + 7 \varepsilon. \label{equ:G-upper-bound-fxy}
    \end{align}
    where $f(x,y)$ was defined in~\eqref{equ:fxy-def}, and the last inequality follows from Proposition~\ref{PROP:inequality-fxy}.

    Since $\gamma - x \ge 0$ and $x+y \ge 0$, it follows from~\eqref{equ:G-upper-bound-fxy} that $|G| \le \frac{\alpha}{2} n^2 + 7 \varepsilon n^2 \le \frac{\alpha}{2} n^2 + \xi n^2$, proving the first part of Proposition~\ref{PROP:P4-free}.
    
    For the second part, suppose additionally that $|G| \ge \frac{\alpha}{2} n^2 - \delta n^2$. Then, by~\eqref{equ:G-upper-bound-fxy}, we have 
    \begin{align*}
        \frac{\left(\gamma-x \right) \left(x+y\right)}{10} 
        \le 7 \varepsilon + \delta 
        \le 10 \varepsilon,  
    \end{align*}
    which implies that 
    \begin{align*}
        \min\left\{\gamma - x,\,x+y\right\} \le 10 \sqrt{\varepsilon}. 
    \end{align*}
    Suppose that $|x - \gamma| \le 10 \sqrt{\varepsilon}$. 
    Then it follows from~\eqref{equ:z-upper-bound} that 
    \begin{align*}
        |Z_1| 
        = zn 
        \le \left(10\sqrt{\varepsilon} + 2\varepsilon\right) n 
        \le 12\sqrt{\varepsilon} n, 
    \end{align*}
    which, combined with~\eqref{equ:G-V1-V2-upper-bound}, implies that 
    \begin{align*}
        |G[V_1, V_2]|
        \le |G[Z_1, Z_2]| + \varepsilon n^2 
        \le 12\sqrt{\varepsilon} n^2 + \varepsilon n^2
        \le \xi n^2.
    \end{align*}
    Suppose that $x+y \le 10 \sqrt{\varepsilon}$. Then it follows from~\eqref{equ:G-V1-upper-bound} and~\eqref{equ:G-V2-upper-bound} that 
    \begin{align*}
        |G[V_1]| + |G[V_2]|
        & \le \frac{x^2 n^2}{2} + 2\varepsilon n^2  + \frac{y^2 n^2}{2}  + y(1-\gamma -y)n^2 + 4 \varepsilon n^2  \\
        & \le 10 \sqrt{\varepsilon} n^2 + 2\varepsilon n^2 
        \le \xi n^2. 
    \end{align*}
    This completes the proof of Proposition~\ref{PROP:P4-free}. 
\end{proof}

We are now ready to prove Theorem~\ref{THM:K4C5-first-level-exact}. Recall that it states that every maximum $\{C_{4}^{3}, C_{5}^{3}\}$-free $3$-graph of sufficiently large order admits a vertex partition $V_1\cup V_2$ so that all edges, apart from those inside $V_2$, are exactly as stipulated by the $\mathbb{B}_{\mathrm{rec}}$-construction.
\begin{proof}[Proof of Theorem~\ref{THM:K4C5-first-level-exact}]
    Fix $\varepsilon >0$. We may assume that $\varepsilon$ is small. 
    Choose constants $\delta, \delta_1, \ldots, \delta_{5} \in (0, 10^{-8})$ to be sufficiently small such that $\delta \ll \delta_1 \ll \cdots \ll \delta_{5} \ll \varepsilon$, and let $n \gg 1/\delta$ be sufficiently large. 
    Let $\mathcal{H}$ be an $n$-vertex $\{C_{4}^{3}, C_{5}^{3}\}$-free $3$-graph with $\mathrm{ex}(n,\{C_{4}^{3}, C_{5}^{3}\})$ edges, i.e., the maximum possible size. Since $n$ is sufficiently large and $\pi(\{C_{4}^{3}, C_{5}^{3}\}) = \alpha $, we have $|\mathcal{H}| \ge \left(\frac{\alpha}{6} - \delta\right)n^3$. 
    Since $n$ is sufficiently large, we have by Proposition~\ref{PROP:C5minus-max-degree} that 
    \begin{align}\label{equ:exact-min-max-deg}
        \left(\frac{\alpha}{2} - \delta \right) n^2 
        \le \delta(\mathcal{H})
        \le \Delta(\mathcal{H})
        \le \left(\frac{\alpha}{2} + \delta \right) n^2. 
    \end{align}
    
    Let $V \coloneqq V(\mathcal{H})$. 
    Let $V_1 \cup V_2 = V$ be the locally maximal partition returned by Lemma~\ref{LEMMA:weak-stability} and let $x_i \coloneqq |V_i|/n$ for $i \in [2]$. 
    Let 
    \begin{align*}
        \mathcal{G} \coloneqq \mathcal{H} \cap \mathbb{B}[V_1, V_2], 
        \quad 
        B \coloneqq B_{\C H}(V_1,V_2), \quad\text{and}\quad 
        M \coloneqq M_{\C H}(V_1,V_2), 
    \end{align*}
    where $B_{\C H}(V_1,V_2)$ and $M_{\C H}(V_1,V_2)$ are respectively the sets of bad edges and missing edges, as defined in~\eqref{equ:def-bad-triple} and~\eqref{equ:def-missing-triple}. 

By Lemma~\ref{LEMMA:weak-stability} and $\delta\ll \delta_1$,
we can assume that the following inequalities hold:
    \begin{align}
        & \max\left\{\, \left| x_1 - \gamma \right|,\,\left| x_2 - (1-\gamma)  \right|\,\right\} \le \delta_{1}, \label{equ:vtx-stab-a} \\[0.3em]
        & \max\left\{\,|B|,\,|M|\,\right\} 
          \le \delta_{1} n^{3}, \quad\text{and} \label{equ:vtx-stab-b}  \\[0.3em]
        & |\mathcal{H}[V_2]|
        \ge \left(\frac{\alpha}{6} - \delta_{1}\right)|V_2|^3. \label{equ:vtx-stab-d}
    \end{align}

    For convenience, for every vertex $v \in V$, let $L(v) \coloneqq L_{\mathcal{H}}(v)$. 
    For $i \in [2]$, let $L_{i}(v)$ denote the induced subgraph of $L(v)$ on $V_i$.
    Let $L_{12}(v)$ denote the induced bipartite subgraph of $L(v)$ with parts $V_1$ and $V_2$.


    \begin{claim}\label{CLAIM:B-M-max-deg}
        For every vertex $v\in V$, we have $d_{M}(v) \le d_{B}(v) + \delta_{2} n^2$. 
    \end{claim}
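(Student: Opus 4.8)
The plan is to prove the claim by a direct local degree computation, treating the cases $v\in V_1$ and $v\in V_2$ separately. Throughout I would use the decomposition $d_{\mathcal H}(v)=|L_1(v)|+|L_{12}(v)|+|L_2(v)|$, the degree bound~\eqref{equ:exact-min-max-deg} (so $d_{\mathcal H}(v)\ge(\alpha/2-\delta)n^2$ for every $v$), the size estimate~\eqref{equ:vtx-stab-a}, the lower bound~\eqref{equ:vtx-stab-d} on $|\mathcal H[V_2]|$, and the numerical identity $\gamma(1-\gamma)=\alpha/2$ coming from~\eqref{equ:gamma-alpha-def}. No vertex surgery is needed: the point is that~\eqref{equ:exact-min-max-deg} forces $v$ to have essentially the maximum possible degree, and one only has to control how much of that degree can lie \emph{inside} $V_2$.

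First I would dispose of the case $v\in V_1$. Classifying the triples through $v$ according to~\eqref{equ:def-bad-triple} and~\eqref{equ:def-missing-triple} gives $d_B(v)=|L_1(v)|+|L_2(v)|$ and $d_M(v)=(|V_1|-1)|V_2|-|L_{12}(v)|$, hence
\[
 d_M(v)-d_B(v)=(|V_1|-1)|V_2|-d_{\mathcal H}(v).
\]
Since $|x_1-\gamma|\le\delta_1$ and $t\mapsto t(1-t)$ is decreasing on $[\gamma-\delta_1,\gamma+\delta_1]$ (as $\gamma>1/2$), we get $|V_1||V_2|=x_1(1-x_1)n^2\le(\gamma(1-\gamma)+\delta_1)n^2=(\alpha/2+\delta_1)n^2$; together with $d_{\mathcal H}(v)\ge(\alpha/2-\delta)n^2$ this yields $d_M(v)-d_B(v)\le(\delta+\delta_1)n^2\le\delta_2 n^2$.

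The substantive case is $v\in V_2$. Here $d_B(v)=|L_{12}(v)|$ and $d_M(v)=\binom{|V_1|}{2}-|L_1(v)|$, so
\[
 d_M(v)-d_B(v)=\binom{|V_1|}{2}-|L_1(v)|-|L_{12}(v)|=\binom{|V_1|}{2}-d_{\mathcal H}(v)+|L_2(v)|.
\]
I would then bound the three terms: $\binom{|V_1|}{2}\le\frac12(\gamma+\delta_1)^2n^2\le(\gamma^2/2+\delta_1)n^2$ by~\eqref{equ:vtx-stab-a}; $d_{\mathcal H}(v)\ge(\alpha/2-\delta)n^2$ by~\eqref{equ:exact-min-max-deg}; and crucially $|L_2(v)|=d_{\mathcal H[V_2]}(v)\le\Delta(\mathcal H[V_2])$. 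By~\eqref{equ:vtx-stab-d}, $\mathcal H[V_2]$ is a $\{C_4^3,C_5^3\}$-free $3$-graph on $|V_2|$ vertices with at least $(\alpha/6-\delta_1)|V_2|^3$ edges, so Proposition~\ref{PROP:C5minus-max-degree}~\ref{PROP:C5minus-max-degree-1} gives $\Delta(\mathcal H[V_2])\le(\alpha/2+10\delta_1^{1/3})|V_2|^2\le\big(\tfrac{\alpha}{2}(1-\gamma)^2+C\delta_1^{1/3}\big)n^2$ for an absolute constant $C$ (using $|V_2|\le(1-\gamma+\delta_1)n$). Adding these up,
\[
 d_M(v)-d_B(v)\le\Big(\tfrac{\gamma^2}{2}-\tfrac{\alpha}{2}+\tfrac{\alpha}{2}(1-\gamma)^2\Big)n^2+\big(\delta+2\delta_1+C\delta_1^{1/3}\big)n^2 .
\]
The bracketed main term vanishes: $\tfrac{\gamma^2}{2}-\tfrac{\alpha}{2}+\tfrac{\alpha}{2}(1-\gamma)^2=0$ is equivalent to $\gamma^2=\alpha\gamma(2-\gamma)$, which follows from $\alpha=2\gamma(1-\gamma)$ combined with the identity $2(1-\gamma)(2-\gamma)=1$, both immediate from~\eqref{equ:gamma-alpha-def}. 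Hence $d_M(v)-d_B(v)\le\big(\delta+2\delta_1+C\delta_1^{1/3}\big)n^2\le\delta_2 n^2$, since $\delta$ and $\delta_1$ are chosen sufficiently small compared to $\delta_2$. This completes the plan.

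The only real content, and the step I would regard as the main obstacle, is recognising that the within-$V_2$ degree bound is exactly what is needed: a vertex of $V_2$ could a priori miss many pairs of $\binom{V_1}{2}$ in its link while still having near-maximum total degree by being very dense inside $V_2$, and it is precisely the near-extremality of $\mathcal H[V_2]$ (via Proposition~\ref{PROP:C5minus-max-degree}~\ref{PROP:C5minus-max-degree-1}) that caps its $V_2$-degree at $\big(\tfrac{\alpha}{2}(1-\gamma)^2+o(1)\big)n^2$, which is just small enough — thanks to the exact cancellation above — to force the inequality. Everything else is routine arithmetic with the constants $\alpha$ and $\gamma$.
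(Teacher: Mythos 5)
Your proof is correct and follows essentially the same route as the paper: the same degree decompositions ($d_M(v)-d_B(v)=(|V_1|-1)|V_2|-d_{\mathcal H}(v)$ for $v\in V_1$, and $d_M(v)-d_B(v)=\binom{|V_1|}{2}+d_{\mathcal H[V_2]}(v)-d_{\mathcal H}(v)$ for $v\in V_2$), the same use of Proposition~\ref{PROP:C5minus-max-degree}~\ref{PROP:C5minus-max-degree-1} applied to $\mathcal H[V_2]$ via~\eqref{equ:vtx-stab-d}, and the same cancellation $\tfrac{\gamma^2}{2}+\tfrac{\alpha}{2}(1-\gamma)^2=\tfrac{\alpha}{2}$, which the paper leaves implicit and you verify explicitly.
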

    \begin{proof}[Proof of Claim~\ref{CLAIM:B-M-max-deg}]
        Suppose that $v\in V_1$. Then we have 
        \begin{align*}
            d_{\mathcal{H}}(v) 
            = d_{\mathbb{B}[V_1, V_2]}(v) + d_{B}(v) - d_{M}(v)
            = \left(|V_1| - 1\right)|V_2| + d_{B}(v) - d_{M}(v). 
        \end{align*}
        Combining it with~\eqref{equ:exact-min-max-deg} and~\eqref{equ:vtx-stab-a}, we obtain  
        \begin{align*}
            |d_{B}(v) - d_{M}(v)|
             = \left| d_{\mathcal{H}}(v) -  (x_1 n-1) x_2n \right|
            \le \delta_{2} n^2.  
        \end{align*}

        Now consider the case where $v \in V_2$.
        It follows from~\eqref{equ:vtx-stab-d} and Proposition~\ref{PROP:C5minus-max-degree}~\ref{PROP:C5minus-max-degree-1} that  
        \begin{align*}
            d_{\mathcal{H}[V_2]}(v)
            \le \Delta(\mathcal{H}[V_2])
            \le \left(\frac{\alpha}{2} + \frac{\delta_{2}}{2}\right)|V_2|^2. 
        \end{align*}
        Combining it with the identity
        \begin{align*}
            d_{\mathcal{H}}(v)
            & = d_{\mathbb{B}[V_1, V_2]}(v) + d_{B}(v) - d_{M}(v) + d_{\mathcal{H}[V_2]}(v) \\
            & = \binom{|V_1|}{2} + d_{B}(v) - d_{M}(v) + d_{\mathcal{H}[V_2]}(v), 
        \end{align*}
        we obtain 
        \begin{align*}
            d_{M}(v) - d_{B}(v)
            =  \binom{|V_1|}{2} + d_{\mathcal{H}[V_2]}(v) - d_{\mathcal{H}}(v)
            \le \binom{|V_1|}{2} + \left(\frac{\alpha}{2} + \frac{\delta_{2}}{2} \right)|V_2|^2  - d_{\mathcal{H}}(v).
        \end{align*}
        Combining this inequality with~\eqref{equ:exact-min-max-deg} and~\eqref{equ:vtx-stab-a}, we obtain $d_{M}(u) - d_{B}(v) \le \delta_2 n^2$. 
        This completes the proof of Claim~\ref{CLAIM:B-M-max-deg}. 
    \end{proof}

    We partition $B$ into two subfamilies as follows: 
    \begin{align*}
        B_{111}
        \coloneqq \left\{e\in B \colon e\subseteq V_1\right\} 
        \quad\text{and}\quad 
        B_{122}
        \coloneqq \left\{e\in B \colon |e\cap V_1| = 1\right\}, 
    \end{align*}
    A pair $(u,v) \in V_1 \times V_2$  \textbf{heavy} if $d_{B}(uv) \ge \delta_{4} n$. 
    We further partition $B_{122}$ into two subfamilies: 
    \begin{align*}
        B_{122}^{\heavy}
        \coloneqq \left\{e\in B_{122} \colon \text{$e$ contains at least one heavy pair}\right\} 
        \quad\text{and}\quad 
        B_{122}^{s} 
        \coloneqq B_{122} \setminus B_{122}^{\heavy}.
    \end{align*}
    The following fact follows directly from the definition: 
    \begin{align}\label{equ:max-codeg-B122}
        d_{B_{122}^{s}}(uv)\le \delta_{4} n,\quad\mbox{for every $uv\in V_1\times V_2$.} 
    \end{align}

    The two conclusions in the following claim follow directly from the $C_{4}^{3}$-freeness and $C_{5}^{3}$-freeness of $\mathcal{H}$, respectively. 
    \begin{claim}\label{CLAIM:bad-contribute-missing}
        The following statements hold. 
        \begin{enumerate}[label=(\roman*)]
        \item\label{it:bad-triple-B111} For every triple $abc \in B_{111}$ and every vertex $d \in V_2$, we have $\{abd, acd, bcd\} \cap M \neq \emptyset$.
        \item\label{it:bad-triple-B122} 
        Let $x_1x_2x_3 \in B_{122}$ be a bad triple with $x_2\in V_1$. Suppose that $\{x_4,x_5\} \subseteq V_1$ are two vertices satisfying $\{x_4x_5x_1, x_3x_4x_5\} \subseteq \mathcal{H}$. Then $\{x_2x_3x_4, x_5x_1x_2\} \cap M \neq\emptyset$.
    \end{enumerate}
    \end{claim}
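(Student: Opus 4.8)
The plan is to prove both parts by contradiction, in each case assembling a forbidden tight cycle from edges of $\mathcal{H}$. The one fact to keep in mind throughout is that, by the definitions \eqref{equ:def-bad-triple} and \eqref{equ:def-missing-triple}, any triple with exactly two vertices in $V_1$ and one vertex in $V_2$ is \emph{either} an edge of $\mathcal{H}$ \emph{or} a member of $M$ (these two possibilities partition all such triples), while every triple in $B$ (hence in $B_{111}$ or $B_{122}$) is by definition an edge of $\mathcal{H}$.

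For part~\ref{it:bad-triple-B111}, fix $abc\in B_{111}$, so $a,b,c\in V_1$ and $abc\in\mathcal{H}$, and fix $d\in V_2$; note $d\notin\{a,b,c\}$, so $abd$, $acd$, $bcd$ are genuine triples, each with two vertices in $V_1$ and one in $V_2$. If none of them lies in $M$, then all three are edges of $\mathcal{H}$, and together with $abc$ they witness that $\{a,b,c,d\}$ spans $K_4^3=C_4^3$ in $\mathcal{H}$, contradicting $C_4^3$-freeness; hence at least one of $abd,acd,bcd$ is in $M$. For part~\ref{it:bad-triple-B122}, write the bad triple as $x_1x_2x_3\in B_{122}$ with $x_2\in V_1$, so $x_1,x_3\in V_2$ and $x_1x_2x_3\in\mathcal{H}$, and recall we are given $x_4,x_5\in V_1$ with $x_4x_5x_1,x_3x_4x_5\in\mathcal{H}$. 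First I would record that the five vertices $x_1,\dots,x_5$ are pairwise distinct: membership in $V_1$ versus $V_2$ already forces $\{x_1,x_3\}\cap\{x_2,x_4,x_5\}=\emptyset$, the edge $x_1x_2x_3$ gives $x_1\neq x_3$, the edge $x_4x_5x_1$ gives $x_4\neq x_5$, and the only remaining possible coincidences, $x_2=x_4$ or $x_2=x_5$, are excluded by the hypothesis (they would make $x_2x_3x_4$ or $x_5x_1x_2$ degenerate, and the claim is applied only for admissible choices). Now suppose that neither $x_2x_3x_4$ nor $x_5x_1x_2$ lies in $M$. Each of these two triples has exactly two vertices in $V_1$ ($\{x_2,x_4\}$ and $\{x_2,x_5\}$ respectively) and one in $V_2$, so both are edges of $\mathcal{H}$. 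Then the five triples
\[
x_1x_2x_3,\qquad x_2x_3x_4,\qquad x_3x_4x_5,\qquad x_4x_5x_1,\qquad x_5x_1x_2
\]
all belong to $\mathcal{H}$ and form a tight $5$-cycle on $\{x_1,\dots,x_5\}$ with cyclic order $x_1x_2x_3x_4x_5$, contradicting $C_5^3$-freeness; hence $\{x_2x_3x_4,x_5x_1x_2\}\cap M\neq\emptyset$.

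The only real subtlety is the distinctness bookkeeping for the vertices involved (trivial in part~\ref{it:bad-triple-B111}, and in part~\ref{it:bad-triple-B122} reduced to checking $x_2\notin\{x_4,x_5\}$), which is needed so that the triples we assemble really are edges of a tight cycle rather than degenerate $2$-sets; once distinctness is in place, each part is an immediate appeal to $C_4^3$-freeness and $C_5^3$-freeness respectively, exactly as the two halves of the claim are phrased.
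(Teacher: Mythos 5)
Your proof is correct and is precisely the argument the paper intends: the paper gives no written proof beyond the remark that the two parts ``follow directly from the $C_{4}^{3}$-freeness and $C_{5}^{3}$-freeness of $\mathcal{H}$, respectively'', and your key observation that any triple with exactly two vertices in $V_1$ is either an edge of $\mathcal{H}$ or a member of $M$, so that the assumed non-membership in $M$ lets you assemble a $K_4^3$ on $\{a,b,c,d\}$ in part (i) and a tight $5$-cycle on $x_1x_2x_3x_4x_5$ in part (ii), is exactly that argument spelled out. Your distinctness bookkeeping (in particular reading the claim as applying only when $x_2\notin\{x_4,x_5\}$, so that the triples in the conclusion are genuine) is the correct interpretation and consistent with how the claim is invoked later in the paper.
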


    Next, we will compare the size of each of $B_{111}$, $B_{122}^{s}$, $B_{122}^{\heavy}$ with the size of~$M$. First, we establish the following upper bound for the maximum degree of $B$ and~$M$. 
    
    \begin{claim}\label{CLAIM:max-deg-bad-triples}
        We have $\max\left\{ \Delta(B),\,\Delta(M) \right\} \le \delta_{3} n^2$.  
    \end{claim}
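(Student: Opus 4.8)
The plan is to reduce the claim to an upper bound on $\Delta(B)$ alone. By Claim~\ref{CLAIM:B-M-max-deg}, $d_{M}(v)\le d_{B}(v)+\delta_{2}n^{2}$ for every $v\in V$, so it suffices to prove $\Delta(B)\le\delta_{3}n^{2}/2$; since $\delta_{2}\ll\delta_{3}$ this gives $\Delta(M)\le\Delta(B)+\delta_{2}n^{2}\le\delta_{3}n^{2}$. So suppose for contradiction that $d_{B}(v)>\delta_{3}n^{2}/2$ for some $v$. It helps to first record that, since $|B|,|M|\le\delta_{1}n^{3}$ by~\eqref{equ:vtx-stab-b} and $\delta_{1}\ll\delta_{3}$, almost everything is ``clean'': the set $W\subseteq V$ of vertices $w$ with $d_{B}(w)+d_{M}(w)>\delta_{1}^{1/3}n^{2}$ has $|W|\le 6\delta_{1}^{2/3}n$, and the set $P$ of pairs whose codegree in $B\cup M$ exceeds $\delta_{1}^{1/3}n$ has $|P|\le 6\delta_{1}^{2/3}n^{2}$; both $|W|n$ and $|P|$ are negligible against $\delta_{3}n^{2}$. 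A clean $w\in V_{1}$ has its cross-link $L_{12}(w)$ missing at most $\delta_{1}^{1/3}n^{2}$ pairs, and a clean $w\in V_{2}$ has $|L_{1}(w)|\ge\binom{|V_{1}|}{2}-\delta_{1}^{1/3}n^{2}$.

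The heart of the proof is a case analysis on the position of $v$ and, when $v\in V_{1}$, on whether $|L_{1}(v)|\ge\delta_{3}n^{2}/8$ or $|L_{2}(v)|\ge\delta_{3}n^{2}/8$ (one must hold, as $d_{B}(v)=|L_{1}(v)|+|L_{2}(v)|$); when $v\in V_{2}$ one uses $d_{B}(v)=|L_{12}(v)|$. In each case I would clean the relevant ``bad link'' of $v$ via Fact~\ref{FACT:min-deg-cleaning} to a subgraph of minimum degree $\Omega(\delta_{3}n)$, throw away the vertices of $W$ and the pairs of $P$, and then feed a surviving bad triple --- whose vertices other than $v$ are clean and whose relevant pairs lie outside $P$ --- into Claim~\ref{CLAIM:bad-contribute-missing}, which packages the $C_{4}^{3}$- and $C_{5}^{3}$-freeness of $\mathcal{H}$. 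For $v\in V_{1}$ with $|L_{1}(v)|$ large one tries to complete a bad triple $\{v,a,b\}\in B_{111}$ to a $C_{4}^{3}=K_{4}^{3}$ by a vertex $c\in V_{2}$ lying in the nearly complete cross-links of $a$ and $b$ with $\{a,b,c\}\in\mathcal{H}$ (using part~\ref{it:bad-triple-B111} of Claim~\ref{CLAIM:bad-contribute-missing} to see that such a $c$ exists unless too many triples are missing). For $v\in V_{1}$ with $|L_{2}(v)|$ large, and for $v\in V_{2}$, one instead builds the ``path plus two edges'' configuration of Claim~\ref{CLAIM:bad-contribute-missing}\ref{it:bad-triple-B122}: given a bad triple with two clean $V_{2}$-endpoints, one picks the auxiliary pair $\{x_{4},x_{5}\}\subseteq V_{1}$ inside the near-complete links $L_{1}$ of those two endpoints, producing a $C_{5}^{3}$. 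In all of these sub-arguments the contradiction is the appearance of a forbidden $C_{4}^{3}$ or $C_{5}^{3}$ in $\mathcal{H}$.

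The genuinely delicate point is that local maximality and~\eqref{equ:exact-min-max-deg} only give $d_{B}(v)=O(n^{2})$ for free --- e.g.\ for $v\in V_{1}$, combining~\eqref{equ:local-max-link-V1}, the identity $|L_{12}(v)|=(|V_{1}|-1)|V_{2}|-d_{M}(v)$, and Claim~\ref{CLAIM:B-M-max-deg} yields $2|L_{1}(v)|+|L_{2}(v)|\le(\alpha/2)n^{2}+O(\delta_{1}n^{2})$, hence $d_{B}(v)\le(\alpha/2)n^{2}+O(\delta_{1}n^{2})$ --- so the whole gap down to $\delta_{3}n^{2}$ has to be closed by the structural case analysis above, and it must run even when $d_{B}(v)$ is a fixed positive fraction of $n^{2}$. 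In that regime Claim~\ref{CLAIM:B-M-max-deg} forces $d_{M}(v)$ to be comparably large, and, worse, the pairs $\{v,a\}$ of large $M$-codegree are concentrated precisely on the vertices $a$ carrying most of the bad part of $L(v)$, so one cannot simply select the auxiliary vertex or vertices avoiding all heavy pairs at $v$. Handling this carefully --- choosing the bad triple and the auxiliary vertices so that the inevitable missing triples do not block the construction, and, if needed, exploiting the near-extremality $|\mathcal{H}[V_{2}]|\ge(\alpha/6-\delta_{1})|V_{2}|^{3}$ from~\eqref{equ:vtx-stab-d} or moving $v$ between the parts against local maximality --- is where the real work lies; everything else is routine bookkeeping with the constants $\delta_{1}\ll\delta_{2}\ll\delta_{3}$.
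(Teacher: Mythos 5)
Your reduction to bounding $\Delta(B)$ via Claim~\ref{CLAIM:B-M-max-deg} matches the paper, but after that there is a genuine gap, and it is exactly the one you flag yourself at the end: the argument you sketch does not close it. Your plan is to clean $L(v)$, pick a bad triple at $v$ whose other vertices are ``clean'', and build a forbidden $C_4^3$ or $C_5^3$ via Claim~\ref{CLAIM:bad-contribute-missing}. But cleanliness of the auxiliary vertices and pairs is controlled only by the global bounds $|B|,|M|\le\delta_1 n^3$, which say nothing about the pairs through $v$ itself: when $d_B(v)$ is a constant fraction of $n^2$, Claim~\ref{CLAIM:B-M-max-deg} (together with \eqref{equ:exact-min-max-deg}) forces $d_M(v)$ to be comparably large, and every pair $\{v,a\}$ with $a$ in the bad part of $L(v)$ may have $M$-codegree $\Theta(n)$. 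Concretely, for $vab\in B_{111}$ with $a,b$ clean, part~\ref{it:bad-triple-B111} of Claim~\ref{CLAIM:bad-contribute-missing} only says that for each $d\in V_2$ one of $vad,vbd,abd$ is missing; the triples $vad,vbd$ may account for almost all $d$, and summing the forced missing triples over all bad triples at $v$ yields only $|M|\gtrsim \delta_3 n^2$, far below the available budget $\delta_1 n^3$. So no contradiction arises, and the later claims that would give you near-complete common links (Claims~\ref{CLAIM:B122-vs-M}, \ref{CLAIM:max-codeg-B111}, \ref{CLAIM:heavy-pair-vs-M}) all presuppose the very bound $\Delta(M)\le\delta_3 n^2$ you are trying to prove. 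Your closing paragraph acknowledges that ``where the real work lies'' is unhandled; that is the whole content of the claim.

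The paper closes the gap by a different mechanism, which your proposal never invokes even though it was purpose-built for this step: Proposition~\ref{PROP:P4-free}. One sets $G:=L(v)$ for a vertex $v$ of maximum $B$-degree, and shows that $G$ contains at most $2\delta_1 n^4$ four-vertex paths of type-1 and type-2, because each such path in the link of $v$ forces (by $C_5^3$-freeness) a missing triple on its vertex set, and $|M|\le\delta_1 n^3$. Together with \eqref{equ:vtx-stab-a}, \eqref{equ:vtx-stab-d} and \eqref{equ:link-v-upper-lower-bound}, Proposition~\ref{PROP:P4-free} (whose proof rests on the optimisation in Proposition~\ref{PROP:inequality-fxy}) then says that the near-extremal graph $G$ is essentially unbalanced: either $|G[V_1]|+|G[V_2]|\le\delta_2 n^2$ or $|G[V_1,V_2]|\le\delta_2 n^2$. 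Local maximality (\eqref{equ:local-max-link-V1}, \eqref{equ:local-max-link-V2}) then pins down which part $v$ lies in, and in either case the small quantity is precisely $d_B(v)$, giving $\Delta(B)\le\delta_2 n^2\le\delta_3 n^2/2$. This global analysis of the link graph is what replaces, and cannot be replaced by, the local ``find a forbidden cycle around one bad triple'' construction in your sketch.
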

    \begin{proof}[Proof of Claim~\ref{CLAIM:max-deg-bad-triples}]
        By Claim~\ref{CLAIM:B-M-max-deg}, it suffices to show that $\Delta(B) \le \delta_{3} n^2/2$. 
         Fix a vertex $v \in V$ of maximum degree in $B$ and let $G \coloneqq L(v)$. 
         Note from~\eqref{equ:exact-min-max-deg} that 
         \begin{align}\label{equ:link-v-upper-lower-bound}
            \left(\frac{\alpha}{2} - \delta \right) n^2
             \le |G|
             \le \left(\frac{\alpha}{2} + \delta \right) n^2
         \end{align}
         First, we show that the number of both type-1 and type-2 four-vertex paths in $G$ (with respect to $(V_1, V_2)$) is at most $2\delta_1 n^4$.

         Suppose that $(a,b,c,d)$ is a $4$-vertex path of type-1 in $G$. 
         Then it follows from the $C_{5}^{3}$-freeness of $\mathcal{H}$ that $\{abd, acd\} \cap M \neq \emptyset$. 
         In other words, the set $\{a,b,c,d\}$ contains at least one missing triple from $M$. 
         If there are at least $2\delta_1 n^4$ copies of $4$-vertex path of type-1 in $G$, then the number of missing triples is at least $2\delta_1 n^4/n = 2\delta_1 n^3$, contradicting~\eqref{equ:vtx-stab-b}. 
         Therefore, the number of type-1 paths in $G$ is at most $2\delta_1 n^4$.

         Now suppose that $(a,b,c,d)$ is a $4$-vertex path of type-2 in $G$. Then it follows from the $C_{5}^{3}$-freeness of $\mathcal{H}$ that $\{adb, adc\} \cap M \neq \emptyset$. 
         In other words, the set $\{a,b,c,d\}$ contains at least one missing triple from $M$. 
         If there are at least $2\delta_1 n^4$ copies of $4$-vertex path of type-2 in $G$, then the number of missing triples is at least $2\delta_1 n^4/n = 2\delta_1 n^3$, contradicting~\eqref{equ:vtx-stab-b}.  
         Therefore, the number of type-2 paths in $G$ is at most $2\delta_1 n^4$. 

         Combining the conclusion above with~\eqref{equ:vtx-stab-a} and~\eqref{equ:vtx-stab-d}, we see that $G$ satisfies all three assumptions in Proposition~\ref{PROP:P4-free} with $\delta$ there corresponding to $2\delta_1$ here. Moreover, by~\eqref{equ:link-v-upper-lower-bound}, we have $|G| \ge \left(\frac{\alpha}{2} - \delta\right)n^2$. 
         Therefore, by Proposition~\ref{PROP:P4-free}, either $|G[V_1]| + |G[V_2]| \le \delta_2 n^2$ or $|G[V_1, V_2]| \le \delta_2 n^2$. 
        
         Suppose that $|G[V_1]| + |G[V_2]| \le \delta_2 n^2$. Then we have 
         \begin{align*}
             |G[V_1,V_2]|
             = |G| - |G[V_1]| - |G[V_2]| 
             \ge \left(\frac{\alpha}{2} - \delta - \delta_{2}\right)n^2
             > \delta_2 n^2. 
         \end{align*}
         This implies  that $v \in V_1$, since otherwise, it would contradict the  local maximality constraint~\eqref{equ:local-max-link-V2}. Therefore, $d_{B}(v) = |G[V_1]| + |G[V_2]| \le \delta_2 n^2 \le \delta_{3} n^2/2$, as desired.  

         Suppose that $|G[V_1, V_2]| \le \delta_2 n^2$. Then combining it with~\eqref{equ:vtx-stab-a}, we have 
         \begin{align*}
             |G[V_1]| 
             = |G| - |G[V_1, V_2]| - |G[V_2]| 
             \ge \left(\frac{\alpha}{2} - \delta\right)n^2 - \delta_{2} n^2 - \frac{(1-\gamma + \delta)^2n^2}{2}
             > \delta_2 n^2. 
         \end{align*}
         This implies that $v \in V_2$, since otherwise, it would contradict the  local maximality constraint~\eqref{equ:local-max-link-V1}. Therefore, $d_{B}(v) = |G[V_1, V_2]| \le \delta_2 n^2 \le \delta_{3} n^2/2$, also as desired.
         This completes the proof of Claim~\ref{CLAIM:B-M-max-deg}. 
    \end{proof}

    \begin{claim}\label{CLAIM:B122-vs-M}
        We have $|B_{122}^{s}| \le \delta_{5} |M|$. 
    \end{claim}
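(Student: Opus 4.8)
The plan is a double-counting argument linking ``witnesses'' of bad triples to missing triples. Fix a bad triple $e = \{u, w_1, w_2\} \in B_{122}^{s}$, where $u \in V_1$ is its unique vertex in $V_1$ and $w_1, w_2 \in V_2$. Call a pair $\{a,b\} \subseteq V_1 \setminus \{u\}$ a \emph{witness} for $e$ if $\{a,b,w_1\} \in \mathcal{H}$ and $\{a,b,w_2\} \in \mathcal{H}$, equivalently $\{a,b\} \in L_1(w_1) \cap L_1(w_2)$. Applying Claim~\ref{CLAIM:bad-contribute-missing}~\ref{it:bad-triple-B122} with $x_2 := u$, $\{x_1,x_3\} := \{w_1,w_2\}$, and $\{x_4,x_5\} := \{a,b\}$, for each witness at least one of the two triples obtained by adjoining $u$ and one vertex of $\{w_1,w_2\}$ to a single vertex of $\{a,b\}$ lies in $M$; fix one such triple and call it $m(e,\{a,b\}) \in M$. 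The aim is to prove that (i) every $e \in B_{122}^{s}$ has at least $\frac{\gamma^{2}}{4}n^{2}$ witnesses, and (ii) every $m \in M$ equals $m(e,\{a,b\})$ for at most $2\delta_{4}n^{2}$ pairs $(e,\{a,b\})$. Combining these gives $\frac{\gamma^{2}}{4}n^{2}\,|B_{122}^{s}| \le 2\delta_{4}n^{2}\,|M|$, hence $|B_{122}^{s}| \le \frac{8\delta_{4}}{\gamma^{2}}\,|M| < 20\,\delta_{4}\,|M| \le \delta_{5}\,|M|$ as $\delta_{4} \ll \delta_{5}$, which is the claim.

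To see (i), note that for any $w \in V_2$ a triple through $w$ with exactly two vertices in $V_1$ is either an edge of $\mathcal{H}$ or a missing triple, so $|L_1(w)| = \binom{|V_1|}{2} - d_M(w)$; by Claim~\ref{CLAIM:max-deg-bad-triples} we have $d_M(w) \le \delta_3 n^2$, and by~\eqref{equ:vtx-stab-a} we have $\frac{\gamma^{2}}{2}n^{2} - \delta_1 n^2 - n \le \binom{|V_1|}{2} \le \frac{\gamma^{2}}{2}n^{2} + \delta_1 n^2$. Since $L_1(w_1), L_1(w_2) \subseteq \binom{V_1}{2}$, inclusion--exclusion gives $|L_1(w_1) \cap L_1(w_2)| \ge |L_1(w_1)| + |L_1(w_2)| - \binom{|V_1|}{2} \ge \frac{\gamma^{2}}{2}n^{2} - 3\delta_1 n^2 - 2\delta_3 n^2 - 2n$; discarding the at most $n$ such pairs that meet $u$, and using $\delta_1, \delta_3 < 10^{-8}$ and $n$ large, this exceeds $\frac{\gamma^{2}}{4}n^{2}$, as needed. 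For (ii), write $m = \{p,q,r\}$ with $p,q \in V_1$ and $r$ the unique vertex of $m$ in $V_2$. The two possible forms of the triple produced in Claim~\ref{CLAIM:bad-contribute-missing}~\ref{it:bad-triple-B122} both consist of the $V_1$-vertex $u$ of $e$, one of the two $V_2$-vertices of $e$, and one witness vertex; hence from $m$ we get at most two choices for which of $p,q$ is $u$, and then the remaining $V_2$-vertex $w'$ of $e$ satisfies $\{u, r, w'\} \in B_{122}^{s}$ with $\{u, r\} \in V_1 \times V_2$, so $w'$ has at most $d_{B_{122}^{s}}(ur) \le \delta_4 n$ choices by~\eqref{equ:max-codeg-B122}; this determines $e$, and the other witness vertex lies in $V_1$, giving at most $n$ more choices. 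Altogether each $m \in M$ arises at most $2\delta_4 n^2$ times.

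The only point requiring care is the constant bookkeeping: step (i) must retain a fixed positive fraction of $n^2$ despite the $\delta_1,\delta_3$ and lower-order errors, and the factor $8\delta_4/\gamma^2$ lost in step (ii) must be absorbed into $\delta_5$; both are guaranteed by the hierarchy $\delta \ll \delta_1 \ll \cdots \ll \delta_5 \ll \varepsilon$ fixed at the start of the proof of Theorem~\ref{THM:K4C5-first-level-exact}. The combinatorial substance is entirely in Claims~\ref{CLAIM:bad-contribute-missing} and~\ref{CLAIM:max-deg-bad-triples} and inequality~\eqref{equ:max-codeg-B122}.
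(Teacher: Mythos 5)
Your proof is correct. It rests on the same ingredients as the paper's argument: Claim~\ref{CLAIM:bad-contribute-missing}~\ref{it:bad-triple-B122} to extract a missing triple from each bad triple together with a pair in the common link of its two $V_2$-vertices, Claim~\ref{CLAIM:max-deg-bad-triples} to show that this common link covers almost all of $\binom{V_1}{2}$, and the codegree bound~\eqref{equ:max-codeg-B122} to control multiplicities; the difference is in how the count is organised. The paper works locally: for each $e=abc\in B_{122}^{s}$ it takes a maximum matching in the common link (Fact~\ref{FACT:matching-number-lower-bound}), so the produced missing triples through $ab$ or $ac$ are distinct, deduces $\max\{d_M(ab),d_M(ac)\}\ge |V_1|/10$ by pigeonhole, charges $e$ to that heavier pair $\partial e$, and compares $d_{B_{122}^{s}}(\partial e)\le \delta_4 n$ with $d_M(\partial e)$. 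You instead double count (bad triple, witness pair) incidences globally: inclusion--exclusion of the two links gives at least $\gamma^2 n^2/4$ witnesses per bad triple with no matching needed, and the multiplicity of each $m\in M$ is bounded by $2\delta_4 n^2$ directly (two choices for which $V_1$-vertex of $m$ plays the role of $u$, at most $\delta_4 n$ choices for $e$ by~\eqref{equ:max-codeg-B122}, at most $n$ for the second witness vertex). This trades Fact~\ref{FACT:matching-number-lower-bound}, the pigeonhole step and the $\partial E$ assignment for a multiplicity bound on the $M$-side, at the cost of the slightly cruder constant $8\delta_4/\gamma^2<20\delta_4$, which the hierarchy $\delta_4\ll\delta_5$ absorbs just as it absorbs the paper's factor of roughly $10\delta_4/(\gamma-\delta_1)$. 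One detail you handle correctly and should keep explicit: restricting witnesses to $\binom{V_1\setminus\{u\}}{2}$ is what guarantees the five vertices fed into Claim~\ref{CLAIM:bad-contribute-missing}~\ref{it:bad-triple-B122} are distinct.
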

    \begin{proof}[Proof of Claim~\ref{CLAIM:B122-vs-M}]
        Let $abc \in B_{122}^{s}$ be a bad triple with $a\in V_1$ (and thus, $\{b,c\} \subseteq V_2$). Let $G$ denote the induced subgraph of the common link $L(b) \cap L(c)$ on $V_1$. Since, by Claim~\ref{CLAIM:max-deg-bad-triples}, $\max\{d_{M}(b), d_{M}(c) \} \le \delta_{3} n^2$, we have 
        \begin{align*}
            |G|
            \ge \binom{|V_1|}{2} - \left(d_{M}(b) + d_{M}(c)\right)
            \ge \binom{|V_1|}{2} - 2\delta_{3} n^2. 
        \end{align*}
        Let $\{d_1e_1, \ldots, d_te_t\}$ be a matching of maximum size in $G$. 
        It follows from Fact~\ref{FACT:matching-number-lower-bound} that 
        \begin{align*}
            t 
            \ge \frac{|G|}{2|V_1|} 
            \ge \frac{\binom{|V_1|}{2} - 2\delta_{3} n^2}{2|V_1|}
            \ge \frac{|V_1|}{5}. 
        \end{align*}
        Fix an arbitrary $i \in [t]$. Since $d_i e_i \in G \subseteq L(b) \cap L(c)$, we have $\{bd_ie_i, cd_ie_i\} \subseteq \mathcal{H}$. 
        It follows from Claim~\ref{CLAIM:bad-contribute-missing}~\ref{it:bad-triple-B122} that $\{abd_i, ace_i\} \cap M \neq \emptyset$. 
        This implies that $d_{M}(ab) + d_{M}(ac) \ge t \ge |V_1|/5$. 
        By the Pigeonhole Principle, we have $\max\left\{d_{M}(ab), d_{M}(ac) \right\} \ge |V_1|/10$. 

        For every bad triple $E \in B_{122}^{s}$, let $\partial E$ denote the pair inside $E$ with the larger missing degree among the two crossing pairs in $E$ (if there is a tie, break it arbitrarily).
        Let 
        \begin{align*}
            \partial B_{122}^{s} \coloneqq \left\{\partial E \colon E\in B_{122}^{s} \right\}\subseteq V_1\times V_2. 
        \end{align*}
        Note from the argument above that $d_{M}(\partial E) \ge |V_1|/10$ for every $E \in B_{122}^{s}$. 
        Combining it with~\eqref{equ:max-codeg-B122}, we obtain 
        \begin{align*}
            |B_{122}^{s}|
            \le \sum_{uv\in \partial B_{122}^{s}} d_{B_{122}^{s}}(uv) 
            \le \sum_{uv\in \partial B_{122}^{s}} \delta_4 n
            \le \frac{\delta_{4} n}{|V_1|/10} \sum_{uv\in \partial B_{122}^{s}} d_{M}(uv)
            \le \delta_{5} |M|. 
        \end{align*}
        This completes the proof of  Claim~\ref{CLAIM:B122-vs-M}. 
    \end{proof}
    
    \begin{claim}\label{CLAIM:max-codeg-B111}
        We have $\Delta_2(B_{111}) \le \delta_{4} n$ and $|B_{111}| \le \delta_{5} |M|$. 
    \end{claim}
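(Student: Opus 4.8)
The statement has two parts, and the plan is to prove the codegree bound $\Delta_2(B_{111})\le\delta_4 n$ first and then deduce $|B_{111}|\le\delta_5|M|$ from it, in a way entirely parallel to the proof of Claim~\ref{CLAIM:B122-vs-M}.

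For the codegree bound I would argue by contradiction. Suppose some pair $\{a,b\}\subseteq V_1$ has $W:=N_{\mathcal{H}}(ab)\cap V_1$ of size at least $\delta_4 n$. For any two distinct $w,w'\in W$ and any $d\in V_2$, the five (automatically distinct) vertices $w,a,b,w',d$ would span a tight $5$-cycle if the consecutive triples $abw$, $abw'$, $bw'd$, $ww'd$, $awd$ were all edges; the first two are edges by the choice of $W$, so the $C_5^3$-freeness of $\mathcal{H}$ forces at least one of $bw'd$, $ww'd$, $awd$ to be a non-edge, and since each of these triples has exactly two vertices in $V_1$, that one lies in $M$. Summing over $d\in V_2$ yields $d_M(bw')+d_M(ww')+d_M(aw)\ge|V_2|$ for every ordered pair of distinct $w,w'\in W$. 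Now I would sum this inequality over all such ordered pairs: the right-hand side contributes $|W|(|W|-1)|V_2|$, while on the left the $d_M(bw')$ and $d_M(aw)$ terms each contribute at most $(|W|-1)\cdot 2\Delta(M)\le(|W|-1)\cdot 2\delta_3 n^2$ (using $\sum_x d_M(ux)=2d_M(u)$ and Claim~\ref{CLAIM:max-deg-bad-triples}), and the $d_M(ww')$ terms contribute at most $2|M|\le 2\delta_1 n^3$ by~\eqref{equ:vtx-stab-b} (each missing triple has at most one pair inside $W$). Dividing through by $|W|-1$ and using $|W|\ge\delta_4 n$ together with $|V_2|\ge(1-\gamma-\delta_1)n>n/4$ from~\eqref{equ:vtx-stab-a}, one gets $\frac{\delta_4}{4}n^2\le 4\delta_3 n^2+\frac{4\delta_1}{\delta_4}n^2$, which contradicts the hierarchy $\delta_1\ll\delta_3\ll\delta_4$.

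For $|B_{111}|\le\delta_5|M|$, applying Claim~\ref{CLAIM:bad-contribute-missing}~\ref{it:bad-triple-B111} to a triple $abc\in B_{111}$ and every $d\in V_2$ gives $d_M(ab)+d_M(ac)+d_M(bc)\ge|V_2|$, so some pair $\partial(abc)\in\{ab,ac,bc\}$ (ties broken arbitrarily) has $d_M(\partial(abc))\ge|V_2|/3\ge n/12$. Let $\mathcal{P}:=\{\partial(abc)\colon abc\in B_{111}\}\subseteq\binom{V_1}{2}$. Since every missing triple has exactly one pair contained in $V_1$, we have $\sum_{uv\in\binom{V_1}{2}}d_M(uv)=|M|$, hence $|\mathcal{P}|\le 12|M|/n$. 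For a fixed pair $uv$, the number of $abc\in B_{111}$ with $\partial(abc)=uv$ is at most $d_{B_{111}}(uv)\le\Delta_2(B_{111})\le\delta_4 n$ by the first part, so $|B_{111}|\le|\mathcal{P}|\cdot\delta_4 n\le 12\delta_4|M|\le\delta_5|M|$ as $\delta_4\ll\delta_5$.

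The main obstacle is the codegree bound: one must choose a forbidden configuration whose ``violations'' are forced to land in $M$ rather than among the uncontrolled fully-$V_1$ triples, and the tight $5$-cycle through $a$, $b$, two common neighbours in $V_1$, and one vertex of $V_2$ does exactly that; once that configuration is in place, the double count and the reduction of the second part to the first are routine.
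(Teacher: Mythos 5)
Your proof is correct. The second half (deducing $|B_{111}|\le\delta_5|M|$ from the codegree bound via Claim~\ref{CLAIM:bad-contribute-missing}~\ref{it:bad-triple-B111}, the pigeonhole choice of $\partial E$, and the identity $\sum_{uv\in\binom{V_1}{2}}d_M(uv)=|M|$) is essentially identical to the paper's argument. For the codegree bound $\Delta_2(B_{111})\le\delta_4 n$ you take a genuinely different and somewhat more elementary route: the paper fixes the pair $\{a,b\}$, a set $N\subseteq N_{B_{111}}(ab)$ of size $\delta_4 n$ and a vertex $c\in V_2$, and uses Fact~\ref{FACT:matching-number-lower-bound} together with $d_M(c)\le\delta_3 n^2$ to extract a matching $\{d_ie_i\}$ of size at least $|N|/5$ inside the link $L(c)$ restricted to $N$; in the resulting tight $5$-cycle the triple $cd_ie_i$ is then a guaranteed edge, so only the two cross-triples $acd_i$, $bce_i$ can land in $M$, and summing over $c\in V_2$ contradicts $\Delta(M)\le\delta_3 n^2$ alone. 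You instead sum over all ordered pairs $w,w'\in W$ and all $d\in V_2$ using the cycle $(w,a,b,w',d)$, which lets the triple $ww'd$ be one of the possibly missing ones, and you absorb its total contribution through the global bound $|M|\le\delta_1 n^3$ from~\eqref{equ:vtx-stab-b}. This trades the matching trick for the weak-stability bound on $|M|$ (plus the mild extra hierarchy requirement $\delta_1\ll\delta_4^2$, which the paper's choice of constants permits); both contradictions ultimately rest on Claim~\ref{CLAIM:max-deg-bad-triples}, and your version is arguably a little simpler since it avoids Fact~\ref{FACT:matching-number-lower-bound} altogether.
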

    \begin{proof}[Proof of Claim~\ref{CLAIM:max-codeg-B111}]
        First, we prove that $\Delta_2(B_{111}) \le \delta_{4} n$. 
        Recall that $B_{111}$ is a $3$-graph on $V_1$. 
        Suppose to the contrary that there exists a pair $\{a,b\} \subseteq V_1$ such that $d_{B_{111}}(ab) \ge \delta_{4} n$. 
        Then fix a subset $N \subseteq N_{B_{111}}(ab) \subseteq N_{\mathcal{H}}(ab) \cap V_1$ of size $\delta_{4} n$. 
        
        Fix an arbitrary vertex $c \in V_2$, let $G$ denote the induced subgraph of $L(c)$ on $N$. Let $\{d_1e_1, \ldots, d_te_t\}$ be a matching of maximum size in $G$.  
        Note from Claim~\ref{CLAIM:max-deg-bad-triples} that 
        \begin{align*}
            |G| 
            \ge \binom{|N|}{2} - d_{M}(c)
            \ge \binom{|N|}{2} - \delta_{3} n^2. 
        \end{align*}
        Hence, it follows from Fact~\ref{FACT:matching-number-lower-bound} that 
        \begin{align*}
            t 
            \ge \frac{|G|}{2|N|}
            \ge \frac{\binom{|N|}{2} - \delta_{3} n^2}{2|N|}
            \ge \frac{|N|}{5}. 
        \end{align*}
        Fix an arbitrary $i \in [t]$, noting from the definition of $N$ and $G$ that $\{abd_i, abe_i, cd_ie_i\} \subseteq \mathcal{H}$. 
        So it follows from the $C_{5}^{3}$-freeness of $\mathcal{H}$ that $\{acd_i, bce_i\} \cap M \neq \emptyset$. Thus, we have 
        \begin{align*}
            d_{M}(ac) + d_{M}(bc) 
            \ge t 
            \ge \frac{|N|}{5}. 
        \end{align*}
        Summing over all $c \in V_2$, we obtain 
        \begin{align*}
            d_{M}(a) + d_{M}(b)
            \ge \sum_{c\in V_2} \left(d_{M}(ac) + d_{M}(bc)\right)
            \ge \frac{|V_2||N|}{5}. 
        \end{align*}
        By symmetry, we may assume that 
        \begin{align*}
            d_{M}(a)
            \ge \frac{d_{M}(a) + d_{M}(b)}{2}
            \ge \frac{1}{2} \cdot \frac{|V_2||N|}{5}
            \ge \frac{(1-\gamma-\delta_{1})n \cdot \delta_{4} n}{10}
            > \delta_{3} n^2.
        \end{align*}
        However, this is a contradiction to Claim~\ref{CLAIM:max-deg-bad-triples}. This proves that $\Delta_{2}(B_{111}) \le \delta_{4} n$. 

        Now we prove that $|B_{111}| \le \delta_{5} |M|$. 
        For every edge $E = abc \in B_{111}$,  denote by $\partial E$ the pair in $E$ with the largest missing degree among the three pairs in $E$ (if there is a tie, break it arbitrarily). Let 
        \begin{align*}
            \partial B_{111} 
            \coloneqq \left\{\partial E \colon E \in B_{111}\right\}. 
        \end{align*}

        For every $abc\in B_{111}$, it holds by Claim~\ref{CLAIM:bad-contribute-missing}~\ref{it:bad-triple-B111} that   $\{abd, acd, bcd\} \cap M \neq \emptyset$ for every vertex $d \in V_2$; this implies that there are at least $|V_2|$ missing triples that share two vertices with~$abc$. Hence, it follows from the Pigeonhole Principle that $d_{M}(\partial E) \ge |V_2|/3$. 
        Combining it with the conclusion $\Delta_2(B_{111}) \le \delta_{4} n$ established above, we obtain 
        \begin{align*}
            |B_{111}|
            \le \sum_{uv \in \partial B_{111}} d_{B_{111}}(uv) 
            \le \frac{\Delta_{2}(B_{111})}{|V_2|/3} \sum_{uv \in \partial B_{111}} d_{M}(uv) 
            \le \frac{\delta_{4} n}{(1-\gamma - \delta_1)/3} |M|
            \le \delta_{5} |M|, 
        \end{align*}
        which completes the proof of Claim~\ref{CLAIM:max-codeg-B111}.
    \end{proof}

    Next, we consider $B_{122}^{\heavy}$. 
    Recall that a pair $(a,b) \in V_1 \times V_2$ was defined to be heavy if $d_{B}(ab) \ge \delta_{4} n$.
    
    \begin{claim}\label{CLAIM:heavy-pair-vs-M}
        Suppose that $(a,b) \in V_1 \times V_2$ is a heavy pair. Then $d_{M}(ab) \ge |V_1| - \delta_{4} n$.
    \end{claim}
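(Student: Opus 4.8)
The plan is to prove the equivalent bound $|N_{\mathcal{H}}(ab)\cap V_1|\le \delta_4 n$; since a triple $\{a,b,u\}$ with $a\in V_1$, $b\in V_2$ lies in $M$ exactly when $u\in V_1$ and $\{a,b,u\}\notin\mathcal{H}$, this yields $d_M(ab)=(|V_1|-1)-|N_{\mathcal{H}}(ab)\cap V_1|\ge |V_1|-\delta_4 n$. Write $W:=N_{\mathcal{H}}(ab)\cap V_2$ and $U:=N_{\mathcal{H}}(ab)\cap V_1$. Since $a\in V_1$ and $b\in V_2$, a triple $\{a,b,w\}\in\mathcal{H}$ is a bad edge precisely when $w\in V_2$, so the hypothesis that $(a,b)$ is heavy says exactly that $|W|=d_B(ab)\ge \delta_4 n$. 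I would argue by contradiction, assuming $|U|\ge \delta_4 n/2$.

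The heart of the argument is the following local consequence of the $C_{5}^{3}$-freeness of $\mathcal{H}$: if $u,u'\in U$, $w\in W$, $\{b,u,u'\}\in\mathcal{H}$ and $\{a,u',w\}\in\mathcal{H}$, then $\{u,u',w\}\notin\mathcal{H}$, and hence $\{u,u',w\}\in M$ (as $u,u'\in V_1$ and $w\in V_2$). Indeed, take the cyclic order $(b,a,w,u',u)$ on these five vertices; its consecutive triples are $\{a,b,w\}$, $\{a,w,u'\}$, $\{u,u',w\}$, $\{b,u,u'\}$, $\{a,b,u\}$, and all of them except possibly $\{u,u',w\}$ are edges of $\mathcal{H}$ by the hypotheses (recall $w\in W$ and $u\in U$ give $\{a,b,w\},\{a,b,u\}\in\mathcal{H}$), so if $\{u,u',w\}$ were also an edge we would obtain a copy of $C_{5}^{3}$ in $\mathcal{H}$. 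The point of routing the cycle through the link of $b$ is that the triple we can ``blame'' has two vertices in $V_1$, hence is counted by $M$.

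With this in hand I would double count against $\Delta(M)\le\delta_3 n^2$ (Claim~\ref{CLAIM:max-deg-bad-triples}). Let $G_0$ be the graph on $U$ with $\{u,u'\}\in G_0$ iff $\{b,u,u'\}\in\mathcal{H}$; a non-edge of $G_0$ is an $M$-triple through $b$, so $|G_0|\ge\binom{|U|}{2}-d_M(b)\ge\binom{|U|}{2}-\delta_3 n^2$. For $w\in W$ set $S_w:=\{u\in U:\{a,u,w\}\notin\mathcal{H}\}$; by the local step, every edge of $G_0$ with an endpoint outside $S_w$ yields a triple $\{u,u',w\}\in M$, and for fixed $w$ these triples are pairwise distinct, so $d_M(w)\ge|G_0|-\binom{|S_w|}{2}$. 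On the other hand the triples $\{a,u,w\}$ with $u\in S_w$, $w\in W$ are distinct $M$-triples through $a$, so $\sum_{w\in W}|S_w|\le d_M(a)\le\delta_3 n^2$; picking $w^{\ast}\in W$ with $|S_{w^{\ast}}|\le\delta_3 n^2/|W|\le\delta_3 n/\delta_4$ and using $|U|\ge\delta_4 n/2$ together with $\delta_3\ll\delta_4^2$ gives $d_M(w^{\ast})\ge\binom{|U|}{2}-\delta_3 n^2-\binom{\delta_3 n/\delta_4}{2}>\delta_3 n^2$, contradicting Claim~\ref{CLAIM:max-deg-bad-triples}. Hence $|U|<\delta_4 n/2$, and the claim follows.

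The only genuinely delicate point is the local step: one must produce a near-copy of $C_{5}^{3}$ whose single missing edge can be taken to have exactly two vertices in $V_1$, so that it is registered by $M$; passing to the link of $b$, which by Claim~\ref{CLAIM:max-deg-bad-triples} is an almost complete graph on $V_1$, is what supplies the extra edge $\{b,u,u'\}$ needed to close the cycle. Everything after that is bookkeeping with the degree bound on $M$.
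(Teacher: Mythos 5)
Your proof is correct. Both your argument and the paper's run the same overall play: assume for contradiction that $N_{\mathcal{H}}(ab)\cap V_1$ is large, use the heaviness of $(a,b)$ to get many coneighbours in $V_2$, build $5$-vertex configurations through the pair $\{a,b\}$ whose $C_5^3$-freeness forces missing triples, and contradict the bound $\Delta(M)\le\delta_3 n^2$ of Claim~\ref{CLAIM:max-deg-bad-triples}. The execution differs, though. The paper fixes $c\in N_2$, takes a large \emph{matching} in the common link $(L(b)\cap L(c))[N_1]$ (via Fact~\ref{FACT:matching-number-lower-bound}), and from each matching edge $d_ie_i$ deduces $ace_i\in M$ from the cycle $(a,b,d_i,e_i,c)$; summing over $c$ blows up $d_{M}(a)$. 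You instead use the cycle $(b,a,w,u',u)$, whose forced missing triple $\{u,u',w\}$ avoids both $a$ and $b$, and you concentrate all of these at a single well-chosen $w^{\ast}\in W$, blowing up $d_M(w^{\ast})$; the averaging step $\sum_{w\in W}|S_w|\le d_M(a)\le\delta_3 n^2$ to pick $w^{\ast}$, together with the near-completeness of $L(b)$ on $U$, replaces the matching argument and lets you count all edges of $G_0$ rather than a matching. Both routes are comparably short; yours trades the matching lemma for one extra averaging step and needs $\delta_3\ll\delta_4^2$, which is available in the paper's constant hierarchy. The one point to state explicitly when writing it up is the distinctness of the five vertices in the cycle $(b,a,w,u',u)$, which you have since $u,u'\in V_1\setminus\{a\}$, $w\in V_2\setminus\{b\}$, and $u\neq u'$.
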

    \begin{proof}[Proof of Claim~\ref{CLAIM:heavy-pair-vs-M}]
        Fix a heavy pair $(a,b) \in V_1 \times V_2$ and let $N_{i} \coloneqq N_{\mathcal{H}}(ab) \cap V_i$ for $i \in [2]$. 
        It follows from the definition of heavy that $|N_2| \ge \delta_{4} n$.
        Suppose to the contrary that $|N_1| \ge \delta_{4} n$. 
        
        Fix an arbitrary vertex $c \in N_2$. Let $G$ denote the induced subgraph of the common link $L(b) \cap L(c)$ on $N_1$. Let $\{d_1e_1, \ldots, d_te_t\}$ be a matching of maximum size in $G$.  
        Since, by Claim~\ref{CLAIM:max-deg-bad-triples}, $\max\{d_{M}(b), d_{M}(c) \} \le \delta_{3} n^2$, we have 
        \begin{align*}
            |G|
            \ge \binom{|N_1|}{2} - \left(d_{M}(b) + d_{M}(c)\right)
            \ge \binom{|N_1|}{2} - 2 \delta_{3} n^2. 
        \end{align*}
        It follows from Fact~\ref{FACT:matching-number-lower-bound} that 
        \begin{align*}
            t 
            \ge \frac{|G|}{2|N_1|}
            \ge \frac{\binom{|N_1|}{2} - 2 \delta_{3} n^2}{2|N_1|}
            \ge \frac{|N_1|}{5}. 
        \end{align*}
        Fix an arbitrary $i \in [t]$. 
        Since $\{abc, abd_i, bd_ie_i, cd_ie_i\} \subseteq \mathcal{H}$, it follows from the $C_{5}^{3}$-freeness of $\mathcal{H}$ that $ace_i \in M$. This implies that $d_{M}(ac) \ge t \ge |N_1|/5$.
        Summing over all $c \in N_2$, we obtain 
        \begin{align*}
            d_{M}(a) 
            \ge \sum_{c\in N_2} d_{M}(ac) 
            \ge \frac{|N_2||N_1|}{5}
            \ge \frac{\delta_{4}n \cdot \delta_{4}n}{5}
            > \delta_{3} n^2, 
        \end{align*}
        contradicting Claim~\ref{CLAIM:max-deg-bad-triples}. 
        This proves Claim~\ref{CLAIM:heavy-pair-vs-M}. 
    \end{proof}
    
    Define two subfamilies of $M$ as follows:
    \begin{align*}
        M_{1}
        & \coloneqq \left\{e\in M \colon \text{$e$ contains exactly one heavy pair}\right\} 
        \quad\text{and}\quad  \\
        M_{2}
        & \coloneqq \left\{e\in M \colon \text{$e$ contains exactly two heavy pairs}\right\}. 
    \end{align*}

    \begin{claim}\label{CLAIM:max-deg-heavy-pair}
        Every vertex is contained in at most $\delta_{4} n$ heavy pairs. Consequently, $d_{M_2}(ab) < \delta_{4}n$ for every heavy pair $(a,b) \in V_1 \times V_2$. 
    \end{claim}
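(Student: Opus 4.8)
The plan is to argue by contradiction: if some vertex $v$ lies in more than $\delta_{4} n$ heavy pairs, then its missing-degree $d_{M}(v)$ is forced to exceed $\delta_{3} n^{2}$, contradicting the bound $\Delta(M)\le \delta_{3} n^{2}$ from Claim~\ref{CLAIM:max-deg-bad-triples}. The engine is Claim~\ref{CLAIM:heavy-pair-vs-M}, which says that every heavy pair $(a,b)\in V_1\times V_2$ already satisfies $d_{M}(ab)\ge |V_1|-\delta_{4} n\ge n/2$, so a vertex sitting in many heavy pairs accumulates many missing triples.

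Concretely, I would split on whether $v\in V_1$ or $v\in V_2$. If $v\in V_1$, every heavy pair through $v$ has the form $(v,b)$ with $b\in V_2$; since a triple of $M$ containing $v$ has exactly one vertex in $V_2$, the missing neighbourhoods $N_{M}(vb)\subseteq V_1\setminus\{v\}$ for distinct $b\in V_2$ partition the missing triples through $v$, so $d_{M}(v)=\sum_{b\in V_2}d_{M}(vb)$. Keeping only those $b$ for which $(v,b)$ is heavy and applying Claim~\ref{CLAIM:heavy-pair-vs-M}, if $v$ were in more than $\delta_{4} n$ heavy pairs we would get $d_{M}(v)>\delta_{4} n\,(|V_1|-\delta_{4} n)>\delta_{4}(\gamma-\delta_{1}-\delta_{4})\,n^{2}$, which exceeds $\delta_{3} n^{2}$ because $\gamma>1/2$ and $\delta_{3}\ll\delta_{4}$ --- a contradiction. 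If $v\in V_2$, heavy pairs through $v$ have the form $(a,v)$ with $a\in V_1$, and now a triple of $M$ containing $v$ meets $V_1$ in two vertices, so $\sum_{a\in V_1}d_{M}(av)$ counts each such triple twice and $d_{M}(v)=\tfrac12\sum_{a\in V_1}d_{M}(av)\ge\tfrac12\,\delta_{4} n\,(|V_1|-\delta_{4} n)$; the same numerical contradiction follows, now with a harmless extra factor $\tfrac12$. This establishes that every vertex lies in at most $\delta_{4} n$ heavy pairs.

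For the ``consequently'' statement, fix a heavy pair $(a,b)\in V_1\times V_2$. A triple of $M_2$ containing $\{a,b\}$ must be of the form $\{a,a',b\}$ with $a'\in V_1$ and both $(a,b)$ and $(a',b)$ heavy (the third pair $\{a,a'\}$ lies inside $V_1$ and hence is never heavy); since $b$ is in at most $\delta_{4} n$ heavy pairs, one of which is $(a,b)$ itself, there are fewer than $\delta_{4} n$ admissible choices of $a'$, so $d_{M_2}(ab)<\delta_{4} n$. I do not foresee a genuine obstacle here: the only points needing care are the part-bookkeeping that makes $d_{M}(v)=\sum d_{M}(v\cdot)$ with the correct multiplicity ($1$ when $v\in V_1$, $\tfrac12$ when $v\in V_2$), and checking that the constant hierarchy $\delta\ll\delta_{1}\ll\cdots\ll\delta_{5}$ together with $\gamma>1/2$ leaves room for the contradiction --- both of which are routine.
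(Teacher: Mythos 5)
Your proposal is correct and follows essentially the same route as the paper: derive the first part by contradiction, summing the missing codegrees of the heavy pairs through the vertex via Claim~\ref{CLAIM:heavy-pair-vs-M} to force $d_{M}(v)>\delta_{3}n^{2}$, contradicting Claim~\ref{CLAIM:max-deg-bad-triples}, and then deduce $d_{M_2}(ab)<\delta_4 n$ because each triple of $M_2$ through $\{a,b\}$ yields a second heavy pair at $b$. The only cosmetic difference is that you split the cases $v\in V_1$ and $v\in V_2$ to get the exact multiplicities $1$ and $\tfrac12$, whereas the paper uses the uniform factor $\tfrac12$; both suffice.
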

    \begin{proof}[Proof of Claim~\ref{CLAIM:max-deg-heavy-pair}]
        Suppose  that we can pick $\delta_{4} n$ heavy pairs containing some vertex $a$, say pairs $(a,b_i)$ or $(b_i,a)$ for $ i \in [\delta_{4} n]$.
        Then it follows from Claim~\ref{CLAIM:heavy-pair-vs-M} that 
        \begin{align*}
            d_{M}(a)
            \ge \frac{1}{2} \sum_{i \in [\delta_{4} n]} d_{M}(ab_i)
            \ge \frac{1}{2}\cdot \delta_{4} n \cdot \left(|V_1| - \delta_{4} n \right)
            \ge \frac{1}{2}\cdot \delta_{4} n \cdot \left((\gamma - \delta_{1})n - \delta_{4} n \right)
            > \delta_{3} n^2. 
        \end{align*}
        This contradicts Claim~\ref{CLAIM:max-deg-bad-triples}, proving the first part of the lemma.

        Let $(a,b) \in V_1 \times V_2$ be a heavy pair. Observe that every missing triple in $M_{2}$ that contains the pair $\{a,b\}$ also contributes another heavy pair that contains $b$. 
        So it follows from the conclusion above that $d_{M_2}(ab)< \delta_{4} n$. 
        This completes the proof of Claim~\ref{CLAIM:max-deg-heavy-pair}. 
    \end{proof}

    \begin{claim}\label{CLAIM:M1-vs-M2}
        We have $|M_2| \le \delta_{4} |M_1|$. 
    \end{claim}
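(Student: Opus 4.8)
The plan is to prove Claim~\ref{CLAIM:M1-vs-M2} by a short double-counting argument over heavy pairs, feeding in the two codegree estimates from Claims~\ref{CLAIM:heavy-pair-vs-M} and~\ref{CLAIM:max-deg-heavy-pair}. First, if there are no heavy pairs at all then $M_1 = M_2 = \emptyset$ and the claim is trivial, so I may assume that the set $\mathcal{P} \subseteq V_1 \times V_2$ of heavy pairs is non-empty and write $h \coloneqq |\mathcal{P}| \ge 1$.

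Next I would record the elementary structural observation that every missing triple $e \in M$ has the form $\{a_1, a_2, b\}$ with $\{a_1, a_2\} \subseteq V_1$ and $b \in V_2$, so the only pairs of $e$ that can possibly be heavy are the two crossing pairs $(a_1, b)$ and $(a_2, b)$. Hence a triple of $M_2$ contains exactly two heavy pairs and a triple of $M_1$ exactly one, which yields the identities
\begin{align*}
    2|M_2| = \sum_{(a,b)\in\mathcal{P}} d_{M_2}(ab)
    \qquad\text{and}\qquad
    |M_1| = \sum_{(a,b)\in\mathcal{P}} d_{M_1}(ab).
\end{align*}
Moreover, for a fixed heavy pair $(a,b)$ every missing triple through $\{a,b\}$ has its third vertex in $V_1$ and therefore lies in $M_1 \cup M_2$, so that $d_{M_1}(ab) = d_M(ab) - d_{M_2}(ab)$.

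The two inputs I would then substitute are: Claim~\ref{CLAIM:max-deg-heavy-pair}, giving $d_{M_2}(ab) < \delta_4 n$ for every heavy pair and hence $|M_2| < h\delta_4 n / 2$; and Claim~\ref{CLAIM:heavy-pair-vs-M}, giving $d_M(ab) \ge |V_1| - \delta_4 n$ for every heavy pair, which together with the previous bound yields $d_{M_1}(ab) > |V_1| - 2\delta_4 n$ and hence $|M_1| > h(|V_1| - 2\delta_4 n) > 0$. Dividing the two estimates,
\begin{align*}
    \frac{|M_2|}{|M_1|}
    < \frac{\delta_4 n / 2}{|V_1| - 2\delta_4 n}
    \le \frac{\delta_4 n / 2}{(\gamma - \delta_1 - 2\delta_4)\, n}
    \le \delta_4,
\end{align*}
where the middle inequality uses $|V_1| \ge (\gamma - \delta_1)n$ from~\eqref{equ:vtx-stab-a}, and the last inequality uses that $\gamma = (3-\sqrt 3)/2 > 1/2$ together with $\delta_1 \ll \delta_4 \ll 1$, so that $\gamma - \delta_1 - 2\delta_4 \ge 1/2$.

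I do not expect any real obstacle in this claim; the only points needing care are the bookkeeping that a missing triple through a heavy $V_1 \times V_2$ pair has its third vertex in $V_1$ (so that the missing triples containing no heavy pair never interfere and the identity $d_{M_1}(ab) = d_M(ab) - d_{M_2}(ab)$ is exact), and the harmless numerical check that $\gamma$ is bounded away from $1/2$, which is precisely what upgrades the estimate from $O(\delta_4)\,|M_1|$ to $\delta_4 |M_1|$.
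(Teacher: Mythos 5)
Your proof is correct and is essentially the paper's argument: both rest on the same two inputs (Claim~\ref{CLAIM:heavy-pair-vs-M} giving $d_M(ab)\ge |V_1|-\delta_4 n$ and Claim~\ref{CLAIM:max-deg-heavy-pair} giving $d_{M_2}(ab)<\delta_4 n$ for heavy pairs) and the observation that the pair of a missing triple inside $V_1$ can never be heavy. The only difference is presentational — the paper routes the double count through an auxiliary bipartite graph $Q$ between $M_1$ and $M_2$ and compares extremal degrees $x/y$, whereas you sum $d_{M_1}$ and $d_{M_2}$ directly over heavy pairs — and the resulting bound $\delta_4 n/\bigl(2|V_1|-4\delta_4 n\bigr)\le\delta_4$ is identical.
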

    \begin{proof}[Proof of Claim~\ref{CLAIM:M1-vs-M2}]
        Define an auxiliary bipartite graph $Q[M_1, M_2]$ whose vertex set is $M_1 \cup M_2$ and a pair of triples $(e, e') \in M_1 \times M_2$ is adjacent in $Q$ iff $e$ and $e'$ share one heavy pair. 
        Let $x \coloneqq \max\left\{d_{Q}(E) \colon E\in M_1\right\}$ and $y \coloneqq \min\left\{d_{Q}(E) \colon E\in M_2\right\}$. 
        
        Let $abc \in M_1$ be a missing triple of degree exactly $x$ in $Q$. Suppose that $b\in V_2$ and $ab$ is a heavy pair. 
        By the definition of $Q$, every missing triple in $N_{Q}(abc) \subseteq M_{2}$ must contain the pair $\{a,b\}$, so by Claim~\ref{CLAIM:max-deg-heavy-pair},
        \begin{align}\label{equ:Q-M1-max-deg}
            x = |N_{Q}(abc)| = d_{M_{2}}(ab) \le \delta_{4} n. 
        \end{align}
        Now let $abc \in M_{2}$ be a missing triple of degree exactly $y$ in $Q$. Suppose that $b\in V_2$ (hence, $\{a,b\}$ and $\{c,b\}$ are heavy pairs). 
        Since, by Claim~\ref{CLAIM:heavy-pair-vs-M}, $\min\left\{d_{M}(ab),\,d_{M}(cb)\right\} \ge |V_1| - \delta_{4} n$, and by Claim~\ref{CLAIM:max-deg-heavy-pair}, $\max\left\{d_{M_2}(ab),\,d_{M_2}(cb)\right\} \le\delta_{4} n$, it follows that  
        \begin{align*}
            \min\left\{d_{M_{1}}(ab),\,d_{M_{1}}(cb)\right\}
            = \min\left\{d_{M}(ab) - d_{M_2}(ab),\,d_{M}(cb)-d_{M_2}(cb)\right\}
            \ge |V_1| - 2 \delta_{4} n.
        \end{align*}
        Therefore, 
        \begin{align*}
            y
            = d_{Q}(abc) 
            = d_{M_{1}}(ab) + d_{M_{1}}(cb)
            \ge 2|V_1| - 4 \delta_{4} n. 
        \end{align*}
        Combining it with~\eqref{equ:Q-M1-max-deg}, we obtain 
        \begin{align*}
            \frac{|M_2|}{|M_1|}
            \le \frac{x}{y}
            \le \frac{\delta_{4} n}{2|V_1| - 4 \delta_{4} n}
            \le \frac{\delta_{4} n}{2(\gamma - \delta_{1})n - 4 \delta_{4} n}
            \le \delta_{4}, 
        \end{align*}    
        which proves Claim~\ref{CLAIM:M1-vs-M2}. 
    \end{proof}

    Let $H\subseteq V_1\times V_2$ denote the collection of all heavy pairs. 
    It follows from Claims~\ref{CLAIM:heavy-pair-vs-M} and~\ref{CLAIM:M1-vs-M2} that 
    \begin{align*}
        |B_{122}^{\heavy}|
        \le \sum_{uv \in H} d_{B_{122}^{\heavy}}(uv)
        \le \sum_{uv \in H} |V_2|
        & \le \frac{|V_2|}{|V_1| - \delta_{4} n} \cdot \sum_{uv \in H} d_{M}(uv)  \\
        & = \frac{|V_2|}{|V_1| - \delta_{4} n} \cdot \left(|M_1| + 2|M_2|\right)  \\
        & \le \frac{(1-\gamma +\delta_{1})n}{(\gamma-\delta_{1})n - \delta_{4} n} \cdot (1+2\delta_{4})|M| 
        \le \frac{3}{5}|M|. 
    \end{align*}
    Combining it with Claims~\ref{CLAIM:B122-vs-M} and~\ref{CLAIM:max-codeg-B111}, we obtain  
    \begin{align*}
        |B|
        = |B_{111}| + |B_{122}^{s}| + |B_{122}^{\heavy}|
        \le \delta_{5} |M| + \delta_{5} |M| + \frac{3}{5}|M| 
        \le \frac{4}{5} |M|. 
    \end{align*}
    Observe that the $3$-graph $\mathcal{H}' \coloneqq (\mathcal{H} \cup M) \setminus B$ is also $\{C_{4}^{3}, C_{5}^{3}\}$-free, and has size 
    \begin{align*}
        |\mathcal{H}'|
        = |\mathcal{H}| - |B| + |M| 
        \ge |\mathcal{H}| + \frac{|M|}{5}
        = \mathrm{ex}(n, \{C_{4}^{3}, C_{5}^{3}\}) + \frac{|M|}{5}.  
    \end{align*}
    It follows from the maximality of $\mathcal{H}$ that $|M| =  0$ (and hence, $|B| = 0$ as well), which completes the proof of Theorem~\ref{THM:K4C5-first-level-exact}.  
\end{proof}

\section{Concluding remarks}\label{se:concluding}

We could not determine the Tur\'an density of the tight $5$-cycle alone. When one tries to prove a version of Proposition~\ref{pr:3}, not only is the SDP problem considerably larger (having $|\C F_6|=40,547$ constraints) but also the numerical results returned by computer suggest that the SDP is not strong enough to prove that $|B|\le |M|+o(n^3)$ (even when the coefficient of $|M|$ is exactly 1). So it seems that some new ideas are needed here.

While the stability property of Theorem~\ref{THM:C5K4-stability} gives a good starting point for proving a version of~\eqref{eq:O(1)} with error term $O(n^2)$ when forbidding $C_\ell^3$ for given $l\ge 7$ not divisible by 3, any proof would need to overcome some new technical difficulties (such that a vertex may have large bad degree). So we decided against pursuing this direction (also since the bounds in~\eqref{eq:O(1)} for $\{C_4^3, C_5^3\}$ are much more precise).

\section*{Acknowledgements}
Levente Bodn\'ar, Xizhi Liu and Oleg Pikhurko were supported by ERC Advanced Grant 101020255.
\bibliography{C5K4}
\end{document}